\newtheorem{theorem}{Theorem}[section]
\newtheorem{lemma}[theorem]{Lemma}
\newtheorem{corollary}[theorem]{Corollary}
\theoremstyle{definition}
\newtheorem{remark}[theorem]{Remark}
\numberwithin{equation}{section}
\begin{document}
\setcounter{page}{1}

\vspace*{2.0cm}
\title[Some Hardy and Rellich type inequalities for affine connections]
{Some Hardy and Rellich type inequalities for affine connections}
\author[P. Wang, H. Chang]{Pengyan Wang$^1$, Huiting Chang$^{1,*}$ }
\maketitle
\vspace*{-0.6cm}

\begin{center}
{\footnotesize

$^1$School of Mathematics and Statistics, Xinyang Normal University, Xinyang, 464000, P.R. China

}\end{center}

\vskip 4mm {\footnotesize \noindent {\bf Abstract.}
In this article we study various forms of the Hardy inequality for affine connections on a complete noncompact Riemannian manifold, including the two-weight Hardy inequality, the improved Hardy inequality, the Rellich inequality, the Hardy-Poincar\'{e} inequality and the Heisenberg-Pauli-Weyl inequality. Our results improve and include many previously known results as special cases.

\noindent {\bf Keywords.}
affine connection; affine Laplacian; Hardy type inequality; complete manifold}.

\noindent {\bf 2020 Mathematics Subject Classification.}
26D10, 58J05, 53B05.

\renewcommand{\thefootnote}{}
\footnotetext{ $^*$Corresponding author.
\par
E-mail addresses: wangpy@xynu.edu.cn (P. Wang), changhuiting163@163.com (H. Chang).
}

\section{Introduction}

The classical Hardy inequality plays significant roles in many areas such as analysis, probability and partial differential equations:
$$\aligned
\int_{\mathbb{R}^n}\left|\nabla u(x)\right|^2dx\geq\left(\frac{n-2}{2}\right)^2\int_{\mathbb{R}^n}\frac{\left|u(x)\right|^2}{\left|x\right|^2}dx,
\endaligned$$
where $u\in C_0^\infty(\mathbb{R}^n\backslash\{0\})$ and $n\geq3$. A natural generalization of the Hardy inequality is the following Rellich inequality:
$$\aligned
\int_{\mathbb{R}^n}|\Delta u(x)|^2dx\geq\left(\frac{n(n-4)}4\right)^2\int_{\mathbb{R}^n}\frac{|u(x)|^2}{|x|^4}dx{\color{red}{,}}
\endaligned$$
where $u\in C_0^\infty(\mathbb{R}^n\backslash\{0\})$ and $n\geq5$.
The constant $\left(\frac{n-2}2\right)^2$ and $\left(\frac{n(n-4)}4\right)^2$ in the above inequalities are optimal and are never achieved by nontrivial functions.

In this article, we are concerned with various forms of the Hardy inequality for affine connections on a complete noncompact Riemannian manifold.

Let us provide the rigorous definitions we are going to employ for affine connections on Riemannian
manifolds (see \cite{LiXia2017}).

Let $(M, g)$ be an $n$-dimensional Riemannian manifold and $\nabla$ be the Levi-Civita
connection of $ g$. Let $V=e^{u}$, where $u$ is a smooth function defined on $M$. We call $(M, g, V)$ a $\textit{Riemannian triple}$.
For two real constants $\lambda$, $\mu$ and two vector fields $X$, $Y$ on $M$, we define a $2$-parameter family of \textit{affine connections} by
\begin{equation}\label{aInt1}
D^{\lambda,\mu}_{X}Y:=\nabla_{X}Y+\lambda \mathrm{d}u(X)Y+\lambda \mathrm{d}u(Y)X+\mu g(X,Y)\nabla u.
\end{equation}
As Li and Xia \cite{LiXia2017} have already noted, when $\lambda = \mu= 0$, $D^{\lambda,\mu}$ is a
Levi-Civita connection for $ g$; when $\lambda = -\mu$, $D^{\lambda,\mu}$ is a Levi-Civita connection for the conformal metric $e^{2\lambda f} g$.
Except for $\lambda = \mu= 0$ and $\lambda = -\mu$, $D^{\lambda,\mu}$ may not be a
Levi-Civita connection for any Riemannian metric.
The definitions of the affine Ricci curvature, the affine gradient, the affine Laplacian and the affine divergence will be given in Sect. 2 below.
For more details on affine connections, we refer to \cite{HuangM2023,Huangma2023} and the references
therein.

Now, let's recall some relevant work about the Hardy type inequalities for affine connections \eqref{aInt1}.
In the case $\lambda=\mu=0$, the affine connections \eqref{aInt1} reduces to the Levi-Civita connection. The affine gradient and the affine Laplacian are in consistence with the classical ones.
Carron \cite{Carron1997} first established the following Hardy inequality on Riemannian manifold:
$$\aligned
\int_M\rho^\alpha|\nabla u|^2d\nu_g\geq\frac{(C+\alpha-1)^2}4\int_M\rho^{\alpha-2}u^2d\nu_g,
\endaligned$$
where $\alpha\in\mathbb{R}, C+\alpha-1>0, u\in C_0^\infty\left(M\setminus\rho^{-1}\left\{0\right\}\right)$.
The weight function $\rho$ is nonnegative and it satisfies $|\nabla\rho|=1$
and $\Delta\rho\geq\frac C\rho$ in the sense of distribution. Throughout this paper, $d\nu_{g},\nabla$ and $\Delta$ denote the volume element, the gradient and the Laplace operators on $M$. Under the same geometric assumptions on the weight function $\rho$, Kombe and \"{O}zaydin \cite{Kombe2009} established the weighted $L^p$-Hardy inequality:
$$\aligned
\int_M\rho^\alpha\left|\nabla u\right|^pd\nu_g\geq\left(\frac{C+1+\alpha-p}p\right)^p\int_M\rho^{\alpha-p}u^p d\nu_g,
\endaligned$$
where $1\leq p<\infty,C+1+\alpha-p>0$ and $u\in C_0^\infty\left(M\setminus\rho^{-1}\left\{0\right\}\right)$. Kombe and \"{O}zaydin \cite{Kombe2013} also proved a new weighted Hardy-Poincar\'{e} inequalities. They showed that if $M$ is a complete non-compact Riemannian manifold of dimension $n>1$ and $\rho$ is a nonnegative function on M such that $|\nabla\rho|=1$ and $\Delta\rho\geq\frac C\rho$ in the sense of distribution, where $C>0$, the following inequality holds:
\begin{equation}\label{1adaInt2}\aligned
\int_M\rho^{\alpha+p}|\nabla\rho\cdot\nabla u|^pd\nu_g\geq\left(\frac{C+1+\alpha}p\right)^p\int_M\rho^\alpha|u|^pd\nu_g.
\endaligned\end{equation}
Kombe and Yener \cite{Kombe2016} provided a new form of the $L^p$-Hardy inequality involving two weight functions, that is,
$$\aligned
\int_{M}a(x)|\nabla u|^{p}~dv_{g}\geqslant\int_{M}b(x)|u|^{p}~dv_{g}+c(p)\int_{M}a(x)|\omega|^{p}\left|\nabla\left(\frac{u}{\omega}\right)\right|^{p}~dv_{g}
\endaligned$$
for suitable functions $a, b$ and $\omega$.
Xia \cite{Xia2014} proved the following Hardy type inequalities on a complete noncompact Riemannian manifold:
$$\aligned
\int_{M}\rho^{\alpha}|\nabla u|^{p}\geq\left(\frac{p-C-1-\alpha}{p}\right)^{p}
\int_{M}\rho^{\alpha-p}|u|^{p}-\left(\frac{p-C-1}{p}\right)^{p-1}
\int_{M}V\rho^{\alpha-p+1}|u|^{p}.
\endaligned$$
for any $p,\alpha\in\mathbb{R}$ with $p>1$ and $p-\alpha-C-1>0$. For more results on Hardy type inequalities on Riemannian manifolds, we refer to \cite{Flynn2023,Grillo2003,Ghoussoub2011,Ye2023,Jin2022,Kombe2010,Vazquez2000,Nguyen2020} and the references therein.

In the case $\lambda = -\mu$, the affine connections \eqref{aInt1} reduces to a Levi-Civita connection for the conformal metric $e^{2\alpha f} g$.
The affine gradient and the affine Laplacian are in consistence with the classical ones for the conformal metric $e^{2\alpha f} g$ (also known as the weighted Laplacian, for example, see \cite{HuangL2014,HuangZh2023,Li2005,Wangz2023}). Du and Mao \cite{Du2015} proved some Hardy and Rellich type inequalities for the weighted Laplacian. Li et al. \cite{Li2022} established some Hardy type inequalities with
remainder terms for the weighted Laplacian. While in \cite{Meng2021}, by means of various weighted Ricci curvatures, Meng et al. established several sharp Hardy type inequalities for the weighted Laplacian on closed weighted Riemannian manifolds.

As for the affine connections \eqref{aInt1}, to our knowledge, the only known
Hardy type inequality is due to Li and Xia \cite{LiXia2017}. They showed that, under some additional conditions on the affine Ricci curvature, the $L^{2}$-version of the Hardy type inequality holds on compact Riemannian manifolds (Theorem 5.1 in \cite{LiXia2017}).

Based on the above fact, a natural problem is for us how to generalize those Hardy type inequalities described above to the setting of complete Riemannian
manifold with affine connections \eqref{aInt1} and without any constraint on the affine Ricci curvature.

The motivation of this article is to answer the above problems.
The primary objectives of the present paper are twofold: Firstly, we want to establish
some $L^{2}$ improved Hardy type inequalities and $L^{p}$ Hardy type inequalities for affine connections on a complete Riemannian manifold.
Secondly, as application we would like to show some Rellich type inequalities, Hardy-Poincar\'{e} type inequalities and Heisenberg-Pauli-Weyl type inequalities for affine connections \eqref{aInt1}.

The rest of this paper is organized as follows. In Section 2, we recall the notations and the integration by parts under
affine connections. In Section 3, we establish $L^{2}$ two-weight Hardy type inequality
and $L^{p}$ Hardy type inequality for affine connections. Weighted Rellich type
inequalities for affine connections are discussed in Section 4. Finally, Section 5 is dedicated to the study of some inequalities of other types for affine connections.

\section{Preliminaries}

In this section, we recall some preliminary results in Li and Xia \cite{LiXia2017}.

\subsection{Torsion-free affine connections $D^{\lambda,\mu}$}

For two real constants $\lambda$, $\mu$ and two vector fields $X$, $Y$ on $M$, a $2$-parameter family of affine connections $D^{\lambda,\mu}$ defined by
\begin{equation}\label{01adsec02}
D^{\lambda,\mu}_{X}Y:=\nabla_{X}Y+\lambda \mathrm{d}u(X)Y+\lambda \mathrm{d}u(Y)X+\mu g(X,Y)\nabla u,
\end{equation}
where $u$ is a smooth function defined on $M$. One can check that $D^{\lambda,\mu}_{X}Y$ is \textit{torsion-free}.

\subsection{Affine gradient, affine Laplacian, affine divergence and affine Ricci curvature.}
Let $\phi$ be a smooth function on $M^{n}$.
For an $n$-dimensional smooth Riemannian triple $(M^{n}, g, V=e^{u})$.

(1) The affine gradient on $M$ is defined by
\begin{equation}\label{ad01Sec1}
\aligned
\nabla^{D^{\lambda,\mu}}\phi:=V^{\mu-\lambda}\nabla\phi.
\endaligned
\end{equation}

(2) The affine Laplacian $\Delta^{D^{\lambda,\mu}}\phi$ on $M$ is defined by
\begin{equation}\label{ad02Sec1}
\aligned
\Delta^{D^{\lambda,\mu}}\phi:=V^{\mu-\lambda}[\Delta\phi+(2\mu+n\lambda)\langle\nabla u, \nabla\phi\rangle].
\endaligned
\end{equation}

(3) Let $X=\nabla^{D^{\lambda,\mu}}\phi$. The affine divergence $\mathrm{div}^{D^{\lambda,\mu}}X$ on $M$ is defined by
\begin{equation}\label{ad01ad02Sec1}
\aligned
\mathrm{div}^{D^{\lambda,\mu}}X:= D^{\lambda,\mu}_{i}X^{i}= V^{\mu-\lambda}(\Delta\phi+(2\mu+n\lambda)\langle\nabla u,\nabla\phi\rangle)= \Delta^{D^{\lambda,\mu}}\phi.
\endaligned
\end{equation}

(4) The affine Ricci curvature on $M$ is defined by:
\begin{equation}\label{aInt2}\aligned
{\rm Ric}^{D^{\lambda,\mu}}:=&{\rm Ric}-[(n-1)\lambda+\mu]\nabla^{2}u+\left[(n-1)\lambda^{2}-\mu^{2}\right]\mathrm{d}u\otimes\mathrm{d}u\\
&+\left[\mu\Delta u+(\mu^{2}+(n-1)\lambda\mu)|\nabla u|^{2}\right]g.
\endaligned\end{equation}

(5) The integration by parts with respect to affine connections as follows:
\begin{equation}\label{ad03Sec1}
\aligned
\int_{M}V^{\tau}\omega\Delta^{D^{\lambda,\mu}}\phi ~dv_{g}= -\int_{M}\langle\nabla \omega,\nabla^{D^{\lambda,\mu}}\phi\rangle V^{\tau}~dv_{g}= \int_{M}V^{\tau}\phi\Delta^{D^{\lambda,\mu}}\omega ~dv_{g}
\endaligned
\end{equation}
where $\omega$ and $\phi$ are smooth functions on $M$.

\begin{lemma}\label{lem2-2}(\cite{LiXia2017})
Let $W$ be any smooth vector field on $M$. Then
\begin{equation}\label{z3lem1}
V^{\tau}D^{\lambda,\mu}_{i}W^{i}=\nabla_{i}(V^{\tau}W^{i})
\end{equation}
is a divergent form with respect to the Riemannian volume form $dv_{g}$, where $\tau=(n+1)\lambda+\mu$ and we adopt the Einstein convention.
\end{lemma}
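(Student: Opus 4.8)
The plan is to verify \eqref{z3lem1} by computing both sides directly in a local orthonormal frame. First I would fix a point $p\in M$, choose a local orthonormal frame $\{e_i\}_{i=1}^{n}$ that is geodesic at $p$, and write $W^i=g(W,e_i)$. Applying the defining formula \eqref{01adsec02} with $X=e_i$ and $Y=W$, then taking the $e_i$-component and summing over $i$, I would obtain
\[
D^{\lambda,\mu}_{i}W^{i}=\sum_{i}\langle\nabla_{e_i}W,e_i\rangle+\lambda\sum_{i}\mathrm{d}u(e_i)\langle W,e_i\rangle+\lambda\,\mathrm{d}u(W)\sum_{i}\langle e_i,e_i\rangle+\mu\sum_{i}g(e_i,W)\langle\nabla u,e_i\rangle.
\]
Using $\sum_i\langle\nabla_{e_i}W,e_i\rangle=\mathrm{div}\,W$, $\sum_i\mathrm{d}u(e_i)\langle W,e_i\rangle=\langle\nabla u,W\rangle=\mathrm{d}u(W)$, $\sum_i\langle e_i,e_i\rangle=n$, and $\sum_i g(e_i,W)\langle\nabla u,e_i\rangle=\mathrm{d}u(W)$, this collapses to
\[
D^{\lambda,\mu}_{i}W^{i}=\mathrm{div}\,W+\big[(n+1)\lambda+\mu\big]\,\mathrm{d}u(W)=\mathrm{div}\,W+\tau\,\mathrm{d}u(W).
\]

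Next I would expand the right-hand side. Since $V=e^{u}$ we have $\nabla V^{\tau}=\tau V^{\tau}\nabla u$, so the Leibniz rule for the Riemannian divergence gives
\[
\nabla_{i}(V^{\tau}W^{i})=\mathrm{div}(V^{\tau}W)=V^{\tau}\,\mathrm{div}\,W+\langle\nabla V^{\tau},W\rangle=V^{\tau}\big(\mathrm{div}\,W+\tau\,\mathrm{d}u(W)\big).
\]
Comparing the last two displays gives $V^{\tau}D^{\lambda,\mu}_{i}W^{i}=\nabla_{i}(V^{\tau}W^{i})$, and the right-hand side is by construction the Riemannian divergence of the vector field $V^{\tau}W$; in particular $\int_{M}V^{\tau}D^{\lambda,\mu}_{i}W^{i}\,dv_{g}=0$ whenever $W$ has compact support, which is the meaning of ``divergent form with respect to $dv_{g}$'' and is precisely the divergence-form input behind the integration-by-parts identity \eqref{ad03Sec1}.

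The computation is routine; the only point requiring a little care is the bookkeeping of the three terms linear in $\mathrm{d}u$ in \eqref{01adsec02}, and in particular the fact that tracing $X\mapsto\lambda\,\mathrm{d}u(Y)X$ over the frame produces the factor $n\lambda$ (rather than $\lambda$), so that the three coefficients add up precisely to $\tau=(n+1)\lambda+\mu$. Once $\tau$ is correctly identified, it automatically matches the power of $V$ produced by differentiating $V^{\tau}=e^{\tau u}$, and the asserted identity drops out. No curvature hypothesis and no global assumption on $M$ enters the argument.
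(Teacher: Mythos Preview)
Your argument is correct. The paper itself does not supply a proof of this lemma; it is simply quoted from \cite{LiXia2017}, so there is no in-paper proof to compare against. Your direct verification via the Christoffel-symbol/trace computation is exactly the natural route: tracing the three correction terms in \eqref{01adsec02} yields the coefficients $\lambda$, $n\lambda$, and $\mu$, whose sum $\tau=(n+1)\lambda+\mu$ then matches the logarithmic derivative of $V^{\tau}=e^{\tau u}$, giving the identity. The only cosmetic point is that the trace $D_i W^i$ is really the contraction $(D_j W)^i$ at $i=j$ in coordinates (equivalently, $\tilde\Gamma^{i}_{ik}=\Gamma^{i}_{ik}+\tau\,u_k$), which does not require an orthonormal frame or metric-compatibility of $D$; your orthonormal-frame presentation arrives at the same formula but implicitly uses $g$ to lower the upper index, which is harmless here since the frame is $g$-orthonormal.
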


\section{Hardy type inequalities and their proofs}
In this section, we will prove some integral inequalities of the Hardy type for affine connections $D^{\lambda,\mu}$.
Our first result is an $L^{2}$-version of the affine Hardy inequality without remainder term given as follows.

\begin{theorem}\label{thm1}
Let $(M^{n}, g,V=e^{u})$ be an n-dimensional Riemannian triple and
$\lambda,\mu\in \mathbb{R}$. Let $D=D^{\lambda,\mu}$ be the affine connection defined as in \eqref{01adsec02} and $\tau=(n+1)\lambda+\mu$.
Let $\rho$ be a nonnegative function on $M$ such that $|\nabla^{D}\rho|=1$ in the sense of distributions. Then for
any compactly supported smooth function $\phi\in C^{\infty}_{0} (M\backslash\rho^{-1}\{0\})$, $1<p<+\infty$, we have

$\mathrm{(i)}$\quad When $\Delta^{D}\rho\leq \frac{C}{\rho}V^{\lambda-\mu}$ in the sense of distributions, where $C>0$ is a constant and $1-\alpha-C>0$, the
following inequality
 \begin{equation}\label{01Sec2thm1}\aligned
\int_{M}\rho^{\alpha}|\nabla^{D}\phi|^{2}V^{\tau+\lambda-\mu}dv_{g}\geq
\left(\frac{1-\alpha-C}{2}\right)^{2}\int_{M}\rho^{\alpha-2}\phi^{2}V^{\tau+\lambda-\mu}dv_{g}
\endaligned\end{equation}
holds.

$\mathrm{(ii)}$\quad When $\Delta^{D}\rho\geq \frac{C}{\rho}V^{\lambda-\mu}$ in the sense of distributions, where $C>0$ is a constant and $C+\alpha-1>0$, the
following inequality
 \begin{equation}\label{02Sec2thm1}\aligned
\int_{M}\rho^{\alpha}|\nabla^{D}\phi|^{2}V^{\tau+\lambda-\mu}dv_{g}\geq
\left(\frac{\alpha+C-1}{2}\right)^{2}\int_{M}\rho^{\alpha-2}\phi^{2}V^{\tau+\lambda-\mu}dv_{g}
\endaligned\end{equation}
holds.
\end{theorem}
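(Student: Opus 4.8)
The plan is to adapt the classical vector-field (or ``Picone-type'') method to the affine setting, exploiting the integration-by-parts formula \eqref{ad03Sec1} together with the divergence identity \eqref{z3lem1}. The key observation is that $|\nabla^{D}\rho|=1$ means $|\nabla\rho|^{2}V^{2(\mu-\lambda)}=1$ by \eqref{ad01Sec1}, so $|\nabla\rho|^{2}=V^{2(\lambda-\mu)}$; this lets me convert every expression involving the affine gradient of $\rho$ back into an expression with the ordinary gradient and an explicit power of $V$. Then the natural test vector field to plug into the affine divergence theorem is
\[
X=\rho^{\alpha-1}\phi^{2}\,\nabla^{D}\rho = \rho^{\alpha-1}\phi^{2}V^{\mu-\lambda}\nabla\rho ,
\]
and I would compute $\int_{M}V^{\tau}\,\mathrm{div}^{D}(X)\,dv_{g}=0$ using Lemma~\ref{lem2-2} (the left side is $\int_{M}\nabla_{i}(V^{\tau}X^{i})\,dv_{g}$, which vanishes by compact support and the assumption $\phi\in C_{0}^{\infty}(M\setminus\rho^{-1}\{0\})$).

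The second step is to expand $\mathrm{div}^{D}(X)$ using the product rule for the affine divergence. Writing it out, $\mathrm{div}^{D}(\rho^{\alpha-1}\phi^{2}\nabla^{D}\rho)$ produces three groups of terms: one with $\nabla^{D}\rho$ hitting the scalar factor and giving $(\alpha-1)\rho^{\alpha-2}\phi^{2}|\nabla^{D}\rho|^{2}=(\alpha-1)\rho^{\alpha-2}\phi^{2}$; one with the derivative landing on $\phi^{2}$, giving $2\rho^{\alpha-1}\phi\,\langle\nabla^{D}\phi,\nabla^{D}\rho\rangle$ up to the appropriate $V$-weight; and one with $\rho^{\alpha-1}\phi^{2}\,\Delta^{D}\rho$, where I invoke the hypothesis $\Delta^{D}\rho\gtrless \tfrac{C}{\rho}V^{\lambda-\mu}$. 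After multiplying by $V^{\tau}$ and integrating, the $\Delta^{D}\rho$ term contributes $\pm C\int_{M}\rho^{\alpha-2}\phi^{2}V^{\tau+\lambda-\mu}\,dv_{g}$ (the sign and direction of the inequality depending on whether we are in case (i) or (ii)), so that the vanishing of the integral becomes an inequality of the shape
\[
(\alpha-1+C)\int_{M}\rho^{\alpha-2}\phi^{2}V^{\tau+\lambda-\mu}\,dv_{g}\;\le\; -2\int_{M}\rho^{\alpha-1}\phi\,\langle\nabla^{D}\phi,\nabla^{D}\rho\rangle\,V^{\tau+\lambda-\mu}\,dv_{g}
\]
in case (ii), and the mirror-image inequality with $(1-\alpha-C)$ on the left in case (i), where I keep the correct overall sign so the left coefficient is positive under the stated hypothesis.

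The final step is Cauchy--Schwarz on the right-hand cross term: bound
\[
\Bigl|\,2\rho^{\alpha-1}\phi\,\langle\nabla^{D}\phi,\nabla^{D}\rho\rangle\,\Bigr|\;\le\; 2\,\rho^{(\alpha-2)/2}|\phi|\cdot\rho^{\alpha/2}|\nabla^{D}\phi|\,|\nabla^{D}\rho|
\;\le\;\varepsilon\,\rho^{\alpha-2}\phi^{2}+\tfrac{1}{\varepsilon}\rho^{\alpha}|\nabla^{D}\phi|^{2},
\]
using $|\nabla^{D}\rho|=1$, with all integrands carrying the common weight $V^{\tau+\lambda-\mu}$. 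Substituting and rearranging gives
\[
(\alpha-1+C-\varepsilon)\int_{M}\rho^{\alpha-2}\phi^{2}V^{\tau+\lambda-\mu}\,dv_{g}\;\le\;\tfrac{1}{\varepsilon}\int_{M}\rho^{\alpha}|\nabla^{D}\phi|^{2}V^{\tau+\lambda-\mu}\,dv_{g},
\]
and optimizing over $\varepsilon$ — the choice $\varepsilon=\tfrac{\alpha+C-1}{2}$ makes $\varepsilon(\alpha-1+C-\varepsilon)=\bigl(\tfrac{\alpha+C-1}{2}\bigr)^{2}$ — yields \eqref{02Sec2thm1}; the symmetric computation with the reversed inequality and $\varepsilon=\tfrac{1-\alpha-C}{2}$ yields \eqref{01Sec2thm1}. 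I expect the main obstacle to be purely bookkeeping: correctly tracking the $V$-powers through the affine product rule for $\mathrm{div}^{D}$ and confirming that the weight $V^{\tau+\lambda-\mu}$ is exactly the one that survives so that Lemma~\ref{lem2-2} applies with $\tau=(n+1)\lambda+\mu$ — in particular verifying that $\langle\nabla^{D}\phi,\nabla^{D}\rho\rangle=V^{2(\mu-\lambda)}\langle\nabla\phi,\nabla\rho\rangle$ combines with the $V^{\tau}$ from the divergence theorem and the $V^{\mu-\lambda}$ inside $X$ to give precisely $V^{\tau+\lambda-\mu}$ after the dust settles. The hypothesis that $\Delta^{D}\rho$ is bounded only in the distributional sense means the integration by parts should be justified by a standard approximation/density argument, which I would mention but not belabor. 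Note also that $p$ plays no role in the $L^2$ statement as written, so the restriction $1<p<+\infty$ is vacuous here and can be ignored in the proof of this theorem.
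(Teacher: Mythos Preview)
Your approach is correct and is a genuinely different route from the paper's. The paper proves Theorem~\ref{thm1} via the ground-state substitution $\phi=\rho^{\gamma}\psi$ with $\gamma=\tfrac{1-\alpha-C}{2}$: expanding $\rho^{\alpha}|\nabla\phi|^{2}$ produces a nonnegative term $\rho^{\alpha+2\gamma}|\nabla\psi|^{2}$ (discarded), the main term $\gamma^{2}\rho^{\alpha-2}\phi^{2}$, and a cross term that after integration by parts becomes $-\tfrac{\gamma}{\alpha+2\gamma}\int\psi^{2}V^{\tau}\Delta^{D}(\rho^{\alpha+2\gamma})$, which is signed because $\alpha+2\gamma-1+C=0$; the case $C=1$ (so $\alpha+2\gamma=0$) is handled separately with $\ln\rho$. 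Your divergence/vector-field method with $X=\rho^{\alpha-1}\phi^{2}\nabla^{D}\rho$ followed by Young's inequality and optimization in $\varepsilon$ is cleaner here: it avoids the $C=1$ case split entirely and gets straight to the sharp constant. What the paper's substitution buys is the explicit remainder $\int\rho^{1-C}|\nabla(\rho^{-\gamma}\phi)|^{2}V^{\tau+\lambda-\mu}$, which is the seed of the two-weight and improved Hardy inequalities developed in Theorem~\ref{thm4} and Theorem~\ref{thm11}; your Cauchy--Schwarz step throws this remainder away. Both proofs are standard for Hardy-type inequalities; your $V$-bookkeeping is right (the product rule $\mathrm{div}^{D}(fW)=f\,\mathrm{div}^{D}W+\langle\nabla f,W\rangle$ together with $\langle\nabla\rho,\nabla^{D}\rho\rangle=V^{\lambda-\mu}$ gives exactly the $V^{\tau+\lambda-\mu}$ weight after multiplying by $V^{\tau}$), and your remark that $p$ is irrelevant to this $L^{2}$ statement is correct.
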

\begin{proof}

$\mathrm{(i)}$\quad Let $\gamma=\frac{1-\alpha-C}{2}$. Clearly, $\gamma>0$. By direct computation, we obtain
\begin{equation}\label{03Sec2thm1}\aligned
\Delta^{D}\left(\rho^{\alpha+2\gamma}\right)=&V^{\mu-\lambda}\left(\Delta\rho^{\alpha+2\gamma}+(2\mu+n\lambda)\langle\nabla u,\nabla \rho^{\alpha+2\gamma}\rangle\right)\\
=&V^{\mu-\lambda}\big[(\alpha+2\gamma)\left((\alpha+2\gamma-1)|\nabla\rho|^{2}\rho^{\alpha+2\gamma-2}+\rho^{\alpha+2\gamma-1}\Delta \rho\right)\\
&+(2\mu+n\lambda)(\alpha+2\gamma)\rho^{\alpha+2\gamma-1}\langle\nabla u,\nabla \rho\rangle\big]\\
=&(\alpha+2\gamma)(\alpha+2\gamma-1)\rho^{\alpha+2\gamma-2}|\nabla^{D}\rho|^{2}V^{\lambda-\mu}+(\alpha+2\gamma)\rho^{\alpha+2\gamma-1}\Delta^{D}\rho\\
=&(\alpha+2\gamma)\left((\alpha+2\gamma-1)|\nabla^{D}\rho|^{2}\rho^{\alpha+2\gamma-2}V^{\lambda-\mu}+\rho^{\alpha+2\gamma-1}\Delta^{D}\rho\right),
\endaligned\end{equation}
where, we use \eqref{ad02Sec1} in the first equation and also use \eqref{ad01Sec1} and \eqref{ad02Sec1} in the last second equation.
Together with $|\nabla^{D}\rho|=1$ and $\Delta^{D}\rho\leq \frac{C}{\rho}V^{\lambda-\mu}$, we can get that for $C\neq1 $, the following inequality holds:
\begin{equation}\label{04Sec2thm1}\aligned
&\frac{1}{\alpha+2\gamma}\Delta^{D}(\rho^{\alpha+2\gamma})\\
=&(\alpha+2\gamma-1)|\nabla^{D}\rho|^{2}\rho^{\alpha+2\gamma-2}V^{\lambda-\mu}+\rho^{\alpha+2\gamma-1}\Delta^{D}\rho\\
\leq&(\alpha+2\gamma-1+C)\rho^{\alpha+2\gamma-2}V^{\lambda-\mu}\\
=&0.
\endaligned\end{equation}
Let $\phi=\rho^{\gamma}\psi$, where $\psi\in C^{\infty}_{0} (M\backslash\rho^{-1}\{0\})$. Simple calculation can be obtained,
\begin{equation}\label{05Sec2thm1}\aligned
&\rho^{\alpha}|\nabla\phi|^{2}\\
=&\rho^{\alpha}|\gamma\rho^{\gamma-1}\psi\nabla\rho+\rho^{\gamma}\nabla\psi|^{2}\\
=&\rho^{\alpha}\left(\gamma^{2}\rho^{2(\gamma-1)}\psi^{2}|\nabla\rho|^{2}+\rho^{2\gamma}|\nabla\psi|^{2}+2\gamma\rho^{2\gamma-1}\psi\langle\nabla \rho,\nabla\psi\rangle\right)\\
=&\gamma^{2}\rho^{\alpha+2(\gamma-1)}\psi^{2}|\nabla\rho|^{2}+\rho^{\alpha+2\gamma}|\nabla\psi|^{2}+2\gamma\rho^{\alpha+2\gamma-1}\psi\langle\nabla \rho,\nabla\psi\rangle\\
=&\gamma^{2}\rho^{\alpha+2(\gamma-1)}\psi^{2}|\nabla\rho|^{2}+\rho^{\alpha+2\gamma}|\nabla\psi|^{2}+\frac{\gamma}{\alpha+2\gamma}\langle \nabla \rho^{\alpha+2\gamma},\nabla\psi^{2}\rangle\\
=&\gamma^{2}\rho^{\alpha+2(\gamma-1)}\psi^{2}|\nabla^{D}\rho|^{2}V^{2(\lambda-\mu)}+\rho^{\alpha+2\gamma}|\nabla^{D}\psi|^{2}V^{2(\lambda-\mu)}+\frac{\gamma}{\alpha+2\gamma}\langle V^{\lambda-\mu}\nabla^{D} \rho^{\alpha+2\gamma},\nabla\psi^{2}\rangle \\
\geq&\gamma^{2}\rho^{\alpha+2(\gamma-1)}\psi^{2}|\nabla^{D}\rho|^{2}V^{2(\lambda-\mu)}+\frac{\gamma}{\alpha+2\gamma}\langle V^{\lambda-\mu}\nabla^{D} \rho^{\alpha+2\gamma},\nabla\psi^{2}\rangle.
\endaligned\end{equation}
Multiplying both sides of the above inequality by $V^{\tau+\mu-\lambda}$ and using $|\nabla^{D}\rho|=1$, we get
\begin{equation}\label{06Sec2thm1}\aligned
\rho^{\alpha}|\nabla\phi|^{2}V^{\tau+\mu-\lambda}\geq\gamma^{2}\rho^{\alpha+2\gamma-2}\psi^{2}V^{\tau+\lambda-\mu}
+\frac{\gamma}{\alpha+2\gamma}\langle V^{\tau}\nabla^{D} \rho^{\alpha+2\gamma},\nabla\psi^{2}\rangle.
\endaligned\end{equation}
Integrating \eqref{06Sec2thm1} over $M$ yields
\begin{equation}\label{07Sec2thm1}\aligned
&\int_{M}\rho^{\alpha}|\nabla\phi|^{2}V^{\tau+\mu-\lambda}~dv_{g}\\
\geq&\gamma^{2}\int_{M}\rho^{\alpha+2\gamma-2}\psi^{2}V^{\tau+\lambda-\mu}~dv_{g}
+\frac{\gamma}{\alpha+2\gamma}\int_{M}\langle V^{\tau}\nabla^{D} \rho^{\alpha+2\gamma},\nabla\psi^{2}\rangle ~dv_{g}.
\endaligned\end{equation}
We use the integration by parts \eqref{ad03Sec1} to derive that
\begin{equation}\label{08Sec2thm1}\aligned
&\int_{M}\langle V^{\tau}\nabla^{D} \rho^{\alpha+2\gamma},\nabla\psi^{2}\rangle ~dv_{g}\\
=&\int_{M} V^{\tau}\nabla^{D}_{i} \rho^{\alpha+2\gamma}\nabla_{i}\psi^{2} ~dv_{g}\\
=&-\int_{M}\psi^{2}\nabla_{i}\left(V^{\tau}\nabla^{D}_{i}\rho^{\alpha+2\gamma}\right)~dv_{g}\\
=&-\int_{M}\psi^{2}V^{\tau}D_{i}\left(\nabla^{D}_{i}\rho^{\alpha+2\gamma}\right)~dv_{g}\\
=&-\int_{M}\psi^{2}V^{\tau}\Delta^{D} \rho^{\alpha+2\gamma}~dv_{g},
\endaligned\end{equation}
where, in the last second inequality, we use \eqref{z3lem1}.
Inserting \eqref{08Sec2thm1} into \eqref{07Sec2thm1}, we attain
$$\aligned
&\int_{M}\rho^{\alpha}|\nabla\phi|^{2}V^{\tau+\mu-\lambda}~dv_{g}\\
\geq&\gamma^{2}\int_{M}\rho^{\alpha+2\gamma-2}\psi^{2}V^{\tau+\lambda-\mu}~dv_{g}
-\frac{\gamma}{\alpha+2\gamma}\int_{M}\psi^{2}V^{\tau}\Delta^{D} \rho^{\alpha+2\gamma}~dv_{g}.
\endaligned $$
Applying \eqref{04Sec2thm1} and $\phi=\rho^{\gamma}\psi$, we obtain
\begin{equation}\label{008Sec2thm1}\aligned
&\int_{M}\rho^{\alpha}|\nabla\phi|^{2}V^{\tau+\mu-\lambda}~dv_{g}\\
\geq&\gamma^{2}\int_{M}\rho^{\alpha+2\gamma-2}\psi^{2}V^{\tau+\lambda-\mu}~dv_{g}\\
=&\left(\frac{1-\alpha-C}{2}\right)^{2}\int_{M}\rho^{\alpha-2}\phi^{2}V^{\tau+\lambda-\mu}~dv_{g}.
\endaligned\end{equation}

On the other hand, clearly, when $C=1$, by applying a similar computation as above, we can get
\begin{equation}\label{009Sec2thm1}\aligned
\Delta^{D}\left(\ln\rho\right)=&V^{\mu-\lambda}\left(\Delta\ln\rho+(2\mu+n\lambda)\langle\nabla u,\nabla \ln\rho\rangle\right)\\
=&V^{\mu-\lambda}\left(-\frac{1}{\rho^{2}}|\nabla\rho|^{2}+\frac{1}{\rho}\Delta \rho
+(2\mu+n\lambda)\frac{1}{\rho}\langle\nabla u,\nabla \rho\rangle \right)\\
=&-\frac{1}{\rho^{2}}|\nabla^{D}\rho|^{2}V^{\lambda-\mu}+\frac{1}{\rho}\Delta^{D}\rho
\endaligned\end{equation}
and
\begin{equation}\label{09Sec2thm1}\aligned
\rho^{\alpha}|\nabla\phi|^{2}
=&\gamma^{2}\rho^{\alpha+2(\gamma-1)}\psi^{2}|\nabla\rho|^{2}+\rho^{\alpha+2\gamma}|\nabla\psi|^{2}+2\gamma\rho^{\alpha+2\gamma-1}\psi\langle\nabla \rho,\nabla\psi\rangle\\
=&\gamma^{2}\rho^{\alpha+2(\gamma-1)}\psi^{2}|\nabla\rho|^{2}+\rho^{\alpha+2\gamma}|\nabla\psi|^{2}+2\gamma\rho^{-1}\psi\langle\nabla \rho,\nabla\psi\rangle\\
=&\gamma^{2}\rho^{\alpha+2(\gamma-1)}\psi^{2}|\nabla^{D}\rho|^{2}V^{2(\lambda-\mu)}+\rho^{\alpha+2\gamma}|\nabla^{D}\psi|^{2}V^{2(\lambda-\mu)}+2\gamma\rho^{-1}\psi\langle\nabla \rho,\nabla\psi\rangle\\
\geq&\gamma^{2}\rho^{\alpha+2(\gamma-1)}\psi^{2}|\nabla^{D}\rho|^{2}V^{2(\lambda-\mu)}+\gamma\langle\nabla \ln\rho,\nabla\psi^{2}\rangle.
\endaligned\end{equation}
Together with $|\nabla^{D}\rho|=1$ and $\Delta^{D}\rho\leq \frac{C}{\rho}V^{\lambda-\mu}$ in \eqref{009Sec2thm1}, we can get that for $C=1 $, the following inequality holds:
\begin{equation}\label{0901adSec2thm1}\aligned
\Delta^{D}\left(\ln\rho\right)
\leq&(C-1)\frac{1}{\rho^{2}}V^{\lambda-\mu}\\
=&0.
\endaligned\end{equation}
Multiplying both sides in \eqref{09Sec2thm1} by $V^{\tau+\mu-\lambda}$ and integrating over $M$ yields
\begin{equation}\label{10Sec2thm1}\aligned
\int_{M}\rho^{\alpha}|\nabla\phi|^{2}V^{\tau+\mu-\lambda}~dv_{g}
\geq&\gamma^{2}\int_{M}\rho^{\alpha+2\gamma-2}\psi^{2}|\nabla^{D}\rho|^{2}V^{\tau+\lambda-\mu}~dv_{g}\\
&+\gamma\int_{M}\langle \nabla \ln\rho,\nabla\psi^{2}\rangle V^{\tau+\mu-\lambda} ~dv_{g}.
\endaligned\end{equation}
Applying the integration by parts, we get
\begin{equation}\label{11Sec2thm1}\aligned
&\int_{M}\langle \nabla \ln\rho,\nabla\psi^{2}\rangle V^{\tau+\mu-\lambda} ~dv_{g}\\
=&\int_{M}\langle V^{\tau}\nabla^{D} \ln\rho,\nabla\psi^{2}\rangle ~dv_{g}\\
=&\int_{M}V^{\tau}\nabla^{D}_{i} (\ln\rho)\nabla_{i}\psi^{2} ~dv_{g}\\
=&-\int_{M}\psi^{2}\nabla_{i}\left(V^{\tau} \nabla^{D}_{i}\ln\rho\right) ~dv_{g}\\
=&-\int_{M}\psi^{2}V^{\tau}D_{i}\left( \nabla^{D}_{i}\ln\rho\right) ~dv_{g}\\
=&-\int_{M}V^{\tau}\psi^{2}\Delta^{D}(\ln\rho) ~dv_{g}.
\endaligned\end{equation}
Then combining \eqref{10Sec2thm1} and \eqref{11Sec2thm1}, we obtain
\begin{equation}\label{12Sec2thm1}\aligned
&\int_{M}\rho^{\alpha}|\nabla\phi|^{2}V^{\tau+\mu-\lambda}~dv_{g}\\
\geq&\gamma^{2}\int_{M}\rho^{\alpha+2\gamma-2}\psi^{2}V^{\tau+\lambda-\mu}~dv_{g}
-\gamma\int_{M}V^{\tau}\psi^{2}\Delta^{D}(\ln\rho) ~dv_{g},
\endaligned\end{equation}
where $|\nabla^{D}\rho|=1$.
Substituting \eqref{0901adSec2thm1} into \eqref{12Sec2thm1}, since $\gamma=-\frac{\alpha}{2}>0$ and $\phi=\rho^{\gamma}\psi$, we can get
\begin{equation}\label{13Sec2thm1}\aligned
&\int_{M}\rho^{\alpha}|\nabla\phi|^{2}V^{\tau+\mu-\lambda}~dv_{g}\\
=&\int_{M}\rho^{\alpha}|\nabla^{D}\phi|^{2}V^{\tau+\lambda-\mu}dv_{g}\\
\geq&\left(-\frac{\alpha}{2}\right)^{2}\int_{M}\rho^{\alpha-2}\phi^{2}V^{\tau+\lambda-\mu}~dv_{g}\\
=&\left(\frac{\alpha}{2}\right)^{2}\int_{M}\rho^{\alpha-2}\phi^{2}V^{\tau+\lambda-\mu}~dv_{g}.
\endaligned\end{equation}
By \eqref{13Sec2thm1}, we know that \eqref{01Sec2thm1} is true.

$\mathrm{(ii)}$\quad Let $\gamma=\frac{1-\alpha-C}{2}$. Clearly, $\gamma<0$. By direct computation, we have
\begin{equation}\label{03Sec2thm1-2}\aligned
\Delta^{D}\left(\rho^{\alpha+2\gamma}\right)=&V^{\mu-\lambda}\left(\Delta\rho^{\alpha+2\gamma}+(2\mu+n\lambda)\langle\nabla u,\nabla \rho^{\alpha+2\gamma}\rangle\right)\\
=&V^{\mu-\lambda}\big[(\alpha+2\gamma)\left((\alpha+2\gamma-1)|\nabla\rho|^{2}\rho^{\alpha+2\gamma-2}+\rho^{\alpha+2\gamma-1}\Delta \rho\right)\\
&+(2\mu+n\lambda)(\alpha+2\gamma)\rho^{\alpha+2\gamma-1}\langle\nabla u,\nabla \rho\rangle\big]\\
=&(\alpha+2\gamma)(\alpha+2\gamma-1)\rho^{\alpha+2\gamma-2}|\nabla^{D}\rho|^{2}V^{\lambda-\mu}+(\alpha+2\gamma)\rho^{\alpha+2\gamma-1}\Delta^{D}\rho\\
=&(\alpha+2\gamma)\left((\alpha+2\gamma-1)|\nabla^{D}\rho|^{2}\rho^{\alpha+2\gamma-2}V^{\lambda-\mu}+\rho^{\alpha+2\gamma-1}\Delta^{D}\rho\right),
\endaligned\end{equation}
where, we use \eqref{ad02Sec1} in the first equation and also use \eqref{ad01Sec1} and \eqref{ad02Sec1} in the last second equation. Together with $|\nabla^{D}\rho|=1$ and $\Delta^{D}\rho\geq \frac{C}{\rho}V^{\lambda-\mu}$, we can get that for $C\neq1 $, the following inequality holds:
\begin{equation}\label{04Sec2thm1-2}\aligned
&\frac{1}{\alpha+2\gamma}\Delta^{D}(\rho^{\alpha+2\gamma})\\
=&(\alpha+2\gamma-1)|\nabla^{D}\rho|^{2}\rho^{\alpha+2\gamma-2}V^{\lambda-\mu}+\rho^{\alpha+2\gamma-1}\Delta^{D}\rho\\
\geq&(\alpha+2\gamma-1+C)\rho^{\alpha+2\gamma-2}V^{\lambda-\mu}\\
=&0.
\endaligned\end{equation}
Let $\phi=\rho^{\gamma}\psi$, where $\psi\in C^{\infty}_{0} (M\backslash\rho^{-1}\{0\})$. Simple calculation can be obtained,
\begin{equation}\label{05Sec2thm1-2}\aligned
&\rho^{\alpha}|\nabla\phi|^{2}\\
=&\rho^{\alpha}|\gamma\rho^{\gamma-1}\psi\nabla\rho+\rho^{\gamma}\nabla\psi|^{2}\\
=&\rho^{\alpha}\left(\gamma^{2}\rho^{2(\gamma-1)}\psi^{2}|\nabla\rho|^{2}+\rho^{2\gamma}|\nabla\psi|^{2}+2\gamma\rho^{2\gamma-1}\psi\langle\nabla \rho,\nabla\psi\rangle\right)\\
=&\gamma^{2}\rho^{\alpha+2(\gamma-1)}\psi^{2}|\nabla\rho|^{2}+\rho^{\alpha+2\gamma}|\nabla\psi|^{2}+2\gamma\rho^{\alpha+2\gamma-1}\psi\langle\nabla \rho,\nabla\psi\rangle\\
=&\gamma^{2}\rho^{\alpha+2(\gamma-1)}\psi^{2}|\nabla\rho|^{2}+\rho^{\alpha+2\gamma}|\nabla\psi|^{2}+\frac{\gamma}{\alpha+2\gamma}\langle \nabla \rho^{\alpha+2\gamma},\nabla\psi^{2}\rangle\\
=&\gamma^{2}\rho^{\alpha+2(\gamma-1)}\psi^{2}|\nabla^{D}\rho|^{2}V^{2(\lambda-\mu)}+\rho^{\alpha+2\gamma}|\nabla^{D}\psi|^{2}V^{2(\lambda-\mu)}+\frac{\gamma}{\alpha+2\gamma}\langle V^{\lambda-\mu}\nabla^{D} \rho^{\alpha+2\gamma},\nabla\psi^{2}\rangle \\
\geq&\gamma^{2}\rho^{\alpha+2(\gamma-1)}\psi^{2}|\nabla^{D}\rho|^{2}V^{2(\lambda-\mu)}+\frac{\gamma}{\alpha+2\gamma}\langle V^{\lambda-\mu}\nabla^{D} \rho^{\alpha+2\gamma},\nabla\psi^{2}\rangle.
\endaligned\end{equation}
Multiplying both sides of the above inequality by $V^{\tau+\mu-\lambda}$ and using $|\nabla^{D}\rho|=1$, we get
\begin{equation}\label{06Sec2thm1-2}\aligned
\rho^{\alpha}|\nabla\phi|^{2}V^{\tau+\mu-\lambda}\geq\gamma^{2}\int_{M}\rho^{\alpha+2\gamma-2}\psi^{2}V^{\tau+\lambda-\mu}
+\frac{\gamma}{\alpha+2\gamma}\langle V^{\tau}\nabla^{D} \rho^{\alpha+2\gamma},\nabla\psi^{2}\rangle.
\endaligned\end{equation}
Integrating \eqref{06Sec2thm1-2} over $M$ yields
\begin{equation}\label{07Sec2thm1-2}\aligned
&\int_{M}\rho^{\alpha}|\nabla\phi|^{2}V^{\tau+\mu-\lambda}~dv_{g}\\
\geq&\gamma^{2}\int_{M}\rho^{\alpha+2\gamma-2}\psi^{2}V^{\tau+\lambda-\mu}~dv_{g}
+\frac{\gamma}{\alpha+2\gamma}\int_{M}\langle V^{\tau}\nabla^{D} \rho^{\alpha+2\gamma},\nabla\psi^{2}\rangle ~dv_{g}.
\endaligned\end{equation}
We use the integration by parts \eqref{ad03Sec1} to derive that
\begin{equation}\label{08Sec2thm1-2}\aligned
&\int_{M}\langle V^{\tau}\nabla^{D} \rho^{\alpha+2\gamma},\nabla\psi^{2}\rangle ~dv_{g}\\
=&\int_{M} V^{\tau}\nabla^{D}_{i} \rho^{\alpha+2\gamma}\nabla_{i}\psi^{2} ~dv_{g}\\
=&-\int_{M}\psi^{2}\nabla_{i}\left(V^{\tau}\nabla^{D}_{i}\rho^{\alpha+2\gamma}\right)~dv_{g}\\
=&-\int_{M}\psi^{2}V^{\tau}D_{i}\left(\nabla^{D}_{i}\rho^{\alpha+2\gamma}\right)~dv_{g}\\
=&-\int_{M}\psi^{2}V^{\tau}\Delta^{D} \rho^{\alpha+2\gamma}~dv_{g},
\endaligned\end{equation}
where, in the last second inequality, we use \eqref{z3lem1}.
Inserting \eqref{08Sec2thm1-2} into \eqref{07Sec2thm1-2}, we attain
$$\aligned
&\int_{M}\rho^{\alpha}|\nabla\phi|^{2}V^{\tau+\mu-\lambda}~dv_{g}\\
\geq&\gamma^{2}\int_{M}\rho^{\alpha+2\gamma-2}\psi^{2}V^{\tau+\lambda-\mu}~dv_{g}
-\frac{\gamma}{\alpha+2\gamma}\int_{M}\psi^{2}\Delta^{D} \rho^{\alpha+2\gamma}V^{\tau}~dv_{g}.
\endaligned $$
Applying \eqref{04Sec2thm1-2} and $\phi=\rho^{\gamma}\psi$, we obtain
\begin{equation}\label{08Sec2thm1-2}\aligned
&\int_{M}\rho^{\alpha}|\nabla\phi|^{2}V^{\tau+\mu-\lambda}~dv_{g}\\
\geq&\gamma^{2}\int_{M}\rho^{\alpha+2\gamma-2}\psi^{2}V^{\tau+\lambda-\mu}~dv_{g}\\
=&\left(\frac{1-\alpha-C}{2}\right)^{2}\int_{M}\rho^{\alpha-2}\phi^{2}V^{\tau+\lambda-\mu}~dv_{g}\\
=&\left(\frac{\alpha+C-1}{2}\right)^{2}\int_{M}\rho^{\alpha-2}\phi^{2}V^{\tau+\lambda-\mu}dv_{g}.
\endaligned\end{equation}

On the other hand, clearly,when $C=1$, by applying a similar computation as above, we can get
\begin{equation}\label{03Sec2thm1-2}\aligned
\Delta^{D}\left(\ln\rho\right)=&V^{\mu-\lambda}\left(\Delta\ln\rho+(2\mu+n\lambda)\langle\nabla u,\nabla \ln\rho\rangle\right)\\
=&V^{\mu-\lambda}\left(-\frac{1}{\rho^{2}}|\nabla\rho|^{2}+\frac{1}{\rho}\Delta \rho
+(2\mu+n\lambda)\frac{1}{\rho}\langle\nabla u,\nabla \rho\rangle \right)\\
=&-\frac{1}{\rho^{2}}|\nabla^{D}\rho|^{2}V^{\lambda-\mu}+\frac{1}{\rho}\Delta^{D}\rho
\endaligned\end{equation}
and
\begin{equation}\label{09Sec2thm1-2}\aligned
\rho^{\alpha}|\nabla\phi|^{2}
=&\gamma^{2}\rho^{\alpha+2(\gamma-1)}\psi^{2}|\nabla\rho|^{2}+\rho^{\alpha+2\gamma}|\nabla\psi|^{2}+2\gamma\rho^{\alpha+2\gamma-1}\psi\langle\nabla \rho,\nabla\psi\rangle\\
=&\gamma^{2}\rho^{\alpha+2(\gamma-1)}\psi^{2}|\nabla\rho|^{2}+\rho^{\alpha+2\gamma}|\nabla\psi|^{2}+2\gamma\rho^{-1}\psi\langle\nabla \rho,\nabla\psi\rangle\\
=&\gamma^{2}\rho^{\alpha+2(\gamma-1)}\psi^{2}|\nabla^{D}\rho|^{2}V^{2(\lambda-\mu)}+\rho^{\alpha+2\gamma}|\nabla^{D}\psi|^{2}V^{2(\lambda-\mu)}+2\gamma\rho^{-1}\psi\langle\nabla \rho,\nabla\psi\rangle\\
\geq&\gamma^{2}\rho^{\alpha+2(\gamma-1)}\psi^{2}|\nabla^{D}\rho|^{2}V^{2(\lambda-\mu)}+\gamma\langle\nabla \ln\rho,\nabla\psi^{2}\rangle.
\endaligned\end{equation}
Together with $|\nabla^{D}\rho|=1$ and $\Delta^{D}\rho\geq \frac{C}{\rho}V^{\lambda-\mu}$ in \eqref{009Sec2thm1}, we can get that for $C=1 $, the following inequality holds:
\begin{equation}\label{0901adSec2thm1-2}\aligned
\Delta^{D}\left(\ln\rho\right)
\geq&(C-1)\frac{1}{\rho^{2}}V^{\lambda-\mu}\\
=&0.
\endaligned\end{equation}
Multiplying both sides in \eqref{09Sec2thm1-2} by $V^{\tau+\mu-\lambda}$ and integrating over $M$ yields
\begin{equation}\label{10Sec2thm1-2}\aligned
&\int_{M}\rho^{\alpha}|\nabla\phi|^{2}V^{\tau+\mu-\lambda}~dv_{g}\\
\geq&\gamma^{2}\int_{M}\rho^{\alpha+2\gamma-2}\psi^{2}|\nabla^{D}\rho|^{2}V^{\tau+\lambda-\mu}~dv_{g}
+\gamma\int_{M}\langle \nabla \ln\rho,\nabla\psi^{2}\rangle V^{\tau+\mu-\lambda} ~dv_{g}.
\endaligned\end{equation}
Applying the integration by parts, we get
\begin{equation}\label{11Sec2thm1-2}\aligned
&\int_{M}\langle \nabla \ln\rho,\nabla\psi^{2}\rangle V^{\tau+\mu-\lambda} ~dv_{g}\\
=&\int_{M}\langle V^{\tau}\nabla^{D} \ln\rho,\nabla\psi^{2}\rangle ~dv_{g}\\
=&\int_{M}V^{\tau}\nabla^{D}_{i} (\ln\rho)\nabla_{i}\psi^{2} ~dv_{g}\\
=&-\int_{M}\psi^{2}\nabla_{i}\left(V^{\tau} \nabla^{D}_{i}\ln\rho\right) ~dv_{g}\\
=&-\int_{M}\psi^{2}V^{\tau}D_{i}\left( \nabla^{D}_{i}\ln\rho\right) ~dv_{g}\\
=&-\int_{M}V^{\tau}\psi^{2}\Delta^{D}(\ln\rho) ~dv_{g}.
\endaligned\end{equation}
Then combining \eqref{10Sec2thm1-2} and \eqref{11Sec2thm1-2}, we obtain
\begin{equation}\label{12Sec2thm1-2}\aligned
&\int_{M}\rho^{\alpha}|\nabla\phi|^{2}V^{\tau+\mu-\lambda}~dv_{g}\\
\geq&\gamma^{2}\int_{M}\rho^{\alpha+2\gamma-2}\psi^{2}V^{\tau+\lambda-\mu}~dv_{g}
-\gamma\int_{M}V^{\tau}\psi^{2}\Delta^{D}(\ln\rho) ~dv_{g},
\endaligned\end{equation}
where $|\nabla^{D}\rho|=1$. Substituting \eqref{0901adSec2thm1-2} into \eqref{12Sec2thm1-2}, since $\gamma=-\frac{\alpha}{2}<0$ and $\phi=\rho^{\gamma}\psi$, we can get
\begin{equation}\label{13Sec2thm1-2}\aligned
&\int_{M}\rho^{\alpha}|\nabla\phi|^{2}V^{\tau+\mu-\lambda}~dv_{g}\\
=&\int_{M}\rho^{\alpha}|\nabla^{D}\phi|^{2}V^{\tau+\lambda-\mu}dv_{g}\\
\geq&\left|-\frac{\alpha}{2}\right|^{2}\int_{M}\rho^{\alpha-2}\phi^{2}V^{\tau+\lambda-\mu}~dv_{g}\\
\geq&\left|\frac{\alpha}{2}\right|^{2}\int_{M}\rho^{\alpha-2}\phi^{2}V^{\tau+\lambda-\mu}~dv_{g}.
\endaligned\end{equation}
By \eqref{13Sec2thm1-2}, we know that \eqref{02Sec2thm1} is true. This completes the proof of Theorem \ref{thm1}.
\end{proof}

Our second result is $L^{p}$-version of the Hardy type inequality without remainder term given as follows.

\begin{theorem}\label{thm2}
Let $(M^{n}, g,V=e^{u})$ be an n-dimensional Riemannian triple and
$\lambda,\mu\in \mathbb{R}$. Let $D=D^{\lambda,\mu}$ be the affine connection defined as in \eqref{01adsec02} and $\tau=(n+1)\lambda+\mu$.
Let $\rho$ be a nonnegative function on $M$ such that $|\nabla^{D}\rho|=1$ in the sense of distributions.
Then, for any $p, q\in\mathbb{R}$, $1<p<+\infty$, $0\leq q \leq p$, and any compactly supported smooth function $\phi\in C^{\infty}_{0} (M\backslash\rho^{-1}\{0\})$, we have

$\mathrm{(i)}$\quad When $\Delta^{D}\rho\leq \frac{C}{\rho}V^{\lambda-\mu}$ in the sense of distributions, where $C$ is a constant and $q-C-1>0$, the
following inequality
\begin{equation}\label{01Sec2thm2-1}\aligned
\int_{M}\frac{|\phi|^{p}}{\rho^{q}}V^{\tau+\lambda-\mu}dv_{g}\leq
\left(\frac{p}{q-C-1}\right)^{q}\left(\int_{M}|\nabla^{D}\phi|^{p}V^{\tau+\lambda-\mu}dv_{g}\right)^{\frac{q}{p}}
\left(\int_{M}|\phi|^{p}V^{\tau+\lambda-\mu}dv_{g}\right)^{\frac{p-q}{p}}
\endaligned\end{equation}
holds.

$\mathrm{(ii)}$\quad When $\Delta^{D}\rho\geq \frac{C}{\rho}V^{\lambda-\mu}$ in the sense of distributions, where $C$ is a constant and $C+1-q>0$, the
following inequality
\begin{equation}\label{02Sec2thm2-2}\aligned
\int_{M}\frac{|\phi|^{p}}{\rho^{q}}V^{\tau+\lambda-\mu}dv_{g}\leq
\left(\frac{p}{C+1-q}\right)^{q}\left(\int_{M}|\nabla^{D}\phi|^{p}V^{\tau+\lambda-\mu}dv_{g}\right)^{\frac{q}{p}}
\left(\int_{M}|\phi|^{p}V^{\tau+\lambda-\mu}dv_{g}\right)^{\frac{p-q}{p}}
\endaligned\end{equation}
holds.
\end{theorem}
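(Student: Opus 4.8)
The plan is to combine a weighted divergence identity with two applications of Hölder's inequality, the latter arranged so as to produce the exact constant $(p/c_{\ast})^{q}$.

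\emph{Step 1 (the vector field).} I take $W=\rho^{1-q}\nabla^{D}\rho$; note that $W=\tfrac{1}{2-q}\nabla^{D}(\rho^{2-q})$ for $q\neq 2$ and $W=\nabla^{D}(\ln\rho)$ for $q=2$. Computing $\mathrm{div}^{D}W=\Delta^{D}$ of the relevant function exactly as in \eqref{03Sec2thm1} and \eqref{009Sec2thm1}, and using $|\nabla^{D}\rho|=1$, one gets
\[
\mathrm{div}^{D}W=(1-q)\,\rho^{-q}V^{\lambda-\mu}+\rho^{1-q}\,\Delta^{D}\rho .
\]
In case (i), $\Delta^{D}\rho\le\frac{C}{\rho}V^{\lambda-\mu}$ gives $\mathrm{div}^{D}W\le-(q-C-1)\rho^{-q}V^{\lambda-\mu}$; in case (ii), $\Delta^{D}\rho\ge\frac{C}{\rho}V^{\lambda-\mu}$ gives $\mathrm{div}^{D}W\ge(C+1-q)\rho^{-q}V^{\lambda-\mu}$. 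Set $c_{\ast}:=q-C-1>0$ in case (i) and $c_{\ast}:=C+1-q>0$ in case (ii).

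\emph{Step 2 (integration by parts).} Multiplying the pointwise bound by $|\phi|^{p}V^{\tau}$, integrating over $M$, and using Lemma~\ref{lem2-2} (so that $V^{\tau}\mathrm{div}^{D}W=\nabla_{i}(V^{\tau}W^{i})$) together with the compact support of $\phi$ away from $\rho^{-1}\{0\}$, the divergence theorem yields
\[
c_{\ast}\int_{M}\frac{|\phi|^{p}}{\rho^{q}}\,V^{\tau+\lambda-\mu}\,dv_{g}\le\Bigl|\int_{M}\langle\nabla(|\phi|^{p}),W\rangle\,V^{\tau}\,dv_{g}\Bigr|\le p\int_{M}\frac{|\phi|^{p-1}}{\rho^{q-1}}\,|\nabla^{D}\phi|\,V^{\tau+\lambda-\mu}\,dv_{g},
\]
where in the last inequality I used $|W|=\rho^{1-q}$, Cauchy--Schwarz, $|\nabla(|\phi|^{p})|\le p|\phi|^{p-1}|\nabla\phi|$, and $\nabla\phi=V^{\lambda-\mu}\nabla^{D}\phi$ from \eqref{ad01Sec1}. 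Denote the left-hand integral by $J$ (it is finite since $\phi$ has compact support away from $\rho^{-1}\{0\}$).

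\emph{Step 3 (two Hölder inequalities).} This is the crux. For $1<q\le p$ I rewrite the last integrand as
\[
\frac{|\phi|^{p-1}}{\rho^{q-1}}\,|\nabla^{D}\phi|=\Bigl(\frac{|\phi|^{p}}{\rho^{q}}\Bigr)^{\frac{q-1}{q}}\,|\phi|^{\frac{p-q}{q}}\,|\nabla^{D}\phi|
\]
and apply Hölder with exponents $\tfrac{q}{q-1}$ and $q$, splitting the weight as $V^{\tau+\lambda-\mu}=(V^{\tau+\lambda-\mu})^{\frac{q-1}{q}}(V^{\tau+\lambda-\mu})^{\frac{1}{q}}$, to obtain
\[
c_{\ast}J\le p\,J^{\frac{q-1}{q}}\Bigl(\int_{M}|\phi|^{p-q}|\nabla^{D}\phi|^{q}V^{\tau+\lambda-\mu}\,dv_{g}\Bigr)^{\frac{1}{q}}.
\]
Cancelling $J^{(q-1)/q}$ and raising to the $q$-th power gives $c_{\ast}^{q}J\le p^{q}\int_{M}|\phi|^{p-q}|\nabla^{D}\phi|^{q}V^{\tau+\lambda-\mu}\,dv_{g}$, and a second Hölder inequality with exponents $\tfrac{p}{p-q}$ and $\tfrac{p}{q}$ bounds the right-hand integral by $\bigl(\int_{M}|\phi|^{p}V^{\tau+\lambda-\mu}dv_{g}\bigr)^{\frac{p-q}{p}}\bigl(\int_{M}|\nabla^{D}\phi|^{p}V^{\tau+\lambda-\mu}dv_{g}\bigr)^{\frac{q}{p}}$. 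Substituting $c_{\ast}$ then gives exactly \eqref{01Sec2thm2-1} and \eqref{02Sec2thm2-2}. The boundary cases $q=0$ (an identity) and $q=1$ (apply only the second Hölder inequality to Step 2), and then $0<q<1$ by interpolating these with Hölder, are routine.

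The step I expect to be the main obstacle is Step 3: the exponent $1-q$ in $W$ and the split of $|\phi|^{p-1}\rho^{1-q}|\nabla^{D}\phi|$ must be chosen precisely so that the power of $\rho$ is annihilated and the factor $J^{(q-1)/q}$ can be absorbed into the left-hand side; a cruder split would only give a larger constant. A secondary, pervasive issue is the bookkeeping of the conformal factors: one must consistently translate between $\nabla$ and $\nabla^{D}=V^{\mu-\lambda}\nabla$ and between the weights $V^{\tau}$ and $V^{\tau+\lambda-\mu}$, using $|\nabla\rho|=V^{\lambda-\mu}$, which is equivalent to the hypothesis $|\nabla^{D}\rho|=1$.
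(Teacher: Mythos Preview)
Your argument is correct and follows the same overall strategy as the paper: compute the affine divergence of the vector field $\rho^{1-q}\nabla^{D}\rho$, integrate by parts via Lemma~\ref{lem2-2}, and then apply H\"older's inequality twice. The only difference lies in how the two H\"older splittings are arranged in Step~3. The paper first separates $|\nabla^{D}\phi|$ with exponents $(p,p')$, producing the intermediate integral $\int_{M}|\phi|^{p}\rho^{-p(q-1)/(p-1)}V^{\tau+\lambda-\mu}\,dv_{g}$, and then interpolates this between $J$ and $\int_{M}|\phi|^{p}V^{\tau+\lambda-\mu}\,dv_{g}$. Your route instead uses exponents $(\tfrac{q}{q-1},q)$ first to extract the factor $J^{(q-1)/q}$ directly, absorbs it, and then applies H\"older with $(\tfrac{p}{p-q},\tfrac{p}{q})$. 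Both yield the same constant; your arrangement is slightly more transparent since the absorbable factor appears immediately rather than after a second interpolation.

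One caveat on the endpoint range: your interpolation for $0<q<1$ proves the inequality in case~(i) (where $c_{\ast,1}=-C>q-C-1$, so interpolation gives an even smaller constant), but in case~(ii) one has $c_{\ast,1}=C<C+1-q$, so interpolating from $q=1$ yields only $(p/C)^{q}$, which is \emph{larger} than the claimed $(p/(C+1-q))^{q}$ and therefore does not recover the stated bound. The paper's own proof shares this issue: its second H\"older step~\eqref{05Sec2thm2-1} requires the exponent $\tfrac{(p-1)q}{p(q-1)}\geq 1$, hence $q\geq 1$. So for $1\le q\le p$ your argument is complete and matches the paper; for $0<q<1$ in case~(ii) neither argument, as written, gives the sharp constant.
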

\begin{proof}

$\mathrm{(i)}$\quad Let $\mathrm{div}^{D}X=D_{i}X^{i}$ be the divergence of $X$ with respect to $D$, where $X$ is a vector field on $M$. By direct computation, together with assumptions $|\nabla^{D}\rho|=1$ and $\Delta^{D}\rho\leq \frac{C}{\rho}V^{\lambda-\mu}$, we have
\begin{equation}\label{ad01Sec2thm2-1}
\aligned
&V^{\tau}D_{i}\left(|\phi|^{p}\frac{\nabla^{D}_{i}\rho}{\rho^{q-1}}\right)\\
=&\nabla_{i}\left(V^{\tau+\mu-\lambda}|\phi|^{p}\frac{\nabla_{i}\rho}{\rho^{q-1}}\right)\\
=&\nabla_{i}\left(e^{u(\tau+\mu-\lambda)}|\phi|^{p}\frac{\nabla_{i}\rho}{\rho^{q-1}}\right)\\
=&(\tau+\mu-\lambda)\langle\nabla u,\nabla\rho\rangle\frac{|\phi|^{p}}{\rho^{q-1}}V^{\tau+\mu-\lambda}+p\frac{|\phi|^{p-1}}{\rho^{q-1}}\langle\nabla \phi,\nabla\rho\rangle V^{\tau+\mu-\lambda}\\
&+\frac{|\phi|^{p}}{\rho^{q-1}}V^{\tau+\mu-\lambda}\Delta \rho
+(1-q)|\nabla\rho|^{2}\frac{|\phi|^{p}}{\rho^{q}}V^{\tau+\mu-\lambda}\\
=&\frac{|\phi|^{p}}{\rho^{q-1}}V^{\tau}\Delta^{D} \rho+p\frac{|\phi|^{p-1}}{\rho^{q-1}}\langle\nabla^{D} \phi,\nabla^{D}\rho\rangle V^{\tau+\lambda-\mu}+(1-q)|\nabla^{D}\rho|^{2}\frac{|\phi|^{p}}{\rho^{q}}V^{\tau+\lambda-\mu}\\
\leq&(1+C-q)\frac{|\phi|^{p}}{\rho^{q}}V^{\tau+\lambda-\mu}+p\frac{|\phi|^{p-1}}{\rho^{q-1}}\langle\nabla^{D} \phi,\nabla^{D}\rho\rangle V^{\tau+\lambda-\mu}.
\endaligned
\end{equation}
Integrating the above inequality \eqref{ad01Sec2thm2-1} over $M$ yields
\begin{equation}\label{03Sec2thm2-1}\aligned
&(q-C-1)\int_{M}\frac{|\phi|^{p}}{\rho^{q}}V^{\tau+\lambda-\mu} ~dv_{g}\\
\leq& p\int_{M}\frac{|\phi|^{p-1}}{\rho^{q-1}}\langle\nabla^{D} \phi,\nabla^{D}\rho\rangle V^{\tau+\lambda-\mu} ~dv_{g}\\
\leq& p\int_{M}\frac{|\phi|^{p-1}}{\rho^{q-1}}|\nabla^{D} \phi||\nabla^{D}\rho|V^{\tau+\lambda-\mu} ~dv_{g}\\
=& p\int_{M}\frac{|\phi|^{p-1}}{\rho^{q-1}}|\nabla^{D} \phi|V^{\tau+\lambda-\mu} ~dv_{g}.
\endaligned\end{equation}
It follows from H\"{o}lder's inequality that
\begin{equation}\label{04Sec2thm2-1}\aligned
&\int_{M}\frac{|\phi|^{p-1}}{\rho^{q-1}}|\nabla^{D} \phi|V^{\tau+\lambda-\mu} ~dv_{g}\\
=&\int_{M}|\nabla^{D} \phi|V^{\frac{\tau+\lambda-\mu}{p}}\frac{|\phi|^{p-1}}{\rho^{q-1}}V^{\frac{(\tau+\lambda-\mu)(p-1)}{p}} ~dv_{g}\\
\leq&\left(\int_{M}\left[|\nabla^{D} \phi|V^{\frac{\tau+\lambda-\mu}{p}}\right]^{p} ~dv_{g}\right)^{\frac{1}{p}}
\left(\int_{M}\left[\frac{|\phi|^{p-1}}{\rho^{q-1}}V^{\frac{(\tau+\lambda-\mu)(p-1)}{p}}\right]^{\frac{p}{p-1}} ~dv_{g}\right)^{\frac{p-1}{p}}\\
=&\left(\int_{M}|\nabla^{D} \phi|^{p}V^{\tau+\lambda-\mu} ~dv_{g}\right)^{\frac{1}{p}}
\left(\int_{M}\frac{|\phi|^{p}}{\rho^{\frac{p(q-1)}{p-1}}}V^{\tau+\lambda-\mu} ~dv_{g}\right)^{\frac{p-1}{p}}\\
\endaligned\end{equation}
and
\begin{equation}\label{05Sec2thm2-1}\aligned
&\int_{M}\frac{|\phi|^{p}}{\rho^{\frac{p(q-1)}{p-1}}}V^{\tau+\lambda-\mu} ~dv_{g}\\
=&\int_{M}\rho^{-\frac{p(q-1)}{p-1}}\left(|\phi|^{p}V^{\tau+\lambda-\mu}
\right)^{\frac{p(q-1)}{(p-1)q}}
\left(|\phi|^{p}V^{\tau+\lambda-\mu}\right)^{\left(1-\frac{p(q-1)}{(p-1)q}\right)} ~dv_{g}\\
\leq&\left(\int_{M}\left[\frac{\left(|\phi|^{p}V^{\tau+\lambda-\mu}\right)^{\frac{p(q-1)}{(p-1)q}}}
{\rho^{\frac{p(q-1)}{p-1}}}\right]^{\frac{(p-1)q}{p(q-1)}} ~dv_{g}\right)^{\frac{p(q-1)}{(p-1)q}}
\left(\int_{M}\left[\left(|\phi|^{p}V^{\tau+\lambda-\mu}\right)^{\left(1-\frac{p(q-1)}{(p-1)q}\right)}\right]^{\frac{1}{\left(1-\frac{p(q-1)}{(p-1)q}\right)}} ~dv_{g}\right)^{\left(1-\frac{p(q-1)}{(p-1)q}\right)}\\
=&\left(\int_{M}\frac{|\phi|^{p}V^{\tau+\lambda-\mu}}{\rho^{q}} ~dv_{g}\right)^{\frac{p(q-1)}{(p-1)q}}
\left(\int_{M}|\phi|^{p}V^{\tau+\lambda-\mu}~dv_{g}\right)^{\left(1-\frac{p(q-1)}{(p-1)q}\right)}.
\endaligned\end{equation}
Taking \eqref{05Sec2thm2-1} into \eqref{04Sec2thm2-1}, we have
\begin{equation}\label{06Sec2thm2-1}\aligned
&\int_{M}\frac{|\phi|^{p-1}}{\rho^{q-1}}|\nabla^{D} \phi|V^{\tau+\lambda-\mu} ~dv_{g}\\
\leq&\left(\int_{M}|\nabla^{D} \phi|^{p}V^{\tau+\lambda-\mu} ~dv_{g}\right)^{\frac{1}{p}}
\left(\int_{M}\frac{|\phi|^{p}V^{\tau+\lambda-\mu}}{\rho^{q}} ~dv_{g}\right)^{\frac{q-1}{q}}
\left(\int_{M}|\phi|^{p}V^{\tau+\lambda-\mu}~dv_{g}\right)^{\left(1-\frac{p(q-1)}{(p-1)q}\right)\frac{p-1}{p}}.
\endaligned\end{equation}
Then combining \eqref{06Sec2thm2-1} and \eqref{03Sec2thm2-1}, we obtain
$$\aligned
\left(\int_{M}\frac{|\phi|^{p}}{\rho^{q}}V^{\tau+\lambda-\mu} ~dv_{g}\right)^{\frac{1}{q}}
\leq\frac{p}{q-C-1}\left(\int_{M}|\nabla^{D} \phi|^{p}V^{\tau+\lambda-\mu} ~dv_{g}\right)^{\frac{1}{p}}
\left(\int_{M}|\phi|^{p}V^{\tau+\lambda-\mu}~dv_{g}\right)^{\left(1-\frac{p(q-1)}{(p-1)q}\right)\frac{p-1}{p}},
\endaligned$$
which is equivalent to
$$\aligned
\int_{M}\frac{|\phi|^{p}}{\rho^{q}}V^{\tau+\lambda-\mu}dv_{g}\leq
\left(\frac{p}{q-C-1}\right)^{q}\left(\int_{M}|\nabla^{D}\phi|^{p}V^{\tau+\lambda-\mu}dv_{g}\right)^{\frac{q}{p}}
\left(\int_{M}|\phi|^{p}V^{\tau+\lambda-\mu}dv_{g}\right)^{\frac{p-q}{p}}.
\endaligned$$

$\mathrm{(ii)}$\quad By direct computation, together with assumptions $|\nabla^{D}\rho|=1$ and $\Delta^{D}\rho\geq \frac{C}{\rho}V^{\lambda-\mu}$, we have
\begin{equation}\label{ad01Sec2thm2-2}
\aligned
&V^{\tau}D_{i}\left(|\phi|^{p}\frac{\nabla^{D}_{i}\rho}{\rho^{q-1}}\right)\\
=&\nabla_{i}\left(V^{\tau+\mu-\lambda}|\phi|^{p}\frac{\nabla_{i}\rho}{\rho^{q-1}}\right)\\
=&\nabla_{i}\left(e^{u(\tau+\mu-\lambda)}|\phi|^{p}\frac{\nabla_{i}\rho}{\rho^{q-1}}\right)\\
=&(\tau+\mu-\lambda)\langle\nabla u,\nabla\rho\rangle\frac{|\phi|^{p}}{\rho^{q-1}}V^{\tau+\mu-\lambda}+p\frac{|\phi|^{p-1}}{\rho^{q-1}}\langle\nabla \phi,\nabla\rho\rangle V^{\tau+\mu-\lambda}\\
&+\frac{|\phi|^{p}}{\rho^{q-1}}V^{\tau+\mu-\lambda}\Delta \rho
+(1-q)|\nabla\rho|^{2}\frac{|\phi|^{p}}{\rho^{q}}V^{\tau+\mu-\lambda}\\
=&\frac{|\phi|^{p}}{\rho^{q-1}}V^{\tau}\Delta^{D} \rho+p\frac{|\phi|^{p-1}}{\rho^{q-1}}\langle\nabla^{D} \phi,\nabla^{D}\rho\rangle V^{\tau+\lambda-\mu}+(1-q)|\nabla^{D}\rho|^{2}\frac{|\phi|^{p}}{\rho^{q}}V^{\tau+\lambda-\mu}\\
\geq&(1+C-q)\frac{|\phi|^{p}}{\rho^{q}}V^{\tau+\lambda-\mu}+p\frac{|\phi|^{p-1}}{\rho^{q-1}}\langle\nabla^{D} \phi,\nabla^{D}\rho\rangle V^{\tau+\lambda-\mu}.
\endaligned
\end{equation}
Integrating the above inequality \eqref{ad01Sec2thm2-2} over $M$ yields
\begin{equation}\label{03Sec2thm2-2}\aligned
&(C+1-q)\int_{M}\frac{|\phi|^{p}}{\rho^{q}}V^{\tau+\lambda-\mu} ~dv_{g}\\
\leq& -p\int_{M}\frac{|\phi|^{p-1}}{\rho^{q-1}}\langle\nabla^{D} \phi,\nabla^{D}\rho\rangle V^{\tau+\lambda-\mu} ~dv_{g}\\
\leq& \left|-p\int_{M}\frac{|\phi|^{p-1}}{\rho^{q-1}}\langle\nabla^{D} \phi,\nabla^{D}\rho\rangle V^{\tau+\lambda-\mu} ~dv_{g}\right|\\
\leq& p\int_{M}\frac{|\phi|^{p-1}}{\rho^{q-1}}|\nabla^{D} \phi||\nabla^{D}\rho|V^{\tau+\lambda-\mu} ~dv_{g}\\
=& p\int_{M}\frac{|\phi|^{p-1}}{\rho^{q-1}}|\nabla^{D} \phi|V^{\tau+\lambda-\mu} ~dv_{g}.
\endaligned\end{equation}
It follows from H\"{o}lder's inequality that
\begin{equation}\label{04Sec2thm2-2}\aligned
&\int_{M}\frac{|\phi|^{p-1}}{\rho^{q-1}}|\nabla^{D} \phi|V^{\tau+\lambda-\mu} ~dv_{g}
=\int_{M}|\nabla^{D} \phi|V^{\frac{\tau+\lambda-\mu}{p}}\frac{|\phi|^{p-1}}{\rho^{q-1}}|\nabla^{D} \phi|V^{\frac{(\tau+\lambda-\mu)(p-1)}{p}} ~dv_{g}\\
\leq&\left(\int_{M}\left[|\nabla^{D} \phi|V^{\frac{\tau+\lambda-\mu}{p}}\right]^{p} ~dv_{g}\right)^{\frac{1}{p}}
\left(\int_{M}\left[\frac{|\phi|^{p-1}}{\rho^{q-1}}|\nabla^{D} \phi|V^{\frac{(\tau+\lambda-\mu)(p-1)}{p}}\right]^{\frac{p}{p-1}} ~dv_{g}\right)^{\frac{p-1}{p}}\\
=&\left(\int_{M}|\nabla^{D} \phi|^{p}V^{\tau+\lambda-\mu} ~dv_{g}\right)^{\frac{1}{p}}
\left(\int_{M}\frac{|\phi|^{p}}{\rho^{\frac{p(q-1)}{p-1}}}V^{\tau+\lambda-\mu} ~dv_{g}\right)^{\frac{p-1}{p}}\\
\endaligned\end{equation}
and
\begin{equation}\label{05Sec2thm2-2}\aligned
&\int_{M}\frac{|\phi|^{p}}{\rho^{\frac{p(q-1)}{p-1}}}V^{\tau+\lambda-\mu} ~dv_{g}\\
=&\int_{M}\rho^{-\frac{p(q-1)}{p-1}}\left(|\phi|^{p}V^{\tau+\lambda-\mu}
\right)^{\frac{p(q-1)}{(p-1)q}}
\left(|\phi|^{p}V^{\tau+\lambda-\mu}\right)^{\left(1-\frac{p(q-1)}{(p-1)q}\right)} ~dv_{g}\\
\leq&\left(\int_{M}\left[\frac{\left(|\phi|^{p}V^{\tau+\lambda-\mu}\right)^{\frac{p(q-1)}{(p-1)q}}}
{\rho^{\frac{p(q-1)}{p-1}}}\right]^{\frac{(p-1)q}{p(q-1)}} ~dv_{g}\right)^{\frac{p(q-1)}{(p-1)q}}
\left(\int_{M}\left[\left(|\phi|^{p}V^{\tau+\lambda-\mu}\right)^{\left(1-\frac{p(q-1)}{(p-1)q}\right)}\right]^{\frac{1}{\left(1-\frac{p(q-1)}{(p-1)q}\right)}} ~dv_{g}\right)^{\left(1-\frac{p(q-1)}{(p-1)q}\right)}\\
=&\left(\int_{M}\frac{|\phi|^{p}V^{\tau+\lambda-\mu}}{\rho^{q}} ~dv_{g}\right)^{\frac{p(q-1)}{(p-1)q}}
\left(\int_{M}|\phi|^{p}V^{\tau+\lambda-\mu}~dv_{g}\right)^{\left(1-\frac{p(q-1)}{(p-1)q}\right)}.
\endaligned\end{equation}
Taking \eqref{05Sec2thm2-2} into \eqref{04Sec2thm2-2}, we have
\begin{equation}\label{06Sec2thm2-2}\aligned
&\int_{M}\frac{|\phi|^{p-1}}{\rho^{q-1}}|\nabla^{D} \phi|V^{\tau+\lambda-\mu} ~dv_{g}\\
\leq&\left(\int_{M}|\nabla^{D} \phi|^{p}V^{\tau+\lambda-\mu} ~dv_{g}\right)^{\frac{1}{p}}
\left(\int_{M}\frac{|\phi|^{p}V^{\tau+\lambda-\mu}}{\rho^{q}} ~dv_{g}\right)^{\frac{q-1}{q}}
\left(\int_{M}|\phi|^{p}V^{\tau+\lambda-\mu}~dv_{g}\right)^{\left(1-\frac{p(q-1)}{(p-1)q}\right)\frac{p-1}{p}}.
\endaligned\end{equation}
Then combining \eqref{06Sec2thm2-2} and \eqref{03Sec2thm2-2}, we obtain
$$\aligned
\left(\int_{M}\frac{|\phi|^{p}}{\rho^{q}}V^{\tau+\lambda-\mu} ~dv_{g}\right)^{\frac{1}{q}}
\leq\frac{p}{C+1-q}\left(\int_{M}|\nabla^{D} \phi|^{p}V^{\tau+\lambda-\mu} ~dv_{g}\right)^{\frac{1}{p}}
\left(\int_{M}|\phi|^{p}V^{\tau+\lambda-\mu}~dv_{g}\right)^{\left(1-\frac{p(q-1)}{(p-1)q}\right)\frac{p-1}{p}},
\endaligned$$
which is equivalent to
$$\aligned
\int_{M}\frac{|\phi|^{p}}{\rho^{q}}V^{\tau+\lambda-\mu}dv_{g}\leq
\left(\frac{p}{C+1-q}\right)^{q}\left(\int_{M}|\nabla^{D}\phi|^{p}V^{\tau+\lambda-\mu}dv_{g}\right)^{\frac{q}{p}}
\left(\int_{M}|\phi|^{p}V^{\tau+\lambda-\mu}dv_{g}\right)^{\frac{p-q}{p}}.
\endaligned$$
This completes the proof of Theorem \ref{thm2}.
\end{proof}

Under affine connections $D^{\lambda,\mu}$, we now prove an $L^{2}$-version two-weight Hardy type inequality with nonnegative remainder terms.

\begin{theorem}\label{thm4}
Let $(M^{n}, g,V=e^{u})$ be an n-dimensional Riemannian triple and
$\lambda,\mu\in \mathbb{R}$. Let $D=D^{\lambda,\mu}$ be the affine connection defined as in \eqref{01adsec02} and $\tau=(n+1)\lambda+\mu$. Let $a(x)$ and $b(x)$ be nonnegative weight functions in $M$. Let $\omega$ be a positive function satisfying the
differential inequality
\begin{equation}\label{ad0101Sec2thm4}\aligned
-V^{\tau}D_{i}\left(a(x)\nabla^{D}_{i}\omega\right)\geq b(x)\omega V^{\tau+\mu-\lambda}
\endaligned\end{equation}
almost everywhere in $M$. Then the following integral inequality
\begin{equation}\label{01Sec2thm4}\aligned
\int_{M}\big(a(x)|\nabla^{D}\phi|^{2}V^{\tau+\lambda-\mu}-b(x)\phi^{2}V^{\tau+\mu-\lambda}\big)~dv_{g}\geq
\int_{M}a(x)\omega^{2}\left|\nabla^{D}\left(\frac{\phi}{\omega}\right)\right|^{2} V^{\tau+\lambda-\mu}~dv_{g}
\endaligned\end{equation}
holds for any $\phi\in C^{\infty}_{0}(M)$.
\end{theorem}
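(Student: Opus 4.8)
The plan is to expand $|\nabla^{D}(\phi/\omega)|^{2}$ in terms of $\phi$, $\omega$ and their affine gradients, then recognise the cross term as an affine divergence so that the integration-by-parts identity \eqref{z3lem1} (equivalently \eqref{ad03Sec1}) converts it into a term involving $D_{i}(a(x)\nabla^{D}_{i}\omega)$, which is controlled from below by the hypothesis \eqref{ad0101Sec2thm4}. Concretely, write $\phi=\omega\,\psi$ with $\psi=\phi/\omega\in C^{\infty}_{0}(M)$, so that $\nabla^{D}\phi=\psi\,\nabla^{D}\omega+\omega\,\nabla^{D}\psi$ (the affine gradient is just $V^{\mu-\lambda}\nabla$, hence obeys the ordinary product rule). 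Squaring gives
\begin{equation}\label{01Sec2thm4proof}\aligned
|\nabla^{D}\phi|^{2}=\psi^{2}|\nabla^{D}\omega|^{2}+\omega^{2}|\nabla^{D}\psi|^{2}+2\omega\psi\langle\nabla^{D}\omega,\nabla^{D}\psi\rangle.
\endaligned\end{equation}
Since $2\omega\psi\,\nabla^{D}\psi=\nabla^{D}(\omega\psi^{2})-\psi^{2}\nabla^{D}\omega$, the last term equals $\langle\nabla^{D}\omega,\nabla^{D}(\omega\psi^{2})\rangle-\psi^{2}|\nabla^{D}\omega|^{2}$, so after multiplying by $a(x)V^{\tau+\lambda-\mu}$ we obtain the pointwise identity
\begin{equation}\label{02Sec2thm4proof}\aligned
a(x)|\nabla^{D}\phi|^{2}V^{\tau+\lambda-\mu}=a(x)\omega^{2}|\nabla^{D}\psi|^{2}V^{\tau+\lambda-\mu}+a(x)\langle\nabla^{D}\omega,\nabla^{D}(\omega\psi^{2})\rangle V^{\tau+\lambda-\mu}.
\endaligned\end{equation}

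Next I would integrate \eqref{02Sec2thm4proof} over $M$ and apply integration by parts to the middle term. Using \eqref{z3lem1}, $V^{\tau}D_{i}(a(x)\nabla^{D}_{i}\omega)=\nabla_{i}(V^{\tau}a(x)\nabla^{D}_{i}\omega)$ is a Riemannian divergence, so
\begin{equation}\label{03Sec2thm4proof}\aligned
\int_{M}a(x)\langle\nabla^{D}\omega,\nabla^{D}(\omega\psi^{2})\rangle V^{\tau+\lambda-\mu}~dv_{g}
&=\int_{M}a(x)\,\nabla^{D}_{i}\omega\,\nabla_{i}(\omega\psi^{2})\,V^{\tau}~dv_{g}\\
&=-\int_{M}\omega\psi^{2}\,\nabla_{i}\!\left(V^{\tau}a(x)\nabla^{D}_{i}\omega\right)~dv_{g}\\
&=-\int_{M}\omega\psi^{2}\,V^{\tau}D_{i}\!\left(a(x)\nabla^{D}_{i}\omega\right)~dv_{g}.
\endaligned\end{equation}
Here I am using $\langle\nabla^{D}\omega,\nabla^{D}(\omega\psi^{2})\rangle V^{\lambda-\mu}=\langle\nabla\omega,\nabla(\omega\psi^{2})\rangle V^{\mu-\lambda}$, which follows from \eqref{ad01Sec1}, to match the factor $V^{\tau}$ needed for \eqref{z3lem1}. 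The differential inequality \eqref{ad0101Sec2thm4} now gives $-V^{\tau}D_{i}(a(x)\nabla^{D}_{i}\omega)\geq b(x)\omega V^{\tau+\mu-\lambda}$ pointwise, and since $\omega>0$ and $\psi^{2}\geq0$ we may bound the right side of \eqref{03Sec2thm4proof} below by $\int_{M}b(x)\omega^{2}\psi^{2}V^{\tau+\mu-\lambda}~dv_{g}=\int_{M}b(x)\phi^{2}V^{\tau+\mu-\lambda}~dv_{g}$.

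Combining, $\int_{M}a(x)|\nabla^{D}\phi|^{2}V^{\tau+\lambda-\mu}~dv_{g}\geq\int_{M}a(x)\omega^{2}|\nabla^{D}\psi|^{2}V^{\tau+\lambda-\mu}~dv_{g}+\int_{M}b(x)\phi^{2}V^{\tau+\mu-\lambda}~dv_{g}$, which is exactly \eqref{01Sec2thm4} after recalling $\psi=\phi/\omega$. The main technical point to watch is the bookkeeping of the weight powers of $V$: one must keep consistent track of whether a factor is $V^{\tau}$, $V^{\tau+\lambda-\mu}$ or $V^{\tau+\mu-\lambda}$, and use \eqref{ad01Sec1} to trade $\langle\nabla^{D}\,\cdot\,,\nabla^{D}\,\cdot\,\rangle$ against $\langle\nabla\,\cdot\,,\nabla\,\cdot\,\rangle$ at the right moment so that Lemma \ref{lem2-2} applies. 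A secondary point is justifying the integration by parts with no boundary contribution, which is immediate since $\phi$ (hence $\psi$) is compactly supported and $\omega$, $a$ are smooth enough on the support; no further constraint on the affine Ricci curvature is needed. I do not expect any serious obstacle beyond this careful tracking of the conformal-type weight factors.
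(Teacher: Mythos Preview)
Your proposal is correct and follows essentially the same ground-state substitution $\phi=\omega\psi$ as the paper: expand the square, eliminate the $|\nabla\omega|^{2}$ term, integrate by parts via Lemma~\ref{lem2-2}, and invoke \eqref{ad0101Sec2thm4}. The only cosmetic differences are that you cancel the $\psi^{2}|\nabla^{D}\omega|^{2}$ term pointwise through the identity $2\omega\psi\,\nabla^{D}\psi=\nabla^{D}(\omega\psi^{2})-\psi^{2}\nabla^{D}\omega$ before integrating (the paper cancels it after one step of integration by parts), and you compute throughout with $\nabla^{D}$ whereas the paper computes with $\nabla$ and converts at the end.
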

\begin{proof}
Let $\phi=\omega\varphi$, where $\varphi\in C^{\infty}_{0}(M)$. Simple calculation can be obtained,
\begin{equation}\label{02Sec2thm4}\aligned
&|\nabla\phi|^{2}\\
=&|\omega\nabla\varphi+\varphi\nabla\omega|^{2}\\
=&\omega^{2}|\nabla\varphi|^{2}+\varphi^{2}|\nabla\omega|^{2}
+2\omega\varphi\langle\nabla\varphi,\nabla\omega\rangle\\
=&\omega^{2}|\nabla\varphi|^{2}+\varphi^{2}|\nabla\omega|^{2}+\omega\langle\nabla\omega,\nabla\varphi^{2}\rangle.
\endaligned\end{equation}
Multiplying both sides of \eqref{02Sec2thm4} by $a(x)V^{\tau+\mu-\lambda}$ and applying the integration by parts to the last term on the right hand side of the inequality over $M$ gives
$$\aligned
&\int_{M}a(x)|\nabla\phi|^{2}V^{\tau+\mu-\lambda}~dv_{g}\\
=&\int_{M}a(x)\varphi^{2}|\nabla\omega|^{2}V^{\tau+\mu-\lambda}~dv_{g}+\int_{M}a(x)\omega^{2}|\nabla\varphi|^{2}V^{\tau+\mu-\lambda}~dv_{g}
\\
&+\int_{M}a(x)\omega\langle\nabla\omega,\nabla\varphi^{2}\rangle V^{\tau+\mu-\lambda}~dv_{g}\\
=&\int_{M}a(x)\varphi^{2}|\nabla\omega|^{2}V^{\tau+\mu-\lambda}~dv_{g}+\int_{M}a(x)\omega^{2}|\nabla\varphi|^{2}V^{\tau+\mu-\lambda}~dv_{g}
\\
&-\int_{M}\varphi^{2}\nabla_{i}\left(a(x)\omega V^{\tau+\mu-\lambda}\nabla_{i}\omega\right)~dv_{g}\\
=&-\int_{M}\varphi^{2}\omega\nabla_{i}\left(a(x)V^{\tau+\mu-\lambda}\nabla_{i}\omega\right)~dv_{g}+\int_{M}a(x)\omega^{2}|\nabla\varphi|^{2}V^{\tau+\mu-\lambda}~dv_{g}\\
=&-\int_{M}\varphi^{2}\omega V^{\tau}D_{i}\left(a(x)\nabla^{D}_{i}\omega\right)~dv_{g}+\int_{M}a(x)\omega^{2}|\nabla\varphi|^{2}V^{\tau+\mu-\lambda}~dv_{g},
\endaligned$$
where, in the second and third inequalities, we use the integration by parts \eqref{ad03Sec1}.
Since $-V^{\tau}D_{i}\left(a(x)\nabla^{D}_{i}\omega\right)\geq b(x)\omega V^{\tau+\mu-\lambda}$ then we get
$$\aligned
&\int_{M}a(x)|\nabla\phi|^{2}V^{\tau+\mu-\lambda}~dv_{g}\\
\geq&\int_{M}b(x)\phi^{2}V^{\tau+\mu-\lambda}~dv_{g}+\int_{M}a(x)\omega^{2}|\nabla\varphi|^{2}V^{\tau+\mu-\lambda}~dv_{g}.
\endaligned$$
Since $\varphi=\frac{\phi}{\omega}$, we obtained the desired inequality
\begin{equation}\label{03Sec2thm4}\aligned
&\int_{M}a(x)|\nabla\phi|^{2}V^{\tau+\mu-\lambda}~dv_{g}=\int_{M}a(x)|\nabla^{D}\phi|^{2}V^{\tau+\lambda-\mu}~dv_{g}\\
\geq&\int_{M}b(x)\phi^{2}V^{\tau+\mu-\lambda}~dv_{g}
+\int_{M}a(x)\omega^{2}\left|\nabla\left(\frac{\phi}{\omega}\right)\right|^{2}V^{\tau+\mu-\lambda}~dv_{g}\\
=&\int_{M}b(x)\phi^{2}V^{\tau+\mu-\lambda}~dv_{g}
+\int_{M}a(x)\omega^{2}\left|\nabla^{D}\left(\frac{\phi}{\omega}\right)\right|^{2}V^{\tau+\lambda-\mu}~dv_{g}.
\endaligned\end{equation}
Reorganizing yields the desired equality \eqref{01Sec2thm4}. The proof of Theorem \ref{thm4} is completed.
\end{proof}
We now apply Theorem \ref{thm4} to recover previously known weighted Hardy type inequalities. For instance, by
considering the weight function $a(x)=\rho^{\alpha}$ and $\omega(x)=\rho^{-\left(\frac{C+\alpha-1}{2}\right)}$ in \eqref{01Sec2thm4}, we obtain the following weighted Hardy type inequality:

\begin{corollary}\label{cor01}
Let $(M^{n}, g,V=e^{u})$ be an n-dimensional Riemannian triple and
$\lambda,\mu\in \mathbb{R}$. Let $D=D^{\lambda,\mu}$ be the affine connection defined as in \eqref{01adsec02} and $\tau=(n+1)\lambda+\mu$. Let $\rho$ be a nonnegative function on $M$ such that $|\nabla^{D}\rho|= 1$ and $\Delta^{D}\rho\geq \frac{C}{\rho}V^{\lambda-\mu}
$ in the sense of distribution, where $C>1$, $C+\alpha-1>0$ and $\alpha\in \mathbb{R}$. Then the following inequality
\begin{equation}\label{01Sec2cor1}\aligned
\int_{M}\rho^{\alpha}|\nabla^{D}\phi|^{2}V^{\tau+\lambda-\mu}~dv_{g}\geq\left(\frac{C+\alpha-1}{2}\right)^{2}\int_{M}\rho^{\alpha-2}\phi^{2}V^{\tau+\lambda-\mu}~dv_{g}
\endaligned\end{equation}
holds for any $\phi\in C^{\infty}_{0}(M\setminus\rho^{-1}\{0\})$.
\end{corollary}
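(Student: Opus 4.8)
The plan is to realize \eqref{01Sec2cor1} as a special case of Theorem \ref{thm4} by choosing the two weights cleverly and then simply discarding the nonnegative remainder term. Set $\beta:=\frac{C+\alpha-1}{2}>0$, take $a(x)=\rho^{\alpha}$, $\omega=\rho^{-\beta}$, and propose $b(x)=\beta^{2}\rho^{\alpha-2}V^{2(\lambda-\mu)}$ as the second weight. All three are, respectively, nonnegative/positive/nonnegative on $M\setminus\rho^{-1}\{0\}$, which contains the support of every admissible $\phi$; moreover $\phi/\omega=\rho^{\beta}\phi\in C^{\infty}_{0}(M\setminus\rho^{-1}\{0\})$, so there is no difficulty in running (the proof of) Theorem \ref{thm4} on this open set. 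One computes $b(x)\phi^{2}V^{\tau+\mu-\lambda}=\beta^{2}\rho^{\alpha-2}\phi^{2}V^{\tau+\lambda-\mu}$, so once the structural hypothesis \eqref{ad0101Sec2thm4} is verified, Theorem \ref{thm4} together with the pointwise bound $a(x)\omega^{2}|\nabla^{D}(\phi/\omega)|^{2}\geq 0$ gives $\int_{M}\rho^{\alpha}|\nabla^{D}\phi|^{2}V^{\tau+\lambda-\mu}\,dv_{g}\geq\beta^{2}\int_{M}\rho^{\alpha-2}\phi^{2}V^{\tau+\lambda-\mu}\,dv_{g}$ after a trivial rearrangement, which is exactly \eqref{01Sec2cor1}.

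The substantive point is therefore verifying $-V^{\tau}D_{i}\big(\rho^{\alpha}\nabla^{D}_{i}\omega\big)\geq b(x)\,\omega\, V^{\tau+\mu-\lambda}$ in the distributional sense. Since $\nabla^{D}_{i}(\rho^{s})=s\rho^{s-1}\nabla^{D}_{i}\rho$ (immediate from $\nabla^{D}\phi=V^{\mu-\lambda}\nabla\phi$), one has $\rho^{\alpha}\nabla^{D}_{i}\omega=-\beta\rho^{s-1}\nabla^{D}_{i}\rho$ with $s:=\alpha-\beta=\frac{\alpha-C+1}{2}$. Assume first $s\neq0$. Then $\rho^{\alpha}\nabla^{D}_{i}\omega=-\frac{\beta}{s}\nabla^{D}_{i}(\rho^{s})$, so by the definition \eqref{ad01ad02Sec1} of the affine divergence, $-V^{\tau}D_{i}(\rho^{\alpha}\nabla^{D}_{i}\omega)=\frac{\beta}{s}V^{\tau}\Delta^{D}(\rho^{s})$. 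Running the same power-rule computation as in \eqref{03Sec2thm1} and using $|\nabla^{D}\rho|=1$ gives $\Delta^{D}(\rho^{s})=s\big((s-1)\rho^{s-2}V^{\lambda-\mu}+\rho^{s-1}\Delta^{D}\rho\big)$. Multiplying by $\frac{\beta}{s}$ and inserting $\Delta^{D}\rho\geq\frac{C}{\rho}V^{\lambda-\mu}$, the cases $s>0$ and $s<0$ are handled uniformly — the factor $s$ occurs both inside $\Delta^{D}(\rho^{s})$ (multiplying $\Delta^{D}\rho$) and in $\frac{\beta}{s}$, so the two sign changes cancel — and one always lands on $\frac{\beta}{s}V^{\tau}\Delta^{D}(\rho^{s})\geq\beta(s-1+C)\rho^{s-2}V^{\tau+\lambda-\mu}$. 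Finally the algebraic identity $s-1+C=\beta$ turns the right side into $\beta^{2}\rho^{s-2}V^{\tau+\lambda-\mu}=b(x)\,\omega\,V^{\tau+\mu-\lambda}$, as required.

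It remains to treat $s=0$, i.e.\ $\alpha=C-1$; this really does occur, since $C>1$ forces $\alpha=C-1>0$. Here $\beta=\alpha$, $\omega=\rho^{-\alpha}$, and $\rho^{s-1}\nabla^{D}_{i}\rho=\nabla^{D}_{i}(\ln\rho)$, so $-V^{\tau}D_{i}(\rho^{\alpha}\nabla^{D}_{i}\omega)=\alpha V^{\tau}\Delta^{D}(\ln\rho)$. Using the computation \eqref{009Sec2thm1} for $\Delta^{D}(\ln\rho)$ together with $\Delta^{D}\rho\geq\frac{C}{\rho}V^{\lambda-\mu}$ gives $\Delta^{D}(\ln\rho)\geq(C-1)\rho^{-2}V^{\lambda-\mu}$; multiplying by $\alpha>0$ and using $\alpha(C-1)=\alpha^{2}=\beta^{2}$ and $\rho^{-2}=\rho^{\alpha-2}\omega$ recovers the same bound. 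All of these steps are legitimate distributionally because every inequality is multiplied only by pointwise nonnegative quantities; to be fully rigorous one tests \eqref{ad0101Sec2thm4} against an arbitrary nonnegative $\varphi\in C^{\infty}_{0}(M\setminus\rho^{-1}\{0\})$ and invokes the distributional hypothesis on $\Delta^{D}\rho$ at that stage. \emph{The one place the argument can break} is precisely this sign bookkeeping when passing from the hypothesis on $\Delta^{D}\rho$ to the inequality for $\Delta^{D}(\rho^{s})$ in the case $s<0$: a careless multiplication by $\frac{\beta}{s}$ there would reverse the inequality, so that step deserves to be written out in full.
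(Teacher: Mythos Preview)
Your proposal is correct and follows essentially the same approach as the paper: both apply Theorem~\ref{thm4} with the identical choices $a(x)=\rho^{\alpha}$, $\omega=\rho^{-(C+\alpha-1)/2}$, and $b(x)=\left(\frac{C+\alpha-1}{2}\right)^{2}\rho^{\alpha-2}V^{2(\lambda-\mu)}$, verifying \eqref{ad0101Sec2thm4} and then dropping the nonnegative remainder. The only cosmetic difference is that the paper expands $-\mathrm{div}\big(V^{\tau+\mu-\lambda}\rho^{\alpha}\nabla\omega\big)=\beta\,\mathrm{div}\big(V^{\tau+\mu-\lambda}\rho^{\alpha-\beta-1}\nabla\rho\big)$ directly, which avoids your case split $s\neq0$ versus $s=0$ altogether, since the only outer multiplication is by $\beta>0$; your route through $\Delta^{D}(\rho^{s})$ works but is slightly more laborious.
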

\begin{proof}
By a straightforward computation, we obtain
\begin{equation}\label{02Sec2cor1}\aligned
&-V^{\tau}D_{i}\left(a(x)\nabla^{D}_{i}\omega\right)\\
=&-\mathrm{div}\left(V^{\tau+\mu-\lambda}a(x)\nabla\omega\right)\\
=&-\mathrm{div}\left(V^{\tau+\mu-\lambda}\rho^{\alpha}\nabla\rho^{-\left(\frac{C+\alpha-1}{2}\right)}\right)\\
=&-\mathrm{div}\left(V^{\tau+\mu-\lambda}\rho^{\alpha}\left(-\frac{C+\alpha-1}{2}\rho^{-\left(\frac{C+\alpha+1}{2}\right)}\nabla\rho\right)\right)\\
=&\frac{C+\alpha-1}{2}\mathrm{div}\left(V^{\tau+\mu-\lambda}\rho^{\alpha-\frac{C+\alpha+1}{2}}\nabla\rho\right)\\
=&\frac{C+\alpha-1}{2}\left((\tau+\mu-\lambda)V^{\tau+\mu-\lambda}\rho^{\alpha-\frac{C+\alpha+1}{2}}\langle\nabla u,\nabla\rho\rangle+V^{\tau+\mu-\lambda}\rho^{\alpha-\frac{C+\alpha+1}{2}}\Delta\rho\right)\\
&+\frac{C+\alpha-1}{2}\left(\alpha-\frac{C+\alpha+1}{2}\right)V^{\tau+\mu-\lambda}\rho^{\alpha-\frac{C+\alpha+1}{2}-1}|\nabla\rho|^{2}\\
=&\frac{C+\alpha-1}{2}\left(V^{\tau}\rho^{\alpha-\frac{C+\alpha+1}{2}}\Delta^{D}\rho+\left(\alpha-\frac{C+\alpha+1}{2}\right)V^{\tau+\lambda-\mu}\rho^{\alpha-\frac{C+\alpha+1}{2}-1}\right)\\
\geq&\frac{C+\alpha-1}{2}\left(\alpha-\frac{C+\alpha+1}{2}+C\right)V^{\tau+\lambda-\mu}\rho^{\alpha-\frac{C+\alpha+1}{2}-1}\\
=&\left(\frac{C+\alpha-1}{2}\right)^{2}V^{\tau+\lambda-\mu}\rho^{\alpha-2}\rho^{-\left(\frac{C+\alpha-1}{2}\right)}\\
=&\left(\frac{C+\alpha-1}{2}\right)^{2}\rho^{\alpha-2}V^{2(\lambda-\mu)}\omega V^{\tau+\mu-\lambda}.
\endaligned\end{equation}
Since \eqref{ad0101Sec2thm4}, we get
\begin{equation}\label{03Sec2cor1}\aligned
b(x)=\left(\frac{C+\alpha-1}{2}\right)^{2}\rho^{\alpha-2}V^{2(\lambda-\mu)}.
\endaligned\end{equation}
By \eqref{01Sec2thm4} , we have
\begin{equation}\label{04Sec2cor1}\aligned
\int_{M}a(x)|\nabla^{D}\phi|^{2}V^{\tau+\lambda-\mu}~dv_{g}\geq\int_{M}b(x)\phi^{2}V^{\tau+\mu-\lambda}~dv_{g}.
\endaligned\end{equation}
Inserting \eqref{03Sec2cor1} into \eqref{04Sec2cor1}, we obtain
$$\aligned
\int_{M}\rho^{\alpha}|\nabla^{D}\phi|^{2}V^{\tau+\lambda-\mu}~dv_{g}\geq\left(\frac{C+\alpha-1}{2}\right)^{2}\int_{M}\rho^{\alpha-2}\phi^{2}V^{\tau+\lambda-\mu}~dv_{g}.
\endaligned$$
This completes the proof of Corollary \ref{cor01}.
\end{proof}

\begin{remark} \label{Re:1}
Particularly, when $\alpha=\gamma=0$, then our Corollary \ref{cor01} with respect to the affine connection $D^{\lambda,\mu}$ reduces to the Theorem 1.4 of Carron with respect to a Levi-Civita connection of $ g$ in \cite{Carron1997}.
\end{remark}

We now choose the weight function $a(x)=\rho^{\alpha}$ and $\omega(x)=\left(1+\rho^{2}\right)^{-\left(\frac{C+\alpha-1}{2}\right)}$ in \eqref{01Sec2thm4}. Then we obtain the
following weighted Hardy type inequality:

\begin{corollary}\label{cor02}
Under the same conditions of Corollary \ref{cor01}, let $\rho$ be a nonnegative function on $M$ such that $|\nabla^{D}\rho|= 1$ and $\Delta^{D}\rho\geq \frac{C}{\rho}V^{\lambda-\mu}
$ in the sense of
distribution, where $C>1$, $C+\alpha-1>0$ and $\alpha\in \mathbb{R}$. Then the following inequality
\begin{equation}\label{01Sec2cor2}\aligned
\int_{M}\rho^{\alpha}|\nabla^{D}\phi|^{2}V^{\tau+\lambda-\mu}~dv_{g}\geq(C+\alpha-1)(C+\alpha+1)\int_{M}\rho^{\alpha}\frac{|\phi|^{2}}{\left(1+\rho^{2}\right)^{2}}V^{\tau+\lambda-\mu}~dv_{g}
\endaligned\end{equation}
holds for any $\phi\in C^{\infty}_{0}(M\setminus\rho^{-1}\{0\})$.
\end{corollary}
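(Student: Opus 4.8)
The plan is to apply Theorem~\ref{thm4} directly with the choices $a(x)=\rho^{\alpha}$ and $\omega(x)=\bigl(1+\rho^{2}\bigr)^{-\left(\frac{C+\alpha-1}{2}\right)}$, exactly as in the proof of Corollary~\ref{cor01} but with the logarithmically-type weight replaced by the bounded weight $(1+\rho^2)^{-(\cdot)}$. The only computational task is to verify the differential inequality \eqref{ad0101Sec2thm4}, i.e.\ to compute $-V^{\tau}D_{i}\bigl(\rho^{\alpha}\nabla^{D}_{i}\omega\bigr)$ and show it is bounded below by $b(x)\,\omega\, V^{\tau+\mu-\lambda}$ for the appropriate $b(x)$, and then read off from \eqref{01Sec2thm4} the stated inequality (discarding the nonnegative remainder term $\int_{M}a(x)\omega^{2}|\nabla^{D}(\phi/\omega)|^{2}V^{\tau+\lambda-\mu}\,dv_{g}\geq 0$).

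First I would rewrite $-V^{\tau}D_{i}\bigl(a(x)\nabla^{D}_{i}\omega\bigr)=-\mathrm{div}\bigl(V^{\tau+\mu-\lambda}\rho^{\alpha}\nabla\omega\bigr)$ using Lemma~\ref{lem2-2}, then differentiate $\omega$: since $\nabla\omega=-(C+\alpha-1)\,\rho\,(1+\rho^2)^{-\left(\frac{C+\alpha+1}{2}\right)}\nabla\rho$, we get
\begin{equation*}
-\mathrm{div}\Bigl(V^{\tau+\mu-\lambda}\rho^{\alpha}\nabla\omega\Bigr)
=(C+\alpha-1)\,\mathrm{div}\Bigl(V^{\tau+\mu-\lambda}\,\rho^{\alpha+1}(1+\rho^2)^{-\left(\frac{C+\alpha+1}{2}\right)}\nabla\rho\Bigr).
\end{equation*}
Expanding the divergence, the terms involving $\nabla u$ and $\Delta\rho$ recombine via \eqref{ad02Sec1} into $V^{\tau}\rho^{\alpha+1}(1+\rho^2)^{-\left(\frac{C+\alpha+1}{2}\right)}\Delta^{D}\rho$, and using $|\nabla^D\rho|=1$ (hence $|\nabla\rho|^2=V^{2(\lambda-\mu)}$) together with $\Delta^{D}\rho\geq \frac{C}{\rho}V^{\lambda-\mu}$ one bounds this below by a multiple of $V^{\tau+\lambda-\mu}$ times an explicit function of $\rho$. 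The remaining terms come from differentiating $\rho^{\alpha+1}$ and $(1+\rho^2)^{-\left(\frac{C+\alpha+1}{2}\right)}$, producing $(\alpha+1)\rho^{\alpha}(1+\rho^2)^{-\left(\frac{C+\alpha+1}{2}\right)}|\nabla\rho|^2$ and $-(C+\alpha+1)\rho^{\alpha+2}(1+\rho^2)^{-\left(\frac{C+\alpha+3}{2}\right)}|\nabla\rho|^2$ respectively.

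The main obstacle — really the only nonroutine step — is the algebraic bookkeeping needed to show that after collecting all three contributions the lower bound factors cleanly as $(C+\alpha-1)(C+\alpha+1)\,\rho^{\alpha}(1+\rho^2)^{-2}\cdot\omega$ (up to the volume-weight factors), so that $b(x)=(C+\alpha-1)(C+\alpha+1)\,\rho^{\alpha}(1+\rho^2)^{-2}V^{2(\lambda-\mu)}$. The point is to write $\rho^{\alpha}(1+\rho^2)^{-\left(\frac{C+\alpha+1}{2}\right)}=\rho^{\alpha}(1+\rho^2)^{-2}\cdot(1+\rho^2)^{-\left(\frac{C+\alpha-3}{2}\right)}$ and note $(1+\rho^2)^{-\left(\frac{C+\alpha-3}{2}\right)}=(1+\rho^2)\cdot\omega$, then check that the bracket $\bigl[C(1+\rho^2)+(\alpha+1)(1+\rho^2)-(C+\alpha+1)\rho^2\bigr]$ — coming from the three terms after factoring out $(C+\alpha-1)\rho^{\alpha}(1+\rho^2)^{-\left(\frac{C+\alpha+3}{2}\right)}V^{\tau+\lambda-\mu}$ — collapses to exactly $C+\alpha+1$. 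Once this identity is confirmed, \eqref{ad0101Sec2thm4} holds with the claimed $b(x)$, and substituting into \eqref{04Sec2cor1} (the consequence of \eqref{01Sec2thm4} obtained by dropping the remainder term, together with $\omega^2 V^{-2(\lambda-\mu)}\cdot b(x)=(C+\alpha-1)(C+\alpha+1)\rho^{\alpha}(1+\rho^2)^{-2}$ absorbed into $V^{\tau+\lambda-\mu}$) yields \eqref{01Sec2cor2}, completing the proof.
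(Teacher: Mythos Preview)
Your proposal is correct and follows essentially the same route as the paper: choose $a(x)=\rho^{\alpha}$ and $\omega(x)=(1+\rho^{2})^{-\left(\frac{C+\alpha-1}{2}\right)}$, compute $-V^{\tau}D_{i}(a\nabla^{D}_{i}\omega)$ by expanding the divergence and regrouping the $\nabla u$ and $\Delta\rho$ terms into $\Delta^{D}\rho$, apply the hypotheses $|\nabla^{D}\rho|=1$ and $\Delta^{D}\rho\geq \frac{C}{\rho}V^{\lambda-\mu}$, and then verify the algebraic collapse to $(C+\alpha-1)(C+\alpha+1)\rho^{\alpha}(1+\rho^{2})^{-2}\omega\,V^{\tau+\lambda-\mu}$. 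The only cosmetic difference is that the paper simplifies by writing $\rho^{\alpha+2}=\rho^{\alpha}(\rho^{2}+1-1)$ and cancelling, whereas you factor out $(1+\rho^{2})^{-\left(\frac{C+\alpha+3}{2}\right)}$ first and reduce the bracket $C(1+\rho^{2})+(\alpha+1)(1+\rho^{2})-(C+\alpha+1)\rho^{2}=C+\alpha+1$; both are the same identity.
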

\begin{proof}
By a straightforward computation, we obtain
\begin{equation}\label{02Sec2cor2}\aligned
&-V^{\tau}D_{i}\left(a(x)\nabla^{D}_{i}\omega\right)\\
=&-\mathrm{div}\left(V^{\tau+\mu-\lambda}a(x)\nabla\omega\right)\\
=&-\mathrm{div}\left(V^{\tau+\mu-\lambda}\rho^{\alpha}\nabla\left(1+\rho^{2}\right)^{-\left(\frac{C+\alpha-1}{2}\right)}\right)\\
=&-\mathrm{div}\left(V^{\tau+\mu-\lambda}\rho^{\alpha}\left(-\frac{C+\alpha-1}{2}\left(1+\rho^{2}\right)^{-\left(\frac{C+\alpha+1}{2}\right)-1}2\rho\nabla\rho\right)\right)\\
=&(C+\alpha-1)\mathrm{div}\left(V^{\tau+\mu-\lambda}\rho^{\alpha+1}\left(1+\rho^{2}\right)^{-\left(\frac{C+\alpha-1}{2}\right)-1}\nabla\rho\right)\\
=&(C+\alpha-1)\left(V^{\tau}\rho^{\alpha+1} \left(1+\rho^{2}\right)^{-\left(\frac{C+\alpha-1}{2}\right)-1}\Delta^{D}\rho
+(\alpha+1)V^{\tau+\mu-\lambda}\rho^{\alpha}|\nabla\rho|^{2}\left(1+\rho^{2}\right)^{-\left(\frac{C+\alpha-1}{2}\right)-1}\right)\\
&-(C+\alpha-1)(C+\alpha+1)V^{\tau+\mu-\lambda}\rho^{\alpha+2}|\nabla\rho|^{2}\left(1+\rho^{2}\right)^{-\left(\frac{C+\alpha-1}{2}\right)-2}\\
=&(C+\alpha-1)\left(V^{\tau}\rho^{\alpha+1} \left(1+\rho^{2}\right)^{-\left(\frac{C+\alpha-1}{2}\right)-1}\Delta^{D}\rho
+(\alpha+1)V^{\tau+\lambda-\mu}\rho^{\alpha}|\nabla^{D}\rho|^{2}\left(1+\rho^{2}\right)^{-\left(\frac{C+\alpha-1}{2}\right)-1}\right)\\
&-(C+\alpha-1)(C+\alpha+1)V^{\tau+\lambda-\mu}\rho^{\alpha+2}|\nabla^{D}\rho|^{2}\left(1+\rho^{2}\right)^{-\left(\frac{C+\alpha-1}{2}\right)-2}.
\endaligned\end{equation}
Since $|\nabla^{D}\rho|=1$ and $\Delta^{D}\rho\geq \frac{C}{\rho}V^{\lambda-\mu}
$, we get
\begin{equation}\label{01ad03Sec2cor2}\aligned
&-V^{\tau}D_{i}\left(a(x)\nabla^{D}_{i}\omega\right)\\
\geq&(C+\alpha-1)\left(CV^{\tau+\lambda-\mu}\rho^{\alpha} \left(1+\rho^{2}\right)^{-\left(\frac{C+\alpha-1}{2}\right)-1}
+(\alpha+1)V^{\tau+\lambda-\mu}\rho^{\alpha}\left(1+\rho^{2}\right)^{-\left(\frac{C+\alpha-1}{2}\right)-1}\right)\\
&-(C+\alpha-1)(C+\alpha+1)V^{\tau+\lambda-\mu}\rho^{\alpha+2}\left(1+\rho^{2}\right)^{-\left(\frac{C+\alpha-1}{2}\right)-2}\\
=&(C+\alpha-1)(C+\alpha+1)V^{\tau+\lambda-\mu}\rho^{\alpha} \left(1+\rho^{2}\right)^{-\left(\frac{C+\alpha+1}{2}\right)}\\
&-(C+\alpha-1)(C+\alpha+1)V^{\tau+\lambda-\mu}\rho^{\alpha+2}\left(1+\rho^{2}\right)^{-\left(\frac{C+\alpha-1}{2}\right)-2}\\
=&(C+\alpha-1)(C+\alpha+1)V^{\tau+\lambda-\mu}\rho^{\alpha} \left(1+\rho^{2}\right)^{-\left(\frac{C+\alpha-1}{2}\right)-1}\\
&-(C+\alpha-1)(C+\alpha+1)V^{\tau+\lambda-\mu}\rho^{\alpha}\left(\rho^{2}+1-1\right)\left(1+\rho^{2}\right)^{-\left(\frac{C+\alpha-1}{2}\right)-2}\\
=&(C+\alpha-1)(C+\alpha+1)V^{\tau+\lambda-\mu}\rho^{\alpha}\left(1+\rho^{2}\right)^{-\left(\frac{C+\alpha-1}{2}\right)-2}\\
=&(C+\alpha-1)(C+\alpha+1)V^{\tau+\lambda-\mu}\rho^{\alpha}\left(1+\rho^{2}\right)^{-2}\left(1+\rho^{2}\right)^{-\left(\frac{C+\alpha-1}{2}\right)}\\
=&(C+\alpha-1)(C+\alpha+1)V^{\tau+\lambda-\mu}\rho^{\alpha}\left(1+\rho^{2}\right)^{-2}\omega.
\endaligned\end{equation}
Since \eqref{ad0101Sec2thm4}, we get
\begin{equation}\label{03Sec2cor2}\aligned
b(x)=(C+\alpha-1)(C+\alpha+1)\rho^{\alpha}\left(1+\rho^{2}\right)^{-2}V^{2(\alpha-\gamma)}.
\endaligned\end{equation}
Inserting \eqref{03Sec2cor2} into \eqref{04Sec2cor1}, we obtain
$$\aligned
\int_{M}\rho^{\alpha}|\nabla^{D}\phi|^{2}V^{\tau+\lambda-\mu}~dv_{g}\geq(C+\alpha-1)(C+\alpha+1)\int_{M}\rho^{\alpha}\frac{|\phi|^{2}}{\left(1+\rho^{2}\right)^{2}}V^{\tau+\lambda-\mu}~dv_{g}.
\endaligned$$
This completes the proof of Corollary \ref{cor02}.
\end{proof}

\begin{remark} \label{Re:2}
In particular, if $\alpha=\gamma=0$ and $p=2$, then our Corollary \ref{cor02} with respect to the affine connection $D^{\lambda,\mu}$ reduces to the Corollary 2.3 with respect to a Levi-Civita connection of $ g$ in \cite{Kombe2016}.

The following Hardy type inequality in the Euclidean setting has been proved by Ghoussoub and Moradifam \cite{Ghoussoub2011}:
$$\aligned
\int_{\mathbb{R}}\frac{\left(c+d|x|^{\alpha}\right)^{\beta}}{|x|^{2m}}|\nabla \phi|^{2}~dx \geq\left(\frac{n-2m-2}{2}\right)^{2} \int_{\mathbb{R}^{n}} \frac{\left(c+d|x|^{\alpha}\right)^{\beta}}{|x|^{2m+2}} \phi^{2} ~dx,
\endaligned$$
where $\phi\in C^{\infty}_{0}\left(\mathbb{R}^{n}\right) , c, d> 0, \alpha\beta> 0$ and $m\leq \frac {n-2}{2}.$
\end{remark}

We now choose the pair as
$$\aligned
a\left(x\right)=\frac{\left(c+d\rho^{\alpha}\right)^{\beta}}{\rho^{2m}}\quad\mathrm{and}\quad \omega\left(x\right)=\rho^{-\left(\frac{C-2m-1}{2}\right)}.
\endaligned$$
Then we have the following Hardy type inequality:

\begin{corollary}\label{cor03}
Under the same conditions of Corollary \ref{cor01}, let $\rho$  be a nonnegative function on $M$ such that $|\nabla^{D}\rho|=1$ and $\Delta^{D}\rho\geq \frac{C}{\rho}V^{\lambda-\mu}$ in the sense of distribution, where $C>1$ and $C-2m-1\geq0$. Then the following inequality
\begin{equation}\label{01Sec2cor3}\aligned
\int_{M}\frac{(c+d\rho^{\alpha})^{\beta}}{\rho^{2m}}|\nabla^{D}\phi|^{2}V^{\tau+\lambda-\mu}~dv_{g}\geq\left(\frac{C-2m-1}{2}\right)^{2}\int_{M}\frac{(c+d\rho^{\alpha})^{\beta}}{\rho^{2m+2}}\phi^{2}V^{\tau+\lambda-\mu}~dv_{g}
\endaligned\end{equation}
holds for any $\phi\in C_{0}^{\infty}\left(M\setminus\rho^{-1}\{0\}\right)$. Here $c, d>0, \alpha\beta>0$.
\end{corollary}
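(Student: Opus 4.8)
The plan is to apply Theorem~\ref{thm4} with the specified choice of weights, exactly as was done in Corollaries~\ref{cor01} and~\ref{cor02}, so the only real work is to verify the differential inequality~\eqref{ad0101Sec2thm4} for the pair
$$
a(x)=\frac{(c+d\rho^{\alpha})^{\beta}}{\rho^{2m}},\qquad \omega(x)=\rho^{-\left(\frac{C-2m-1}{2}\right)},
$$
and to identify the resulting function $b(x)$. First I would use the identity $-V^{\tau}D_{i}(a(x)\nabla^{D}_{i}\omega)=-\mathrm{div}\big(V^{\tau+\mu-\lambda}a(x)\nabla\omega\big)$ (which follows from~\eqref{ad01Sec1} and~\eqref{z3lem1}, precisely as in~\eqref{02Sec2cor1}) to reduce everything to a Riemannian divergence computation. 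Writing $\nabla\omega=-\tfrac{C-2m-1}{2}\,\rho^{-\left(\frac{C-2m+1}{2}\right)}\nabla\rho$, I would expand
$$
a(x)\nabla\omega=-\frac{C-2m-1}{2}\,(c+d\rho^{\alpha})^{\beta}\,\rho^{-2m-\left(\frac{C-2m+1}{2}\right)}\nabla\rho,
$$
and then compute the divergence, collecting the three types of terms that arise: the one hitting $V^{\tau+\mu-\lambda}$ via $\langle\nabla u,\nabla\rho\rangle$ together with the one hitting $\Delta\rho$ (these combine into a multiple of $V^{\tau}\Delta^{D}\rho$, using~\eqref{ad02Sec1}), the one differentiating the power $\rho^{-2m-(C-2m+1)/2}$, and the one differentiating the factor $(c+d\rho^{\alpha})^{\beta}$.

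The key observation that makes the statement clean is that the troublesome cross term coming from $\nabla(c+d\rho^{\alpha})^{\beta}=\beta d\alpha\,\rho^{\alpha-1}(c+d\rho^{\alpha})^{\beta-1}\nabla\rho$ carries an overall factor $\frac{C-2m-1}{2}\cdot\beta d\alpha$ with a definite sign; since $c,d>0$ and $\alpha\beta>0$ we have $\beta d\alpha>0$, and since $C-2m-1\ge 0$ this term is $\le 0$ once the outer sign from $-\mathrm{div}$ is taken into account, so it can simply be \emph{discarded} to obtain a lower bound. After dropping it, the remaining terms are exactly those of the pure-power computation in~\eqref{02Sec2cor1} but multiplied through by $(c+d\rho^{\alpha})^{\beta}$: applying $|\nabla^{D}\rho|=1$ and $\Delta^{D}\rho\ge\frac{C}{\rho}V^{\lambda-\mu}$ as in~\eqref{02Sec2cor1}, one gets
$$
-V^{\tau}D_{i}\big(a(x)\nabla^{D}_{i}\omega\big)\ \ge\ \left(\frac{C-2m-1}{2}\right)^{2}\frac{(c+d\rho^{\alpha})^{\beta}}{\rho^{2m+2}}\,V^{2(\lambda-\mu)}\,\omega\, V^{\tau+\mu-\lambda},
$$
so~\eqref{ad0101Sec2thm4} holds with $b(x)=\left(\frac{C-2m-1}{2}\right)^{2}(c+d\rho^{\alpha})^{\beta}\rho^{-2m-2}V^{2(\lambda-\mu)}$.

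Finally I would feed this $b(x)$ into~\eqref{01Sec2thm4} (discarding the nonnegative remainder term), noting that the factor $V^{2(\lambda-\mu)}$ in $b(x)$ turns $b(x)\phi^{2}V^{\tau+\mu-\lambda}$ into $\left(\frac{C-2m-1}{2}\right)^{2}(c+d\rho^{\alpha})^{\beta}\rho^{-2m-2}\phi^{2}V^{\tau+\lambda-\mu}$, which is precisely the right-hand side of~\eqref{01Sec2cor3}. The main obstacle is purely bookkeeping: correctly tracking the exponent $-2m-\tfrac{C-2m+1}{2}$ through the product rule and checking that the power-of-$\rho$ terms assemble into the constant $\left(\frac{C-2m-1}{2}\right)(\,C-\tfrac{C-2m+1}{2}-2m\,)=\left(\frac{C-2m-1}{2}\right)^{2}$; the sign argument for dropping the $(c+d\rho^{\alpha})$-derivative term, which genuinely uses both hypotheses $\alpha\beta>0$ and $C-2m-1\ge0$, is the only conceptual point and is straightforward once isolated.
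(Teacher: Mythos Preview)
Your approach is exactly the paper's: apply Theorem~\ref{thm4} with $a(x)=(c+d\rho^{\alpha})^{\beta}\rho^{-2m}$ and $\omega(x)=\rho^{-(C-2m-1)/2}$, expand $-\mathrm{div}(V^{\tau+\mu-\lambda}a\nabla\omega)$, combine the $\langle\nabla u,\nabla\rho\rangle$ and $\Delta\rho$ contributions into $V^{\tau}\Delta^{D}\rho$, and drop the cross term arising from $\nabla(c+d\rho^{\alpha})^{\beta}$ using $\alpha\beta>0$ and $C-2m-1\ge 0$. One small slip to fix: after the two sign flips (one from $\nabla\omega$, one from the outer $-\mathrm{div}$), that cross term in $-V^{\tau}D_{i}(a\nabla^{D}_{i}\omega)$ is \emph{nonnegative}, not $\le 0$; it is precisely because it is $\ge 0$ that discarding it yields a \emph{lower} bound, which is what \eqref{ad0101Sec2thm4} needs.
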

\begin{proof}
By a straightforward computation, we obtain
\begin{equation}\label{02Sec2cor3}\aligned
&-V^{\tau}D_{i}\left(a(x)\nabla^{D}_{i}\omega\right)\\
=&-\mathrm{div}\left(V^{\tau+\mu-\lambda}a(x)\nabla\omega\right)\\
=&-\mathrm{div}\left(V^{\tau+\mu-\lambda}\frac{(c+d\rho^{\alpha})^{\beta}}{\rho^{2m}}\nabla\rho^{-\frac{C-2m-1}{2}}\right)\\
=&\mathrm{div}\left(V^{\tau+\mu-\lambda}\frac{(c+d\rho^{\alpha})^{\beta}}{\rho^{2m}}\frac{C-2m-1}{2}\rho^{-\left(\frac{C-2m-1}{2}\right)-1}\nabla\rho\right)\\
=&\frac{C-2m-1}{2}(\tau+\mu-\lambda)V^{\tau+\mu-\lambda}
\frac{(c+d\rho^{\alpha})^{\beta}}{\rho^{2m}}\rho^{-\left(\frac{C-2m-1}{2}\right)-1}\langle\nabla u,\nabla\rho\rangle\\
&+\frac{C-2m-1}{2}\frac{\beta\left(c+d\rho^{\alpha}\right)^{\beta-1}}{\rho^{2m}}\alpha d\rho^{\alpha-1}\rho^{-\left(\frac{C-2m-1}{2}\right)-1}|\nabla\rho|^{2}V^{\tau+\mu-\lambda}\\
&-\frac{2m(C-2m-1)}{2}\frac{(c+d\rho^{\alpha})^{\beta}}{\rho^{2m+1}}|\nabla\rho|^{2}\rho^{-\left(\frac{C-2m-1}{2}\right)-1}V^{\tau+\mu-\lambda}\\
&+\frac{C-2m-1}{2}\frac{(c+d\rho^{\alpha})^{\beta}}{\rho^{2m}}\left(-\frac{C-2m+1}{2}\right)\rho^{-\left(\frac{C-2m-1}{2}\right)-2}|\nabla\rho|^{2}V^{\tau+\mu-\lambda}\\
&+\frac{C-2m-1}{2}\frac{(c+d\rho^{\alpha})^{\beta}}{\rho^{2m}}\rho^{-\left(\frac{C-2m-1}{2}\right)-1}V^{\tau+\mu-\lambda}\Delta\rho\\
=&\frac{C-2m-1}{2}\frac{\beta\left(c+d\rho^{\alpha}\right)^{\beta-1}}{\rho^{2m}}\alpha d\rho^{\alpha-1}\rho^{-\left(\frac{C-2m-1}{2}\right)-1}|\nabla^{D}\rho|^{2}V^{\tau+\lambda-\mu}\\
&-\frac{2m(C-2m-1)}{2}\frac{(c+d\rho^{\alpha})^{\beta}}{\rho^{2m+1}}|\nabla^{D}\rho|^{2}\rho^{-\left(\frac{C-2m-1}{2}\right)-1}V^{\tau+\lambda-\mu}\\
&+\frac{C-2m-1}{2}\frac{(c+d\rho^{\alpha})^{\beta}}{\rho^{2m}}\left(-\frac{C-2m+1}{2}\right)\rho^{-\left(\frac{C-2m-1}{2}\right)-2}|\nabla^{D}\rho|^{2}V^{\tau+\lambda-\mu}\\
&+\frac{C-2m-1}{2}\frac{(c+d\rho^{\alpha})^{\beta}}{\rho^{2m}}\rho^{-\left(\frac{C-2m-1}{2}\right)-1}V^{\tau}\Delta^{D}\rho.
\endaligned\end{equation}
Since $|\nabla^{D}\rho|=1$ and $\Delta^{D}\rho\geq \frac{C}{\rho}V^{\lambda-\mu}
$, we have
\begin{equation}\label{01ad02Sec2cor3}\aligned
&-V^{\tau}D_{i}\left(a(x)\nabla^{D}_{i}\omega\right)\\
\geq&\left(\frac{C-2m-1}{2}\right)^{2}\frac{(c+d\rho^{\alpha})^{\beta}}{\rho^{2m+2}} \rho^{-\left(\frac{C-2m-1}{2}\right)}V^{\tau+\lambda-\mu}.
\endaligned\end{equation}
Since \eqref{ad0101Sec2thm4}, we get
\begin{equation}\label{03Sec2cor3}\aligned
b(x)=\left(\frac{C-2m-1}{2}\right)^{2}\frac{\left(c+d\rho^{\alpha}\right)^{\beta}}{\rho^{2m+2}}V^{2(\alpha-\gamma)}.
\endaligned\end{equation}
Inserting \eqref{03Sec2cor3} into \eqref{04Sec2cor1}, we obtain
$$\aligned
\int_{M}\frac{(c+d\rho^{\alpha})^{\beta}}{\rho^{2m}}|\nabla^{D}\phi|^{2}V^{\tau+\lambda-\mu}~dv_{g}\geq\left(\frac{C-2m-1}{2}\right)^{2}\int_{M}\frac{(c+d\rho^{\alpha})^{\beta}}{\rho^{2m+2}}\phi^{2}V^{\tau+\lambda-\mu}~dv_{g}.
\endaligned$$
This completes the proof of Corollary \ref{cor03}.
\end{proof}

\begin{remark} \label{Re:3}
In particular, if $\alpha=\gamma=0$, then our Corollary \ref{cor03} with respect to the affine connection $D^{\lambda,\mu}$ reduces to the Corollary 2.4 with respect to a Levi-Civita connection of $ g$ in \cite{Kombe2016}.
\end{remark}

Another application of Theorem \ref{thm4} with the special functions $a(x)=\left(1+\rho^{2}\right)^{\alpha}$ and $\omega(x)=\left(1+\rho^{2}\right)^{1-\alpha}$ leads to the following weighted Hardy type inequality:

\begin{corollary}\label{cor04}
Under the same conditions of Corollary \ref{cor01}, let $\rho$  be a nonnegative function on $M$ such that $|\nabla^{D}\rho|= 1$ and $\Delta^{D}\rho\geq \frac{C}{\rho}V^{\lambda-\mu}$ in the sense of distribution, where $C>1$. Then, for any $\phi\in C^{\infty}_{0}\left(M\setminus\rho^{-1}\{0\}\right)$ and $\alpha>1$, we have
\begin{equation}\label{01Sec2cor4}\aligned
\int_{M}\frac{|\nabla^{D}\phi|^{2}}{\left(1+\rho^{2}\right)^{-\alpha}}V^{\tau+\lambda-\mu}~dv_{g}\geq2\left(\alpha-1\right)(C+1)\int_{M}\frac{|\phi|^{2}}{\left(1+\rho^{2}\right)^{1-\alpha}}V^{\tau+\lambda-\mu}~dv_{g}.
\endaligned\end{equation}
\end{corollary}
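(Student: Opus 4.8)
The plan is to apply Theorem \ref{thm4} with the weight functions $a(x)=\left(1+\rho^{2}\right)^{\alpha}$ and $\omega(x)=\left(1+\rho^{2}\right)^{1-\alpha}$, exactly in the spirit of the proofs of Corollaries \ref{cor01}--\ref{cor03}. First I would compute $\nabla\omega=2(1-\alpha)\rho\left(1+\rho^{2}\right)^{-\alpha}\nabla\rho$, so that the convenient cancellation $a(x)\nabla\omega=2(1-\alpha)\rho\nabla\rho$ occurs (this is precisely why these exponents are chosen). Using the divergence identity \eqref{z3lem1} of Lemma \ref{lem2-2} as in the earlier corollaries, one then has
\[
-V^{\tau}D_{i}\left(a(x)\nabla^{D}_{i}\omega\right)=-\mathrm{div}\left(V^{\tau+\mu-\lambda}a(x)\nabla\omega\right)=2(\alpha-1)\,\mathrm{div}\left(V^{\tau+\mu-\lambda}\rho\nabla\rho\right).
\]

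Next I would expand $\mathrm{div}\left(V^{\tau+\mu-\lambda}\rho\nabla\rho\right)=V^{\tau+\mu-\lambda}\bigl[|\nabla\rho|^{2}+\rho\Delta\rho+(\tau+\mu-\lambda)\rho\langle\nabla u,\nabla\rho\rangle\bigr]$. The key bookkeeping point is that $\tau+\mu-\lambda=2\mu+n\lambda$, so by \eqref{ad02Sec1} the bracket equals $|\nabla\rho|^{2}+\rho V^{\lambda-\mu}\Delta^{D}\rho$. Invoking $|\nabla^{D}\rho|=1$ (equivalently $|\nabla\rho|^{2}=V^{2(\lambda-\mu)}$) together with $\Delta^{D}\rho\geq\frac{C}{\rho}V^{\lambda-\mu}$, and using $\alpha>1$ so that $2(\alpha-1)>0$, this gives
\[
-V^{\tau}D_{i}\left(a(x)\nabla^{D}_{i}\omega\right)\geq 2(\alpha-1)(C+1)V^{\tau+\lambda-\mu}=b(x)\,\omega\,V^{\tau+\mu-\lambda},
\]
where $b(x)=2(\alpha-1)(C+1)\left(1+\rho^{2}\right)^{\alpha-1}V^{2(\lambda-\mu)}$; that is, the differential inequality \eqref{ad0101Sec2thm4} holds for this pair.

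Finally, feeding $a$, $b$ and $\omega$ into \eqref{01Sec2thm4} and discarding the nonnegative remainder term $\int_{M}a(x)\omega^{2}\left|\nabla^{D}\left(\frac{\phi}{\omega}\right)\right|^{2}V^{\tau+\lambda-\mu}\,dv_{g}$ yields $\int_{M}a(x)|\nabla^{D}\phi|^{2}V^{\tau+\lambda-\mu}\,dv_{g}\geq\int_{M}b(x)\phi^{2}V^{\tau+\mu-\lambda}\,dv_{g}$; since $a(x)=\bigl((1+\rho^{2})^{-\alpha}\bigr)^{-1}$ and $b(x)V^{\tau+\mu-\lambda}=2(\alpha-1)(C+1)\frac{1}{(1+\rho^{2})^{1-\alpha}}V^{\tau+\lambda-\mu}$, this is exactly \eqref{01Sec2cor4}. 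I expect no genuine obstacle: the whole argument reduces to Theorem \ref{thm4}, and the only thing requiring care is matching the exponents of $V$ and of $\left(1+\rho^{2}\right)$ while correctly recognizing the affine Laplacian via $\tau+\mu-\lambda=2\mu+n\lambda$; the hypothesis $\alpha>1$ is used precisely to keep the constant $2(\alpha-1)(C+1)$ positive.
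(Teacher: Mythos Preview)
Your proposal is correct and follows essentially the same approach as the paper's own proof: both choose $a(x)=(1+\rho^{2})^{\alpha}$ and $\omega(x)=(1+\rho^{2})^{1-\alpha}$, exploit the cancellation $a\nabla\omega=2(1-\alpha)\rho\nabla\rho$, compute $\mathrm{div}\bigl(V^{\tau+\mu-\lambda}\rho\nabla\rho\bigr)$ via the affine Laplacian, and then feed the resulting $b(x)=2(\alpha-1)(C+1)(1+\rho^{2})^{\alpha-1}V^{2(\lambda-\mu)}$ into Theorem~\ref{thm4}. Your explicit identification $\tau+\mu-\lambda=2\mu+n\lambda$ is in fact slightly more transparent than the paper's write-up.
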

\begin{proof}
By a straightforward computation, we obtain
\begin{equation}\label{02Sec2cor4}\aligned
&-V^{\tau}D\left(a(x)\nabla^{D}\omega\right)\\
=&-\mathrm{div}\left(V^{\tau+\mu-\lambda}a(x)\nabla\omega\right)\\
=&-\mathrm{div}\left(V^{\tau+\mu-\lambda}\left(1+\rho^{2}\right)^{\alpha}\nabla\left(1+\rho^{2}\right)^{1-\alpha}\right)\\
=&-\mathrm{div}\left(2V^{\tau+\mu-\lambda}\left(1+\rho^{2}\right)^{\alpha}(1-\alpha)\left(1+\rho^{2}\right)^{-\alpha}\rho\nabla\rho\right)\\
=&2(\alpha-1)\mathrm{div}\left(V^{\tau+\mu-\lambda}\rho\nabla\rho\right)\\
=&2(\alpha-1)\left((\tau+\mu-\lambda)V^{\tau+\mu-\lambda}\rho\langle\nabla u,\nabla\rho\rangle+V^{\tau+\mu-\lambda}|\nabla\rho|^{2}+V^{\tau+\mu-\lambda}\rho\Delta\rho\right)\\
=&2(\alpha-1)\left(\rho V^{\tau}\Delta^{D}\rho+V^{\tau+\lambda-\mu}\right)\\
\geq&2(\alpha-1)(C+1)V^{\tau+\lambda-\mu}\\
=&2(\alpha-1)(C+1)\left(1+\rho^{2}\right)^{1-\alpha}\left(1+\rho^{2}\right)^{\alpha-1}V^{\tau+\lambda-\mu}.
\endaligned\end{equation}
Since \eqref{ad0101Sec2thm4}, we get
\begin{equation}\label{03Sec2cor4}\aligned
b(x)=2(\alpha-1)(C+1)\left(1+\rho^{2}\right)^{\alpha-1}V^{2(\lambda-\mu)}.
\endaligned\end{equation}
Inserting \eqref{03Sec2cor4} into \eqref{04Sec2cor1}, we obtain
$$\aligned
\int_{M}\frac{|\nabla^{D}\phi|^{2}}{\left(1+\rho^{2}\right)^{-\alpha}}V^{\tau+\lambda-\mu}~dv_{g}\geq2\left(\alpha-1\right)(C+1)\int_{M}\frac{|\phi|^{2}}{\left(1+\rho^{2}\right)^{1-\alpha}}V^{\tau+\lambda-\mu}~dv_{g}.
\endaligned$$
This completes the proof of Corollary \ref{cor04}.
\end{proof}

\section{Rellich-type inequalities and their proofs}

Some Rellich type inequalities with respect to $D^{\lambda,\mu}$ on  Riemannian manifolds will be derived in this section. We first prove the following weighted Rellich inequality with respect to $D^{\lambda,\mu}$ on $M$.
\begin{theorem}\label{thm6}(Weighted Rellich inequality). Let $(M^{n}, g,V=e^{u})$ be an n-dimensional Riemannian triple and
$\lambda,\mu\in \mathbb{R}$. Let $D=D^{\lambda,\mu}$ be the affine connection defined as in \eqref{01adsec02} and $\tau=(n+1)\lambda+\mu$.
Let $\rho$ be a nonnegative functions on $M$ such that $|\nabla^{D}\rho|=1$ and $\Delta^{D}\rho\geq\frac{C}{\rho}V^{\lambda-\mu}$ in the sense of distribution where $C>1$, $\alpha<2$ and $C+\alpha-3>0$. Then the following inequality holds:
\begin{equation}\label{01Sec3thm6}\aligned
\int_{M}\rho^{\alpha}(\Delta^{D}\phi)^{2}V^{\tau+\mu-\lambda}~dv_{g}\geq\frac{(C+\alpha-3)^{2}(C-\alpha+1)^{2}}{16}\int_{M}\rho^{\alpha-4}\phi^{2}V^{\tau+\lambda-\mu}~dv_{g}
\endaligned\end{equation}
for any $\phi \in C_{0}^{\infty}(M\setminus\rho^{-1}\{0\})$.
\end{theorem}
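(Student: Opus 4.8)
The plan is to obtain \eqref{01Sec3thm6} by dualizing the affine $L^2$ Hardy inequality of Theorem \ref{thm1}(ii) against the Cauchy--Schwarz inequality, as in the classical Rellich argument. Abbreviate
\[
R:=\int_{M}\rho^{\alpha}(\Delta^{D}\phi)^{2}V^{\tau+\mu-\lambda}\,dv_{g},\qquad
I:=\int_{M}\rho^{\alpha-4}\phi^{2}V^{\tau+\lambda-\mu}\,dv_{g}.
\]
Distributing the $V$--weights symmetrically, Cauchy--Schwarz gives
\[
\left|\int_{M}\rho^{\alpha-2}\phi\,\Delta^{D}\phi\,V^{\tau}\,dv_{g}\right|\le R^{1/2}I^{1/2},
\]
so it suffices to prove the one-sided bound
\[
\int_{M}\rho^{\alpha-2}\phi\,\Delta^{D}\phi\,V^{\tau}\,dv_{g}\le-\frac{(C+\alpha-3)(C-\alpha+1)}{4}\,I ,
\]
after which squaring yields $R\ge\frac{1}{16}(C+\alpha-3)^{2}(C-\alpha+1)^{2}I$. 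Here I use that the hypotheses $C>1$ and $\alpha<2$ force both $C+\alpha-3>0$ (assumed) and $C-\alpha+1>C-1>0$, so the right-hand side above is strictly negative and the passage from the signed bound to the absolute-value bound is legitimate.

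To establish the one-sided bound I would apply the affine integration by parts \eqref{ad03Sec1} with $\omega=\rho^{\alpha-2}\phi$, expand $\nabla(\rho^{\alpha-2}\phi)$, and use the conversions $\nabla^{D}=V^{\mu-\lambda}\nabla$ together with $\phi\langle\nabla\rho,\nabla\phi\rangle=\tfrac12\langle\nabla\rho,\nabla(\phi^{2})\rangle$ to reach
\[
\int_{M}\rho^{\alpha-2}\phi\,\Delta^{D}\phi\,V^{\tau}\,dv_{g}
=-\int_{M}\rho^{\alpha-2}|\nabla^{D}\phi|^{2}V^{\tau+\lambda-\mu}\,dv_{g}
-\frac{\alpha-2}{2}\int_{M}\rho^{\alpha-3}\langle\nabla\rho,\nabla(\phi^{2})\rangle V^{\tau+\mu-\lambda}\,dv_{g}.
\]
For the last integral I would write $\rho^{\alpha-3}V^{\mu-\lambda}\nabla\rho=\tfrac{1}{\alpha-2}\nabla^{D}(\rho^{\alpha-2})$ (valid since $\alpha<2$), integrate by parts once more via \eqref{ad03Sec1}, and insert the identity $\Delta^{D}(\rho^{\alpha-2})=(\alpha-2)\big((\alpha-3)\rho^{\alpha-4}V^{\lambda-\mu}+\rho^{\alpha-3}\Delta^{D}\rho\big)$ already recorded in \eqref{03Sec2thm1}. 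The assumption $\Delta^{D}\rho\ge\frac{C}{\rho}V^{\lambda-\mu}$ and $C+\alpha-3>0$ then give
\[
\int_{M}\rho^{\alpha-3}\langle\nabla\rho,\nabla(\phi^{2})\rangle V^{\tau+\mu-\lambda}\,dv_{g}\le-(C+\alpha-3)\,I ,
\]
and since $\alpha<2$, multiplying by the positive constant $-\tfrac{\alpha-2}{2}$ preserves this inequality and contributes $-\tfrac{(2-\alpha)(C+\alpha-3)}{2}I$ to the estimate.

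The surviving term $\int_{M}\rho^{\alpha-2}|\nabla^{D}\phi|^{2}V^{\tau+\lambda-\mu}\,dv_{g}$ is precisely the left-hand side of Theorem \ref{thm1}(ii) with the exponent $\alpha$ replaced by $\alpha-2$; the admissibility condition of that theorem becomes exactly $C+\alpha-3>0$, so it provides the lower bound $\big(\tfrac{C+\alpha-3}{2}\big)^{2}I$. Combining the two contributions,
\[
\int_{M}\rho^{\alpha-2}\phi\,\Delta^{D}\phi\,V^{\tau}\,dv_{g}
\le-(C+\alpha-3)\left(\frac{2-\alpha}{2}+\frac{C+\alpha-3}{4}\right)I
=-\frac{(C+\alpha-3)(C-\alpha+1)}{4}\,I ,
\]
which is the desired one-sided bound, and feeding it back into Cauchy--Schwarz completes the proof. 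The main point requiring care is not any single estimate but the systematic bookkeeping of the powers of $V$, so that each integration by parts is genuinely an instance of \eqref{ad03Sec1} (or of Lemma \ref{lem2-2}) rather than of the ordinary Riemannian formula; once the weights are matched, the sign analysis---using $\alpha<2$ for the positivity of $-\tfrac{\alpha-2}{2}$ and of $C-\alpha+1$, and $C+\alpha-3>0$ for the applicability of Theorem \ref{thm1}(ii)---is routine.
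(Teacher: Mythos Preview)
Your proposal is correct and follows essentially the same route as the paper: both derive the key one-sided bound $-\int_M\rho^{\alpha-2}\phi\,\Delta^D\phi\,V^{\tau}\,dv_g\ge\frac{(C+\alpha-3)(C-\alpha+1)}{4}I$ by integrating by parts, using $\Delta^D\rho\ge\frac{C}{\rho}V^{\lambda-\mu}$ on $\Delta^D(\rho^{\alpha-2})$, and then invoking the affine Hardy inequality with exponent $\alpha-2$. The only cosmetic differences are that the paper quotes Corollary~\ref{cor01} rather than Theorem~\ref{thm1}(ii) for the Hardy step, and finishes with Young's inequality (with the optimal parameter) instead of Cauchy--Schwarz; these are equivalent, and your Cauchy--Schwarz version is in fact a bit cleaner.
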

\begin{proof}
Since $|\nabla^{D}\rho|=1$ and $\Delta^{D}\rho\geq \frac{C}{\rho}V^{\lambda-\mu}$, by direct computation, we have
$$\aligned
V^{\tau}\Delta^{D}\rho^{\alpha-2}
=&V^{\tau}V^{\mu-\lambda}\left(\Delta\rho^{\alpha-2}+(2\mu+n\lambda)\langle\nabla u,\nabla\rho^{\alpha-2}\rangle\right)\\
=&V^{\tau+\mu-\lambda}\left[\mathrm{div}\left((\alpha-2)\rho^{\alpha-3}\nabla\rho\right)+(2\mu+n\lambda)\langle\nabla u,(\alpha-2)\rho^{\alpha-3}\nabla\rho\rangle\right]\\
=&V^{\tau+\mu-\lambda}\left[(\alpha-2)(\alpha-3)\rho^{\alpha-4}|\nabla\rho|^{2}+(\alpha-2)\rho^{\alpha-3}\Delta\rho
+(2\mu+n\lambda)(\alpha-2)\rho^{\alpha-3}\langle\nabla u,\nabla\rho\rangle\right]\\
=&V^{\tau+\lambda-\mu}\left[(\alpha-2)(\alpha-3)\rho^{\alpha-4}|\nabla^{D}\rho|^{2}+(\alpha-2)\rho^{\alpha-3}\Delta^{D}\rho\right]\\
=&(\alpha-2)V^{\tau}\left[(\alpha-3)\rho^{\alpha-4}V^{\lambda-\mu}+\rho^{\alpha-3}\Delta^{D}\rho\right]\\
\leq&(\alpha-2)(\alpha-3+C)\rho^{\alpha-4}V^{\tau+\lambda-\mu}.
\endaligned$$
Multiplying both sides of above inequality by $\phi^{2}$ and integrating over $M$, we can obtain
\begin{equation}\label{02Sec3thm6}\aligned
(\alpha-2)(\alpha-3+C)\int_{M}\rho^{\alpha-4}\phi^{2}V^{\tau+\lambda-\mu} ~dv_{g}\geq\int_{M}\phi^{2}V^{\tau}\Delta^{D}\rho^{\alpha-2} ~dv_{g}.
\endaligned\end{equation}
Using the integration by parts again, we can get
\begin{equation}\label{03Sec3thm6}\aligned
&\int_{M}\phi^{2}V^{\tau}\Delta^{D}\rho^{\alpha-2} ~dv_{g}\\
=&-\int_{M}\langle\nabla\phi^{2},\nabla^{D}\rho^{\alpha-2}\rangle V^{\tau} ~dv_{g}\\
=&\int_{M}\rho^{\alpha-2}V^{\tau}\Delta^{D}\phi^{2} ~dv_{g}\\
=&\int_{M}V^{\mu-\lambda}\rho^{\alpha-2}\left(\Delta\phi^{2}+(2\mu+n\lambda)\langle\nabla u,\nabla\phi^{2}\rangle\right)V^{\tau} ~dv_{g}\\
=&\int_{M}V^{\mu-\lambda}\rho^{\alpha-2}\left[2\left(\phi\Delta\phi+|\nabla\phi|^{2}\right)+(2\mu+n\lambda)2\phi\langle\nabla u,\nabla\phi\rangle\right]V^{\tau} ~dv_{g}\\
=&2\int_{M}\rho^{\alpha-2}\left(\phi\Delta^{D}\phi+|\nabla^{D}\phi|^{2}V^{\lambda-\mu}\right)V^{\tau} ~dv_{g}.
\endaligned\end{equation}
Taking \eqref{03Sec3thm6} into \eqref{02Sec3thm6}, we obtain
\begin{equation}\label{04Sec3thm6}\aligned
-\int_{M}\rho^{\alpha-2}\left(\phi\Delta^{D}\phi\right)V^{\tau} ~dv_{g}
\geq&\int_{M}\rho^{\alpha-2}|\nabla^{D}\phi|^{2}V^{\tau+\lambda-\mu} ~dv_{g}\\
&-\frac{(\alpha-2)(\alpha-3+C)}{2}\int_{M}\rho^{\alpha-4}\phi^{2}V^{\tau+\lambda-\mu} ~dv_{g}.
\endaligned\end{equation}
By \eqref{01Sec2cor1} in Corollary \ref{cor01}, we have
\begin{equation}\label{05Sec3thm6}\aligned
\int_{M}\rho^{\alpha-2}|\nabla^{D}\phi|^{2}V^{\tau+\lambda-\mu}~dv_{g}\geq\left(\frac{C+\alpha-3}{2}\right)^{2}\int_{M}\rho^{\alpha-4}\phi^{2}V^{\tau+\lambda-\mu}~dv_{g}
\endaligned\end{equation}
Inserting \eqref{05Sec3thm6} into \eqref{04Sec3thm6}, we have
\begin{equation}\label{06Sec3thm6}\aligned
&-\int_{M}\rho^{\alpha-2}\left(\phi\Delta^{D}\phi\right)V^{\tau} ~dv_{g}\\
\geq&\frac{\left(C+\alpha-3\right)^{2}}{2}\int_{M}\rho^{\alpha-4}\phi^{2}V^{\tau+\lambda-\mu}~dv_{g}
-\frac{(\alpha-2)(\alpha-3+C)}{2}\int_{M}\rho^{\alpha-4}\phi^{2}V^{\tau+\lambda-\mu} ~dv_{g}\\
=&\frac{\left(C+\alpha-3\right)\left(C-\alpha+1\right)}{2}\int_{M}\rho^{\alpha-4}\phi^{2}V^{\tau+\lambda-\mu}~dv_{g}.
\endaligned\end{equation}
Using Young's inequality in \eqref{06Sec3thm6}, we have
\begin{equation}\label{07Sec3thm6}\aligned
-\int_{M}\rho^{\alpha-2}\left(\phi\Delta^{D}\phi\right)V^{\tau} ~dv_{g}
\leq&\frac{\left(C+\alpha-3\right)\left(C-\alpha+1\right)}{4}\int_{M}\rho^{\alpha-4}\phi^{2}V^{\tau+\lambda-\mu}~dv_{g}\\
&+\frac{4}{\left(C+\alpha-3\right)\left(C-\alpha+1\right)}\int_{M}\rho^{\alpha}\left(\Delta^{D}\phi\right)^{2}V^{\tau+\mu-\lambda}~dv_{g}.
\endaligned\end{equation}
Then, combining \eqref{06Sec3thm6} and \eqref{07Sec3thm6}, we know that
$$\aligned
&\frac{4}{\left(C+\alpha-3\right)\left(C-\alpha+1\right)}\int_{M}\rho^{\alpha}\left(\Delta^{D}\phi\right)^{2}V^{\tau+\mu-\lambda}~dv_{g}\\
\geq&\frac{\left(C+\alpha-3\right)\left(C-\alpha+1\right)}{4}\int_{M}\rho^{\alpha-4}\phi^{2}V^{\tau+\lambda-\mu}~dv_{g}.
\endaligned$$
Thus, we have
\begin{equation}\label{08Sec3thm6}\aligned
\int_{M}\rho^{\alpha}\left(\Delta^{D}\phi\right)^{2}V^{\tau+\mu-\lambda}~dv_{g}
\geq\frac{\left(C+\alpha-3\right)^{2}\left(C-\alpha+1\right)^{2}}{16}\int_{M}\rho^{\alpha-4}\phi^{2}V^{\tau+\lambda-\mu}~dv_{g}.
\endaligned\end{equation}
\end{proof}

Then, we would like to give two weighted Rellich type inequalities with respect to $D^{\lambda,\mu}$ on $M$, which connect the first and the second order derivatives, as follows.
\begin{theorem}\label{thm7}
Let $(M^{n}, g,V=e^{u})$ be an n-dimensional Riemannian triple and
$\lambda,\mu\in \mathbb{R}$. Let $D=D^{\lambda,\mu}$ be the affine connection defined as in \eqref{01adsec02} and $\tau=(n+1)\lambda+\mu$.
Let $\rho$ be a nonnegative function on $M$ such that $|\nabla^{D}\rho|=1$ in the sense of distributions.
Then for any $\phi\in C^{\infty}_{0} (M\backslash\rho^{-1}\{0\})$, we have

$\mathrm{(i)}$\quad When $\Delta^{D}\rho\leq \frac{C}{\rho}V^{\lambda-\mu}$ in the sense of distributions, where $C<1$ is a constant,
for $2<\alpha<3-C$, the following inequality
\begin{equation}\label{01Sec3thm7-1}\aligned
\int_{M}\rho^{\alpha}\left(\Delta^{D}\phi\right)^{2}V^{\tau+\mu-\lambda} ~dv_{g}
\geq2(\alpha-2)(3-\alpha-C)
\int_{M}\rho^{\alpha-2}|\nabla^{D}\phi|^{2}V^{\tau+\lambda-\mu} ~dv_{g}
\endaligned\end{equation}
holds.

$\mathrm{(ii)}$\quad When $\Delta^{D}\rho\geq \frac{C}{\rho}V^{\lambda-\mu}$ in the sense of distributions, where $C>1$ is a constant,
for $\frac{7-C}{3}<\alpha<2$, the following inequality
\begin{equation}\label{02Sec3thm7-2}\aligned
\int_{M}\rho^{\alpha}\left(\Delta^{D}\phi\right)^{2}V^{\tau+\mu-\lambda} ~dv_{g}
\geq\frac{(C-\alpha+1)^{2}}{4}
\int_{M}\rho^{\alpha-2}|\nabla^{D}\phi|^{2}V^{\tau+\lambda-\mu} ~dv_{g}
\endaligned\end{equation}
holds.
\end{theorem}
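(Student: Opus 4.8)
The plan is to follow the proof of Theorem~\ref{thm6} but to halt one step earlier, keeping the Dirichlet‑type term $\int_{M}\rho^{\alpha-2}|\nabla^{D}\phi|^{2}V^{\tau+\lambda-\mu}\,dv_{g}$ instead of pushing it, via a Hardy inequality, all the way down to $\int_{M}\rho^{\alpha-4}\phi^{2}V^{\tau+\lambda-\mu}\,dv_{g}$. The common opening move is the pointwise bound
\[
V^{\tau}\Delta^{D}\rho^{\alpha-2}=(\alpha-2)V^{\tau}\bigl[(\alpha-3)\rho^{\alpha-4}V^{\lambda-\mu}+\rho^{\alpha-3}\Delta^{D}\rho\bigr]\le(\alpha-2)(\alpha-3+C)\rho^{\alpha-4}V^{\tau+\lambda-\mu},
\]
established exactly as at the start of the proof of Theorem~\ref{thm6}; the two hypothesis sets of the present theorem ($\Delta^{D}\rho\le\frac{C}{\rho}V^{\lambda-\mu}$ with $\alpha>2$ in (i), $\Delta^{D}\rho\ge\frac{C}{\rho}V^{\lambda-\mu}$ with $\alpha<2$ in (ii)) produce the same inequality, because in each case we multiply by the positive factor $\rho^{\alpha-3}$ and then by $\alpha-2$, the sign of the latter compensating the reversed curvature inequality. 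Multiplying by $\phi^{2}\ge0$, integrating over $M$ and integrating by parts twice through \eqref{ad03Sec1} (using $\Delta^{D}(\phi^{2})=2\phi\Delta^{D}\phi+2|\nabla^{D}\phi|^{2}V^{\lambda-\mu}$) then reproduces inequality \eqref{04Sec3thm6} in both cases, namely
\[
-\int_{M}\rho^{\alpha-2}\phi\,\Delta^{D}\phi\,V^{\tau}\,dv_{g}\ \ge\ \int_{M}\rho^{\alpha-2}|\nabla^{D}\phi|^{2}V^{\tau+\lambda-\mu}\,dv_{g}+\frac{(\alpha-2)(3-\alpha-C)}{2}\int_{M}\rho^{\alpha-4}\phi^{2}V^{\tau+\lambda-\mu}\,dv_{g},
\]
with $\frac{(\alpha-2)(3-\alpha-C)}{2}>0$ on both parameter ranges; for (ii) one may quote \eqref{04Sec3thm6} verbatim, its hypotheses being exactly those of Theorem~\ref{thm6}.

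The next step is to estimate the left‑hand side by Young's inequality with a parameter $\varepsilon>0$, splitting $\rho^{\alpha-2}V^{\tau}=\bigl(\rho^{\alpha/2}V^{(\tau+\mu-\lambda)/2}\bigr)\bigl(\rho^{(\alpha-4)/2}V^{(\tau+\lambda-\mu)/2}\bigr)$ to get
\[
-\int_{M}\rho^{\alpha-2}\phi\,\Delta^{D}\phi\,V^{\tau}\,dv_{g}\le\frac{\varepsilon}{2}\int_{M}\rho^{\alpha}(\Delta^{D}\phi)^{2}V^{\tau+\mu-\lambda}\,dv_{g}+\frac{1}{2\varepsilon}\int_{M}\rho^{\alpha-4}\phi^{2}V^{\tau+\lambda-\mu}\,dv_{g}.
\]
For part~(i) I would take $\varepsilon=\bigl[(\alpha-2)(3-\alpha-C)\bigr]^{-1}>0$, which is available because $2<\alpha<3-C$; the two $\rho^{\alpha-4}\phi^{2}$ integrals then cancel and what survives is $\frac{\varepsilon}{2}\int_{M}\rho^{\alpha}(\Delta^{D}\phi)^{2}V^{\tau+\mu-\lambda}\,dv_{g}\ge\int_{M}\rho^{\alpha-2}|\nabla^{D}\phi|^{2}V^{\tau+\lambda-\mu}\,dv_{g}$, i.e. \eqref{01Sec3thm7-1}.

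For part~(ii), the hypotheses $C>1$ and $\alpha>\frac{7-C}{3}>3-C$ (the second inequality because $C>1$) force $C+\alpha-3>0$, so Corollary~\ref{cor01} with exponent $\alpha-2$ — equivalently, inequality \eqref{05Sec3thm6} — gives $\int_{M}\rho^{\alpha-4}\phi^{2}V^{\tau+\lambda-\mu}\,dv_{g}\le\frac{4}{(C+\alpha-3)^{2}}\int_{M}\rho^{\alpha-2}|\nabla^{D}\phi|^{2}V^{\tau+\lambda-\mu}\,dv_{g}$. Substituting this bound into the two inequalities above — legitimate as long as $\varepsilon$ is large enough that $\frac{(\alpha-2)(3-\alpha-C)}{2}-\frac{1}{2\varepsilon}<0$, so that enlarging $\int_{M}\rho^{\alpha-4}\phi^{2}$ preserves the direction — leaves
\[
\frac{\varepsilon}{2}\int_{M}\rho^{\alpha}(\Delta^{D}\phi)^{2}V^{\tau+\mu-\lambda}\,dv_{g}\ge\Bigl[1+\Bigl(\tfrac{(\alpha-2)(3-\alpha-C)}{2}-\tfrac{1}{2\varepsilon}\Bigr)\tfrac{4}{(C+\alpha-3)^{2}}\Bigr]\int_{M}\rho^{\alpha-2}|\nabla^{D}\phi|^{2}V^{\tau+\lambda-\mu}\,dv_{g}.
\]
Viewing the resulting coefficient of $\int_{M}\rho^{\alpha-2}|\nabla^{D}\phi|^{2}V^{\tau+\lambda-\mu}\,dv_{g}$ as a function of $s=1/\varepsilon$, it is a downward‑opening quadratic whose maximum works out to $\frac{(C-\alpha+1)^{2}}{4}$, attained at $\varepsilon=\frac{4}{(C-\alpha+1)(C+\alpha-3)}$; this yields \eqref{02Sec3thm7-2}.

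The step I expect to demand the most care is this optimization in part~(ii): one must check that the maximizing $\varepsilon$ is positive — true since $\alpha<2$ and $C>1$ give $C-\alpha+1>0$ — and that it keeps $\frac{(\alpha-2)(3-\alpha-C)}{2}-\frac{1}{2\varepsilon}<0$, a condition which, after a short computation, turns out to be exactly equivalent to $\alpha>\frac{7-C}{3}$. Thus the interval $\frac{7-C}{3}<\alpha<2$ is precisely what makes the argument simultaneously valid and sharp, while part~(i) is routine once \eqref{04Sec3thm6} is in hand.
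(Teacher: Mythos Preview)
Your argument is correct. Part~(i) is identical to the paper's. In part~(ii) you take a slightly different route: after reaching \eqref{04Sec3thm6} and applying Young's inequality, you invoke the \emph{Hardy} bound of Corollary~\ref{cor01} (equivalently \eqref{05Sec3thm6}), estimating $\int_{M}\rho^{\alpha-4}\phi^{2}V^{\tau+\lambda-\mu}\,dv_{g}$ from above by $\frac{4}{(C+\alpha-3)^{2}}\int_{M}\rho^{\alpha-2}|\nabla^{D}\phi|^{2}V^{\tau+\lambda-\mu}\,dv_{g}$. The paper instead feeds in the full \emph{Rellich} inequality \eqref{01Sec3thm6}, bounding the same integral by $\frac{16}{(C+\alpha-3)^{2}(C-\alpha+1)^{2}}\int_{M}\rho^{\alpha}(\Delta^{D}\phi)^{2}V^{\tau+\mu-\lambda}\,dv_{g}$, and then optimizes. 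Both optimizations yield exactly $\frac{(C-\alpha+1)^{2}}{4}$, and both require precisely $\alpha>\frac{7-C}{3}$ for the optimizing $\varepsilon$ to lie in the admissible range; your computation that the sign condition on $\frac{(\alpha-2)(3-\alpha-C)}{2}-\frac{1}{2\varepsilon}$ is equivalent to $\alpha>\frac{7-C}{3}$ is correct (it reduces to $c<d$ with $c=\frac{(2-\alpha)(C+\alpha-3)}{2}$, $d=\frac{(C+\alpha-3)^{2}}{4}$). Your path is marginally more self-contained since it avoids Theorem~\ref{thm6} as a prerequisite, while the paper's route makes the connection to the Rellich constant explicit; the paper also runs through two additional cases ($B<0$ and $B=0$) that, on closer inspection, do not actually occur for $\frac{7-C}{3}<\alpha<2$, so your single-case treatment is cleaner.
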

\begin{proof}
$\mathrm{(i)}$\quad Since $\alpha-2>0$, $|\nabla^{D}\rho|=1$ and $\Delta^{D}\rho\leq \frac{C}{\rho}V^{\lambda-\mu}$, by direct computation, we have
$$\aligned
\Delta^{D}\left(\rho^{\alpha-2}\right)V^{\tau}
=&V^{\tau}V^{\mu-\lambda}\left(\Delta\rho^{\alpha-2}+(2\mu+n\lambda)\langle\nabla u,\nabla\rho^{\alpha-2}\rangle\right)\\
=&V^{\tau+\mu-\lambda}\left[\mathrm{div}\left((\alpha-2)\rho^{\alpha-3}\nabla\rho\right)+(2\mu+n\lambda)\langle\nabla u,(\alpha-2)\rho^{\alpha-3}\nabla\rho\rangle\right]\\
=&V^{\tau+\mu-\lambda}\left[(\alpha-2)(\alpha-3)\rho^{\alpha-4}|\nabla\rho|^{2}+(\alpha-2)\rho^{\alpha-3}\Delta\rho
+(2\mu+n\lambda)(\alpha-2)\rho^{\alpha-3}\langle\nabla u,\nabla\rho\rangle\right]\\
=&V^{\tau+\lambda-\mu}\left[(\alpha-2)(\alpha-3)\rho^{\alpha-4}|\nabla^{D}\rho|^{2}+(\alpha-2)\rho^{\alpha-3}\Delta^{D}\rho\right]\\
=&(\alpha-2)V^{\tau}\left[(\alpha-3)\rho^{\alpha-4}V^{\lambda-\mu}+\rho^{\alpha-3}\Delta^{D}\rho\right]\\
\leq&(\alpha-2)(\alpha-3+C)\rho^{\alpha-4}V^{\tau+\lambda-\mu}.
\endaligned$$
Multiplying both sides of above inequality by $\phi^{2}$ and integrating over $M$, we can obtain
\begin{equation}\label{02Sec3thm7-1}\aligned
(\alpha-2)(\alpha-3+C)\int_{M}\rho^{\alpha-4}\phi^{2}V^{\tau+\lambda-\mu} ~dv_{g}\geq\int_{M}\phi^{2}\Delta^{D}\left(\rho^{\alpha-2}\right)V^{\tau} ~dv_{g}.
\endaligned\end{equation}
From similar calculations in Theorem \ref{thm6}, we can get the following result:
\begin{equation}\label{03Sec3thm7-1}\aligned
\int_{M}\phi^{2}\Delta^{D}\left(\rho^{\alpha-2}\right)V^{\tau} ~dv_{g}
=\int_{M}2\rho^{\alpha-2}\left(2\phi\Delta^{D}\phi+|\nabla^{D}\phi|^{2}V^{\lambda-\mu}\right)V^{\tau} ~dv_{g}.
\endaligned\end{equation}
Taking \eqref{03Sec3thm7-1} into \eqref{02Sec3thm7-1}, we obtain
\begin{equation}\label{04Sec3thm7-1}\aligned
-2\int_{M}\rho^{\alpha-2}\left(\phi\Delta^{D}\phi\right)V^{\tau} ~dv_{g}
\geq&2\int_{M}\rho^{\alpha-2}|\nabla^{D}\phi|^{2}V^{\tau+\lambda-\mu} ~dv_{g}\\
&-(\alpha-2)(\alpha-3+C)\int_{M}\rho^{\alpha-4}\phi^{2}V^{\tau+\lambda-\mu} ~dv_{g}.
\endaligned\end{equation}

On the other hand, using Young's inequality, we have
\begin{equation}\label{05Sec3thm7-1}\aligned
-2\int_{M}\rho^{\alpha-2}\left(\phi\Delta^{D}\phi\right)V^{\tau} ~dv_{g}
\leq&(\alpha-2)(3-\alpha-C)\int_{M}\rho^{\alpha-4}\phi^{2}V^{\tau+\lambda-\mu} ~dv_{g}\\
&+\frac{1}{(\alpha-2)(3-\alpha-C)}\int_{M}\rho^{\alpha}\left(\Delta^{D}\phi\right)^{2}V^{\tau+\mu-\lambda} ~dv_{g}.
\endaligned\end{equation}
Then, combining \eqref{04Sec3thm7-1} and \eqref{05Sec3thm7-1}, we know that \eqref{01Sec3thm7-1} is true.

$\mathrm{(ii)}$\quad From similar calculations in Theorem \ref{thm6}, we can get the following result:
\begin{equation}\label{04Sec3thm7-2}\aligned
-\int_{M}\rho^{\alpha-2}\left(\phi\Delta^{D}\phi\right)V^{\tau} ~dv_{g}
\geq&\int_{M}\rho^{\alpha-2}|\nabla^{D}\phi|^{2}V^{\tau+\lambda-\mu} ~dv_{g}\\
&-\frac{(\alpha-2)(\alpha-3+C)}{2}\int_{M}\rho^{\alpha-4}\phi^{2}V^{\tau+\lambda-\mu} ~dv_{g},
\endaligned\end{equation}
where $\frac{7-C}{3}<\alpha<2$. Using Young's inequality in \eqref{04Sec3thm7-2}, we have
\begin{equation}\label{05Sec3thm7-2}\aligned
-\int_{M}\rho^{\alpha-2}\left(\phi\Delta^{D}\phi\right)V^{\tau} ~dv_{g}
\leq&\varepsilon\int_{M}\rho^{\alpha-4}\phi^{2}V^{\tau+\lambda-\mu} ~dv_{g}\\
&+\frac{1}{4\varepsilon}\int_{M}\rho^{\alpha}\left(\Delta^{D}\phi\right)^{2}V^{\tau+\mu-\lambda} ~dv_{g},
\endaligned\end{equation}
where $\varepsilon$ is a constant to be determined and $\varepsilon>0$.
Substituting \eqref{05Sec3thm7-2} into \eqref{04Sec3thm7-2}, we get
\begin{equation}\label{06Sec3thm7-2}\aligned
\int_{M}\rho^{\alpha-2}|\nabla^{D}\phi|^{2}V^{\tau+\lambda-\mu} ~dv_{g}
\leq& B\int_{M}\rho^{\alpha-4}\phi^{2}V^{\tau+\lambda-\mu} ~dv_{g}\\
&+\frac{1}{4\varepsilon}\int_{M}\rho^{\alpha}\left(\Delta^{D}\phi\right)^{2}V^{\tau+\mu-\lambda} ~dv_{g},
\endaligned\end{equation}
where $B=\varepsilon+\frac{(\alpha-2)(\alpha-3+C)}{2}$.

According to the sign of $B$, we need to consider the following three cases:\\
{\bf Case 1:}

We choose $B<0$ and $\varepsilon=\frac{(C-\alpha+1)^{2}}{16}$, then \eqref{06Sec3thm7-2} becomes
$$\aligned
\int_{M}\rho^{\alpha-2}|\nabla^{D}\phi|^{2}V^{\tau+\lambda-\mu} ~dv_{g}
\leq&\frac{1}{4\varepsilon}\int_{M}\rho^{\alpha}\left(\Delta^{D}\phi\right)^{2}V^{\tau+\mu-\lambda} ~dv_{g},
\endaligned$$
which is equivalent to
$$\aligned
\int_{M}\rho^{\alpha}\left(\Delta^{D}\phi\right)^{2}V^{\tau+\mu-\lambda} ~dv_{g}
\geq&4\varepsilon\int_{M}\rho^{\alpha-2}|\nabla^{D}\phi|^{2}V^{\tau+\lambda-\mu} ~dv_{g}\\
=&\frac{(C-\alpha+1)^{2}}{4}
\int_{M}\rho^{\alpha-2}|\nabla^{D}\phi|^{2}V^{\tau+\lambda-\mu} ~dv_{g}.
\endaligned$$
{\bf Case 2:}

We choose $B=0$ and $\varepsilon=-\frac{(C+\alpha-3)(\alpha-2)}{2}$, then \eqref{06Sec3thm7-2} becomes
\begin{equation}\label{01ad06Sec3thm7-2}\aligned
\int_{M}\rho^{\alpha-2}|\nabla^{D}\phi|^{2}V^{\tau+\lambda-\mu} ~dv_{g}
\leq&\frac{1}{4\varepsilon}\int_{M}\rho^{\alpha}\left(\Delta^{D}\phi\right)^{2}V^{\tau+\mu-\lambda} ~dv_{g}\\
=&\frac{1}{2(2-\alpha)(C+\alpha-3)}\int_{M}\rho^{\alpha}\left(\Delta^{D}\phi\right)^{2}V^{\tau+\mu-\lambda} ~dv_{g}.
\endaligned\end{equation}
In \eqref{01ad06Sec3thm7-2}, since $C>1$ and $\frac{7-C}{3}<\alpha<2$, we can choose $C$ close enough to $1$ and $\alpha$ close enough to $2$ such that
$$\aligned
\int_{M}\rho^{\alpha}\left(\Delta^{D}\phi\right)^{2}V^{\tau+\mu-\lambda} ~dv_{g}
\geq&2(2-\alpha)(C+\alpha-3)\int_{M}\rho^{\alpha-2}|\nabla^{D}\phi|^{2}V^{\tau+\lambda-\mu} ~dv_{g}\\
\geq&\frac{(C-\alpha+1)^{2}}{4}
\int_{M}\rho^{\alpha-2}|\nabla^{D}\phi|^{2}V^{\tau+\lambda-\mu} ~dv_{g}.
\endaligned$$
{\bf Case 3:}

We choose $B>0$, using the weighted Rellich inequality \eqref{01Sec3thm6}, then \eqref{06Sec3thm7-2} becomes
$$\aligned
&\int_{M}\rho^{\alpha-2}|\nabla^{D}\phi|^{2}V^{\tau+\lambda-\mu} ~dv_{g}\\
\leq&\left(\varepsilon+\frac{(C+\alpha-3)(\alpha-2)}{2}\right)\left(\frac{16}{(C+\alpha-3)^{2}(C-\alpha+1)^{2}}\right)
\int_{M}\rho^{\alpha}\left(\Delta^{D}\phi\right)^{2}V^{\tau+\mu-\lambda} ~dv_{g}\\
&+\frac{1}{4\varepsilon}\int_{M}\rho^{\alpha}\left(\Delta^{D}\phi\right)^{2}V^{\tau+\mu-\lambda} ~dv_{g}\\
=&P_{C,\alpha}(\varepsilon)
\int_{M}\rho^{\alpha}\left(\Delta^{D}\phi\right)^{2}V^{\tau+\mu-\lambda} ~dv_{g},
\endaligned$$
where $P_{C,\alpha}(\varepsilon)=\frac{16\varepsilon}{(C+\alpha-3)^{2}(C-\alpha+1)^{2}}+\frac{8(\alpha-2)}{(C+\alpha-3)(C-\alpha+1)^{2}}+\frac{1}{4\varepsilon}$.
Note that the function $P_{C,\alpha}(\varepsilon)$ attains the minimum for $\epsilon=\frac{(C+\alpha-3)(C-\alpha+1)}{8}$, and this minimum is equal to $\frac{4}{(C-\alpha+1)^{2}}$. Therefore we have the following inequality:
$$\aligned
\int_M\rho^\alpha\left(\Delta^{D}\phi\right)^{2}V^{\tau+\mu-\lambda} ~dv_{g}\geq\frac{(C-\alpha+1)^2}{4}\int_M\rho^{\alpha-2}|\nabla^{D}\phi|^2V^{\tau+\lambda-\mu} ~dv_{g}.
\endaligned$$
This completes the proof of Theorem \ref{thm7}.
\end{proof}

\section{Several inequalities of other types}

In the last section, by applying some results we have derived in the above two sections, we can prove
some inequalities of other types with respect to $D^{\lambda,\mu}$ on $M$. First, we can give the following Hardy-Poincar\'{e} type inequalities with respect to $D^{\lambda,\mu}$.

\begin{theorem}\label{thm9}
Let $(M^{n}, g,V=e^{u})$ be an n-dimensional Riemannian triple and
$\lambda,\mu\in \mathbb{R}$. Let $D=D^{\lambda,\mu}$ be the affine connection defined as in \eqref{01adsec02} and $\tau=(n+1)\lambda+\mu$.
Let $\rho$ be a nonnegative function on $M$ such that $|\nabla^{D}\rho|=1$ in the sense of distributions.
Then for any $\phi\in C^{\infty}_{0} (M\backslash\rho^{-1}\{0\})$, we have

$\mathrm{(i)}$\quad When $\Delta^{D}\rho\leq \frac{C}{\rho}V^{\lambda-\mu}$ in the sense of distributions, where $C>0$ is a constant.
Let $1<p<+\infty$ and $C+\alpha+1<0$, then the following inequality
\begin{equation}\label{01Sec2thm9-1}\aligned
\int_{M}\rho^{\alpha+p}|\langle\nabla^{D}\rho, \nabla^{D}\phi\rangle|^{p}V^{\tau+(1-2p)(\mu-\lambda)} ~dv_{g}\geq \left(\frac{|C+\alpha+1|}{p}\right)^{p}\int_{M}\rho^{\alpha}|\phi|^{p}V^{\tau+\mu-\lambda} ~dv_{g}
\endaligned\end{equation}
holds.

$\mathrm{(ii)}$\quad When $\Delta^{D}\rho\geq \frac{C}{\rho}V^{\lambda-\mu}$ in the sense of distributions, where $C>0$ is a constant.
Let $1<p<+\infty$ and $C+\alpha+1>0$, then the following inequality
\begin{equation}\label{01Sec2thm9-2}\aligned
\int_{M}\rho^{\alpha+p}|\langle\nabla^{D}\rho, \nabla^{D}\phi\rangle|^{p}V^{\tau+(1-2p)(\mu-\lambda)} ~dv_{g}\geq \left(\frac{C+\alpha+1}{p}\right)^{p}\int_{M}\rho^{\alpha}|\phi|^{p}V^{\tau+\mu-\lambda} ~dv_{g}
\endaligned\end{equation}
holds.
\end{theorem}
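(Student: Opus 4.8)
The plan is to reuse, with the exponent choice $q=-\alpha$, the divergence--vector--field device already used in the proof of Theorem~\ref{thm2}, the only change being that here we retain the directional derivative $\langle\nabla^{D}\rho,\nabla^{D}\phi\rangle$ rather than bounding it by $|\nabla^{D}\phi|$. Fix $\phi\in C^{\infty}_{0}(M\setminus\rho^{-1}\{0\})$ and consider the vector field $W^{i}:=|\phi|^{p}\rho^{\alpha+1}\nabla^{D}_{i}\rho$. By Lemma~\ref{lem2-2} one has $V^{\tau}D_{i}W^{i}=\nabla_{i}\big(V^{\tau+\mu-\lambda}|\phi|^{p}\rho^{\alpha+1}\nabla_{i}\rho\big)$, an honest divergence, so its integral over $M$ vanishes (here $p>1$ guarantees $|\phi|^{p}\in C^{1}$). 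Expanding this ordinary divergence by the Leibniz rule and rewriting $\Delta\rho$, $|\nabla\rho|^{2}$ and $\langle\nabla\phi,\nabla\rho\rangle$ in affine terms via \eqref{ad01Sec1}--\eqref{ad02Sec1}, exactly as in \eqref{ad01Sec2thm2-1}, yields an identity of the form
$$V^{\tau}D_{i}W^{i}=\rho^{\alpha+1}|\phi|^{p}\,V^{\tau}\Delta^{D}\rho+p\,\rho^{\alpha+1}|\phi|^{p-2}\phi\,\langle\nabla^{D}\phi,\nabla^{D}\rho\rangle\,V^{\tau+\lambda-\mu}+(\alpha+1)\,\rho^{\alpha}|\phi|^{p}\,|\nabla^{D}\rho|^{2}\,V^{\tau+\lambda-\mu}.$$
The only non-mechanical point is that the terms carrying $\langle\nabla u,\nabla\rho\rangle$ cancel, which is forced precisely by the normalisation $\tau=(n+1)\lambda+\mu$ --- the same cancellation used throughout Section~3.

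Next I would insert the hypotheses. With $|\nabla^{D}\rho|=1$ and, in case~(ii), the distributional inequality $\Delta^{D}\rho\ge\frac{C}{\rho}V^{\lambda-\mu}$ (admissible since $\rho^{\alpha+1}|\phi|^{p}V^{\tau}\ge 0$ is a valid test density, the support of $\phi$ avoiding $\rho^{-1}\{0\}$), the first term on the right is at least $C\rho^{\alpha}|\phi|^{p}V^{\tau+\lambda-\mu}$. Integrating the resulting pointwise inequality, discarding the vanishing left-hand side, transposing the gradient term and taking absolute values gives
$$(C+\alpha+1)\int_{M}\rho^{\alpha}|\phi|^{p}V^{\tau+\lambda-\mu}\,dv_{g}\ \le\ p\int_{M}\rho^{\alpha+1}|\phi|^{p-1}\,\big|\langle\nabla^{D}\rho,\nabla^{D}\phi\rangle\big|\,V^{\tau+\lambda-\mu}\,dv_{g}.$$
In case~(i) the inequality on $\Delta^{D}\rho$ is reversed, and the same manipulation produces this estimate with $C+\alpha+1$ replaced by $-(C+\alpha+1)=|C+\alpha+1|$, which is positive by the standing assumption $C+\alpha+1<0$.

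The final step is Hölder's inequality with conjugate exponents $\tfrac{p}{p-1}$ and $p$: writing $\rho^{\alpha+1}=\rho^{\alpha(p-1)/p}\cdot\rho^{(\alpha+p)/p}$ and splitting $V^{\tau+\lambda-\mu}$ and $|\phi|^{p-1}$ accordingly, the right-hand side above is at most
$$p\,\Big(\int_{M}\rho^{\alpha}|\phi|^{p}V^{\tau+\lambda-\mu}\,dv_{g}\Big)^{\tfrac{p-1}{p}}\Big(\int_{M}\rho^{\alpha+p}\,\big|\langle\nabla^{D}\rho,\nabla^{D}\phi\rangle\big|^{p}\,V^{\tau+\lambda-\mu}\,dv_{g}\Big)^{\tfrac1p}.$$
If $\int_{M}\rho^{\alpha}|\phi|^{p}V^{\tau+\lambda-\mu}\,dv_{g}=0$ the claimed inequality is trivial; otherwise this integral is finite and positive, so I would cancel the factor $\big(\int_{M}\rho^{\alpha}|\phi|^{p}V^{\tau+\lambda-\mu}dv_{g}\big)^{(p-1)/p}$, raise to the $p$-th power and rearrange to obtain \eqref{01Sec2thm9-2}; case~(i) gives \eqref{01Sec2thm9-1} identically. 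I expect the main obstacle to be organisational rather than conceptual: aligning the signs so that cases~(i) and~(ii) collapse to the same estimate, and choosing the Hölder split so that exactly $\rho^{\alpha+p}$ accompanies $|\langle\nabla^{D}\rho,\nabla^{D}\phi\rangle|^{p}$ with the correct power of $V$; the divergence identity and the remaining $V$-bookkeeping are routine given the definitions of Section~2.
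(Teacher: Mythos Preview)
Your proof is correct and follows essentially the same route as the paper's: both compute the affine divergence of $|\phi|^{p}\rho^{\alpha+1}\nabla^{D}\rho$ (the paper does this in two stages, first handling $\rho\nabla^{D}\rho$ and then multiplying by $\rho^{\alpha}|\phi|^{p}$), integrate, and apply H\"older to the cross-term $\rho^{\alpha+1}|\phi|^{p-1}|\langle\nabla^{D}\rho,\nabla^{D}\phi\rangle|$. The only difference is that after H\"older the paper inserts Young's inequality with a free parameter $\varepsilon$ and then optimizes over $\varepsilon$ to extract the constant $(|C+\alpha+1|/p)^{p}$, whereas you cancel the shared factor $\big(\int_{M}\rho^{\alpha}|\phi|^{p}V^{\tau+\lambda-\mu}\,dv_{g}\big)^{(p-1)/p}$ directly; the two manoeuvres yield the identical bound, so yours is a harmless streamlining.
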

\begin{proof}

$\mathrm{(i)}$\quad  It follows from $|\nabla^{D}\rho|=1$ and $\Delta^{D}\rho\leq \frac{C}{\rho}V^{\lambda-\mu}$ that
\begin{equation}\label{02Sec2thm9-1}\aligned
&V^{\tau}\rm{div}^{D}\left(\rho\nabla^{D}\rho\right)\\
=&\rm{div}(V^{\tau}\rho\nabla^{D}\rho)\\
=&\rm{div}(V^{\tau+\mu-\lambda}\rho\nabla\rho)\\
=&V^{\tau+\mu-\lambda}|\nabla\rho|^{2}+\rho(\tau+\mu-\lambda)V^{\tau+\mu-\lambda}\langle\nabla u,\nabla\rho\rangle+\rho V^{\tau+\mu-\lambda}\Delta\rho\\
=&V^{\tau+\lambda-\mu}|\nabla^{D}\rho|^{2}+\rho V^{\tau}\Delta^{D}\rho\\
\leq&(1+C)V^{\tau+\lambda-\mu}.
\endaligned\end{equation}
Multiplying \eqref{02Sec2thm9-1} by $\rho^{\alpha}|\phi|^{p}$ and integrating over $M$ yields
\begin{equation}\label{03Sec2thm9-1}\aligned
&(1+C)\int_{M}\rho^{\alpha}|\phi|^{p}V^{\tau+\lambda-\mu} ~dv_{g}\\
\geq& \int_{M}V^{\tau}\rm{div}^{D}(\rho\nabla^{D}\rho)\rho^{\alpha}|\phi|^{p} ~dv_{g} \\
=&\int_{M}\rm{div}(\rho V^{\tau+\mu-\lambda}\nabla\rho)\rho^{\alpha}|\phi|^{p} ~dv_{g} \\
=&-\int_{M}V^{\tau+\mu-\lambda}\langle\rho\nabla\rho,\nabla(\rho^{\alpha}|\phi|^{p})\rangle ~dv_{g}\\
=&-\int_{M}V^{\tau+\mu-\lambda}\langle\rho\nabla\rho,\alpha\rho^{\alpha-1}\nabla\rho|\phi|^{p}+p|\phi|^{p-2}\phi\rho^{\alpha}\nabla\phi
\rangle ~dv_{g}\\
=&-\alpha\int_{M}V^{\tau+\mu-\lambda}\rho^{\alpha}|\nabla\rho|^{2}|\phi|^{p} ~dv_{g}-p\int_{M}\rho^{\alpha+1}|\phi|^{p-2}\phi\langle\nabla\rho,\nabla\phi\rangle V^{\tau+\mu-\lambda} ~dv_{g}\\
=&-\alpha\int_{M}V^{\tau+\lambda-\mu}\rho^{\alpha}|\phi|^{p} ~dv_{g}-p\int_{M}\rho^{\alpha+1}|\phi|^{p-2}\phi\langle\nabla\rho,\nabla\phi\rangle V^{\tau+\mu-\lambda} ~dv_{g}.
\endaligned\end{equation}
Since $C+\alpha+1<0$, we can infer from \eqref{03Sec2thm9-1} that
$$\aligned
&(C+\alpha+1)\int_{M}\rho^{\alpha}|\phi|^{p}V^{\tau+\lambda-\mu} ~dv_{g}\geq
-p\int_{M}\rho^{\alpha+1}|\phi|^{p-2}\phi\langle\nabla\rho,\nabla\phi\rangle V^{\tau+\mu-\lambda} ~dv_{g},
\endaligned$$
which is equivalent to
\begin{equation}\label{04Sec2thm9-1}\aligned
-(C+\alpha+1)\int_{M}\rho^{\alpha}|\phi|^{p}V^{\tau+\lambda-\mu} ~dv_{g}\leq
p\int_{M}V^{\tau+\mu-\lambda}\rho^{\alpha+1}|\phi|^{p-2}\phi\langle\nabla\rho,\nabla\phi\rangle ~dv_{g}.
\endaligned\end{equation}
By \eqref{04Sec2thm9-1}, we can get
\begin{equation}\label{0qad04Sec2thm9-1}\aligned
&|C+\alpha+1|\int_{M}\rho^{\alpha}|\phi|^{p}V^{\tau+\lambda-\mu} ~dv_{g}\\
\leq&
p\left|\int_{M}\rho^{\alpha+1}|\phi|^{p-2}\phi\langle\nabla\rho,\nabla\phi\rangle V^{\tau+\mu-\lambda} ~dv_{g}\right|\\
\leq&p\int_{M}\rho^{\alpha+1}|\phi|^{p-2}|\phi|\left|\langle\nabla\rho,\nabla\phi\rangle\right| V^{\tau+\mu-\lambda} ~dv_{g}\\
=&p\int_{M}\rho^{\alpha+1}|\phi|^{p-1}\left|\langle\nabla\rho,\nabla\phi\rangle\right| V^{\tau+\mu-\lambda} ~dv_{g}.
\endaligned\end{equation}
Applying H\"{o}lder's inequality shows that
\begin{equation}\label{005Sec2thm9-1}\aligned
&p\int_{M}\rho^{\alpha+1}|\phi|^{p-1}\left|\langle\nabla\rho,\nabla\phi\rangle\right| V^{\tau+\mu-\lambda} ~dv_{g}\\
\leq&p\left[\int_{M}\left(\rho^{\frac{\alpha(p-1)}{p}}|\phi|^{p-1}V^{\frac{p-1}{p}(\tau+\lambda-\mu)}\right)^{\frac{p}{p-1}} ~dv_{g}\right]^{\frac{p-1}{p}}
\left[\int_{M}\left(\rho^{\frac{\alpha+p}{p}}\left|\langle\nabla\rho,\nabla\phi\rangle\right|V^{\frac{1}{p}\tau+\left(\frac{1}{p}-2\right)(\lambda-\mu)}\right)^{p} ~dv_{g}\right]^{\frac{1}{p}}\\
=&p\left(\int_{M}\rho^{\alpha}|\phi|^{p}V^{\tau+\lambda-\mu} ~dv_{g}\right)^{\frac{p-1}{p}}\left(\int_{M}\rho^{\alpha+p}|\langle\nabla\rho,\nabla\phi\rangle|^{p}V^{\tau+\left(1-2p\right)(\lambda-\mu)} ~dv_{g}\right)^{\frac{1}{p}}.
\endaligned\end{equation}
By Young's inequality, we have
\begin{equation}\label{006Sec2thm9-1}\aligned
&\left(\int_{M}\rho^{\alpha}|\phi|^{p}V^{\tau+\lambda-\mu} ~dv_{g}\right)^{\frac{p-1}{p}}\left(\int_{M}\rho^{\alpha+p}|\langle\nabla\rho,\nabla\phi\rangle|^{p}V^{\tau+\left(1-2p\right)(\lambda-\mu)} ~dv_{g}\right)^{\frac{1}{p}} \\
\leq&\frac{p-1}{p}\varepsilon^{-\frac{p}{p-1}}\int_{M}\rho^{\alpha}|\phi|^{p}V^{\tau+\lambda-\mu} ~dv_{g}
+\frac{\varepsilon^{p}}{p}\int_{M}\rho^{\alpha+p}|\langle\nabla\rho,\nabla\phi\rangle|^{p}V^{\tau+\left(1-2p\right)(\lambda-\mu)} ~dv_{g} \endaligned\end{equation}
for any $\varepsilon>0$. Inserting \eqref{005Sec2thm9-1} and \eqref{006Sec2thm9-1} into \eqref{0qad04Sec2thm9-1}, we obtain
\begin{equation}\label{05Sec2thm9-1}\aligned
|C+\alpha+1|\int_{M}\rho^{\alpha}|\phi|^{p}V^{\tau+\lambda-\mu} ~dv_{g}
\leq&(p-1)\varepsilon^{-\frac{p}{p-1}}\int_{M}\rho^{\alpha}|\phi|^{p}V^{\tau+\lambda-\mu} ~dv_{g}\\
&+\varepsilon^{p}\int_{M}\rho^{\alpha+p}|\langle\nabla\rho,\nabla\phi\rangle|^{p}V^{\tau+\left(1-2p\right)(\lambda-\mu)} ~dv_{g}. \endaligned\end{equation}
Thus, for any $\varepsilon>0$, from \eqref{05Sec2thm9-1}, we have
$$\aligned
&\int_{M}\rho^{\alpha+p}\left|\langle\nabla^{D}\rho,\nabla^{D}\phi\rangle\right|^{p}V^{\tau+(1-2p)(\mu-\lambda)} ~dv_{g}\\
\geq&\varepsilon^{-p}\left(|C+\alpha+1|-(p-1)\varepsilon^{-\frac{p}{p-1}}\right)\int_{M}\rho^{\alpha}|\phi|^{p}V^{\tau+\lambda-\mu} ~dv_{g}.
\endaligned$$
Taking
$$\aligned
\varepsilon=\left(\frac{p}{|C+\alpha+1|}\right)^{\frac{p-1}{p}}
\endaligned$$
in the above inequality, we can get \eqref{01Sec2thm9-1} directly.

$\mathrm{(ii)}$\quad  It follows from $|\nabla^{D}\rho|=1$ and $\Delta^{D}\rho\geq \frac{C}{\rho}V^{\lambda-\mu}$ that
\begin{equation}\label{02Sec2thm9-2}\aligned
&V^{\tau}\rm{div}^{D}\left(\rho\nabla^{D}\rho\right)\\
=&\rm{div}(V^{\tau}\rho\nabla^{D}\rho)\\
=&\rm{div}(V^{\tau+\mu-\lambda}\rho\nabla\rho)\\
=&V^{\tau+\mu-\lambda}|\nabla\rho|^{2}+\rho(\tau+\mu-\lambda)V^{\tau+\mu-\lambda}\langle\nabla u,\nabla\rho\rangle+\rho V^{\tau+\mu-\lambda}\Delta\rho\\
=&V^{\tau+\lambda-\mu}|\nabla^{D}\rho|^{2}+\rho V^{\tau}\Delta^{D}\rho\\
\geq&(1+C)V^{\tau+\lambda-\mu}.
\endaligned\end{equation}
Multiplying \eqref{02Sec2thm9-2} by $\rho^{\alpha}|\phi|^{p}$ and integrating over $M$ yields
\begin{equation}\label{03Sec2thm9-2}\aligned
&(1+C)\int_{M}\rho^{\alpha}|\phi|^{p}V^{\tau+\lambda-\mu} ~dv_{g}\\
\leq&\int_{M}V^{\tau}\rm{div}^{D}(\rho\nabla^{D}\rho)\rho^{\alpha}|\phi|^{p} ~dv_{g} \\
=&\int_{M}\nabla(\rho V^{\tau+\mu-\lambda}\nabla\rho)\rho^{\alpha}|\phi|^{p} ~dv_{g} \\
=&-\int_{M}V^{\tau+\mu-\lambda}\langle\rho\nabla\rho,\nabla(\rho^{\alpha}|\phi|^{p})\rangle ~dv_{g}\\
=&-\int_{M}V^{\tau+\mu-\lambda}\langle\rho\nabla\rho,\alpha\rho^{\alpha-1}\nabla\rho|\phi|^{p}+p|\phi|^{p-2}\phi\rho^{\alpha}\nabla\phi
\rangle ~dv_{g}\\
=&-\alpha\int_{M}V^{\tau+\mu-\lambda}\rho^{\alpha}|\nabla\rho|^{2}|\phi|^{p} ~dv_{g}-p\int_{M}\rho^{\alpha+1}|\phi|^{p-2}\phi\langle\nabla\rho,\nabla\phi\rangle V^{\tau+\mu-\lambda} ~dv_{g}\\
=&-\alpha\int_{M}V^{\tau+\lambda-\mu}\rho^{\alpha}|\phi|^{p} ~dv_{g}-p\int_{M}\rho^{\alpha+1}|\phi|^{p-2}\phi\langle\nabla\rho,\nabla\phi\rangle V^{\tau+\mu-\lambda} ~dv_{g}.
\endaligned\end{equation}
Since $C+\alpha+1>0$, we can infer from \eqref{03Sec2thm9-2} that
$$\aligned
&(C+\alpha+1)\int_{M}\rho^{\alpha}|\phi|^{p}V^{\tau+\lambda-\mu} ~dv_{g}\leq
-p\int_{M}\rho^{\alpha+1}|\phi|^{p-2}\phi\langle\nabla\rho,\nabla\phi\rangle V^{\tau+\mu-\lambda} ~dv_{g},
\endaligned$$
which is equivalent to
\begin{equation}\label{04Sec2thm9-2}\aligned
(C+\alpha+1)\int_{M}\rho^{\alpha}|\phi|^{p}V^{\tau+\lambda-\mu} ~dv_{g}\leq
p\int_{M}V^{\tau+\mu-\lambda}\rho^{\alpha+1}|\phi|^{p-2}\phi\langle\nabla\rho,\nabla\phi\rangle ~dv_{g}.
\endaligned\end{equation}
By \eqref{04Sec2thm9-2}, we can get
\begin{equation}\label{01ad04Sec2thm9-2}\aligned
(C+\alpha+1)\int_{M}\rho^{\alpha}|\phi|^{p}V^{\tau+\lambda-\mu} ~dv_{g}
\leq p\int_{M}\rho^{\alpha+1}|\phi|^{p-1}\left|\langle\nabla\rho,\nabla\phi\rangle\right| V^{\tau+\mu-\lambda} ~dv_{g}.
\endaligned\end{equation}
By H\"{o}lder's and Young's inequalities, we have
\begin{equation}\label{05Sec2thm9-2}\aligned
&p\int_{M}\rho^{\alpha+1}|\phi|^{p-1}\left|\langle\nabla\rho,\nabla\phi\rangle\right| V^{\tau+\mu-\lambda} ~dv_{g}\\
\leq&p\left[\int_{M}\left(\rho^{\frac{\alpha(p-1)}{p}}|\phi|^{p-1}V^{\frac{p-1}{p}(\tau+\lambda-\mu)}\right)^{\frac{p}{p-1}} ~dv_{g}\right]^{\frac{p-1}{p}}
\left[\int_{M}\left(\rho^{\frac{\alpha+p}{p}}\left|\langle\nabla\rho,\nabla\phi\rangle\right|V^{\frac{1}{p}\tau+\left(\frac{1}{p}-2\right)(\lambda-\mu)}\right)^{p} ~dv_{g}\right]^{\frac{1}{p}}\\
=&p\left(\int_{M}\rho^{\alpha}|\phi|^{p}V^{\tau+\lambda-\mu} ~dv_{g}\right)^{\frac{p-1}{p}}\left(\int_{M}\rho^{\alpha+p}|\langle\nabla\rho,\nabla\phi\rangle|^{p}V^{\tau+\left(1-2p\right)(\lambda-\mu)} ~dv_{g}\right)^{\frac{1}{p}} \\
\leq&(p-1)\varepsilon^{-\frac{p}{p-1}}\int_{M}\rho^{\alpha}|\phi|^{p}V^{\tau+\lambda-\mu} ~dv_{g}
+\varepsilon^{p}\int_{M}\rho^{\alpha+p}|\langle\nabla\rho,\nabla\phi\rangle|^{p}V^{\tau+\left(1-2p\right)(\lambda-\mu)} ~dv_{g}. \endaligned\end{equation}
Thus, for any $\varepsilon>0$, from \eqref{01ad04Sec2thm9-2} and \eqref{05Sec2thm9-2}, we have
$$\aligned
&\int_{M}\rho^{\alpha+p}\left|\langle\nabla^{D}\rho,\nabla^{D}\phi\rangle\right|^{p}V^{\tau+(1-2p)(\lambda-\mu)} ~dv_{g}\\
\geq&\varepsilon^{-p}\left((C+\alpha+1)-(p-1)\varepsilon^{-\frac{p}{p-1}}\right)\int_{M}\rho^{\alpha}|\phi|^{p}V^{\tau+\lambda-\mu} ~dv_{g}.
\endaligned$$
Taking
$$\aligned
\varepsilon=\left(\frac{p}{C+\alpha+1}\right)^{\frac{p-1}{p}}
\endaligned$$
in the above inequality, we can get \eqref{01Sec2thm9-2} directly. This completes the proof of Theorem \ref{thm9}.
\end{proof}

We now prove, under an additional assumption on the weight functions $\rho(x)$ and $a(x)$, the following two-weight
Hardy-Poincar\'{e} inequality with a nonnegative remainder term. Our result recovers and improves the inequality
\eqref{1adaInt2}.

\begin{theorem}\label{thm5}
Let $(M^{n}, g,V=e^{u})$ be an n-dimensional Riemannian triple and
$\lambda,\mu\in \mathbb{R}$. Let $D=D^{\lambda,\mu}$ be the affine connection defined as in \eqref{01adsec02} and $\tau=(n+1)\lambda+\mu$. Let $\rho$ be a nonnegative functions on $M$ such that $|\nabla^{D}\rho|=1,~\Delta^{D}\rho\geq\frac{C}{\rho}V^{\lambda-\mu}$ and $\langle\nabla^{D}\rho,\nabla^{D} a\rangle\geq0$ in the sense of distribution, where $C>1$, $C+\alpha+1>0$, $\alpha\in\mathbb{R}$ and $a$ be a nonnegative functions on $M$. Then the following inequality
\begin{equation}\label{01Sec2thm05}\aligned
\int_{M}a\rho^{\alpha+p}|\langle\nabla^{D}\rho,\nabla^{D}\phi\rangle|^{p}V^{\tau+\lambda-\mu}~dv_{g}\geq & \left(\frac{C+\alpha+1}{p}\right)^{p}\int_{M}a\rho^{\alpha}|\phi|^{p}V^{\tau+\lambda-\mu}~dv_{g} \\
&+\left(\frac{C+\alpha+1}{p}\right)^{p-1}\int_{M}\rho^{\alpha+1}|\phi|^{p}\langle\nabla^{D}\rho,\nabla^{D}a\rangle V^{\tau+\lambda-\mu}~dv_{g}
\endaligned\end{equation}
holds for any $\phi$ in $C_{0}^{\infty}\left(M\setminus\rho^{-1}\{0\}\right)$, $1<p<+\infty.$
\end{theorem}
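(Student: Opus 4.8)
The plan is to run the argument of Theorem \ref{thm9}(ii) with the extra weight $a$ present throughout and, crucially, to retain the term produced by $\langle\nabla^{D}\rho,\nabla^{D}a\rangle$ as a nonnegative remainder instead of discarding it. First I would compute the affine divergence of the vector field $a\rho\,\nabla^{D}\rho$. Using Lemma \ref{lem2-2}, \eqref{ad01Sec1}, \eqref{ad02Sec1} and the identity $\tau+\mu-\lambda=2\mu+n\lambda$, one obtains
\[
V^{\tau}\mathrm{div}^{D}\big(a\rho\,\nabla^{D}\rho\big)=a\rho\,V^{\tau}\Delta^{D}\rho+a\,|\nabla^{D}\rho|^{2}V^{\tau+\lambda-\mu}+\rho\,\langle\nabla^{D}\rho,\nabla^{D}a\rangle\,V^{\tau+\lambda-\mu}.
\]
Since $a\ge 0$, $|\nabla^{D}\rho|=1$ and $\Delta^{D}\rho\ge\frac{C}{\rho}V^{\lambda-\mu}$, this gives the pointwise bound $V^{\tau}\mathrm{div}^{D}(a\rho\nabla^{D}\rho)\ge (C+1)a\,V^{\tau+\lambda-\mu}+\rho\langle\nabla^{D}\rho,\nabla^{D}a\rangle V^{\tau+\lambda-\mu}$.

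Next I would multiply this inequality by $\rho^{\alpha}|\phi|^{p}\ge 0$, integrate over $M$, and move the derivative off $a\rho\nabla^{D}\rho$ by the integration by parts of Lemma \ref{lem2-2}, exactly as in \eqref{03Sec2thm9-2}. Expanding $\nabla(\rho^{\alpha}|\phi|^{p})=\alpha\rho^{\alpha-1}|\phi|^{p}\nabla\rho+p\rho^{\alpha}|\phi|^{p-2}\phi\,\nabla\phi$ and using $|\nabla^{D}\rho|=1$, the left-hand side collapses to $-\alpha\int_{M}a\rho^{\alpha}|\phi|^{p}V^{\tau+\lambda-\mu}dv_{g}-p\int_{M}a\rho^{\alpha+1}|\phi|^{p-2}\phi\langle\nabla\rho,\nabla\phi\rangle V^{\tau+\mu-\lambda}dv_{g}$. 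Combining with the pointwise bound, keeping the $\langle\nabla^{D}\rho,\nabla^{D}a\rangle$ term on the left (it is $\ge 0$ since $\rho>0$ on the support of $\phi$ and $\langle\nabla^{D}\rho,\nabla^{D}a\rangle\ge 0$), estimating $|\phi|^{p-2}\phi\langle\nabla\rho,\nabla\phi\rangle\le|\phi|^{p-1}|\langle\nabla\rho,\nabla\phi\rangle|$, and converting $\langle\nabla\rho,\nabla\phi\rangle V^{\tau+\mu-\lambda}=\langle\nabla^{D}\rho,\nabla^{D}\phi\rangle V^{\tau+\lambda-\mu}$ through $\nabla^{D}=V^{\mu-\lambda}\nabla$, I arrive at
\[
(C+\alpha+1)\,B+R\ \le\ p\int_{M}a\,\rho^{\alpha+1}|\phi|^{p-1}\,|\langle\nabla^{D}\rho,\nabla^{D}\phi\rangle|\,V^{\tau+\lambda-\mu}\,dv_{g},
\]
where $B=\int_{M}a\rho^{\alpha}|\phi|^{p}V^{\tau+\lambda-\mu}dv_{g}$ and $R=\int_{M}\rho^{\alpha+1}|\phi|^{p}\langle\nabla^{D}\rho,\nabla^{D}a\rangle V^{\tau+\lambda-\mu}dv_{g}\ge 0$.

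To finish I would apply Hölder's inequality with exponents $p$ and $p/(p-1)$ to the right-hand side, splitting $a=a^{1/p}a^{(p-1)/p}$ (this is where $a\ge 0$ is used) and $\rho^{\alpha+1}=\rho^{(\alpha+p)/p}\rho^{\alpha(p-1)/p}$, which turns the last inequality into $(C+\alpha+1)B+R\le pA^{1/p}B^{(p-1)/p}$ with $A=\int_{M}a\rho^{\alpha+p}|\langle\nabla^{D}\rho,\nabla^{D}\phi\rangle|^{p}V^{\tau+\lambda-\mu}dv_{g}$. Then Young's inequality $pA^{1/p}B^{(p-1)/p}\le\varepsilon^{p}A+(p-1)\varepsilon^{-p/(p-1)}B$ for $\varepsilon>0$, together with the very choice $\varepsilon=\big(p/(C+\alpha+1)\big)^{(p-1)/p}$ used at the end of Theorem \ref{thm9}, gives $\varepsilon^{p}A\ge\frac{C+\alpha+1}{p}\,B+R$, that is, $A\ge\big(\frac{C+\alpha+1}{p}\big)^{p}B+\big(\frac{C+\alpha+1}{p}\big)^{p-1}R$, which is precisely \eqref{01Sec2thm05}. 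The degenerate case $B=0$ forces $a|\phi|^{p}\equiv 0$ on $\{\rho>0\}$, hence $A=R=0$ and the inequality is trivial.

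The argument is not conceptually hard; the main obstacle is bookkeeping. One must track the powers of $V$ while repeatedly passing between $\nabla,\mathrm{div}$ and $\nabla^{D},\mathrm{div}^{D}$ via $\nabla^{D}=V^{\mu-\lambda}\nabla$ and Lemma \ref{lem2-2}, so that every integrand ends up with the same weight $V^{\tau+\lambda-\mu}$. The genuinely delicate point is that the remainder $R$ must survive the Hölder--Young step with exactly the constant $\big(\frac{C+\alpha+1}{p}\big)^{p-1}$; this works because, once $R$ is kept on the favourable side of $(C+\alpha+1)B+R\le pA^{1/p}B^{(p-1)/p}$, the same $\varepsilon$ that is optimal for the weightless estimate (Theorem \ref{thm9}) automatically multiplies $R$ by $\varepsilon^{-p}=\big(\frac{C+\alpha+1}{p}\big)^{p-1}$, with no further optimization needed.
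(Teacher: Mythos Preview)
Your proposal is correct and follows essentially the same route as the paper's proof: compute the affine divergence, multiply by the weight $\rho^{\alpha}|\phi|^{p}$, integrate by parts, then apply H\"older and Young with the optimal $\varepsilon=\big(p/(C+\alpha+1)\big)^{(p-1)/p}$. The only cosmetic difference is that you place $a$ inside the vector field and compute $\mathrm{div}^{D}(a\rho\nabla^{D}\rho)$, whereas the paper computes $\mathrm{div}^{D}(\rho\nabla^{D}\rho)$ and multiplies by $a\rho^{\alpha}|\phi|^{p}$; by the product rule these yield the identical intermediate inequality $(C+\alpha+1)B+R\le pA^{1/p}B^{(p-1)/p}$, and the remaining optimization is the same.
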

\begin{proof}
 By direct computation, together with assumptions $|\nabla^{D}\rho|=1$ and $\Delta^{D}\rho\geq\frac{C}{\rho}V^{\lambda-\mu}$, we have
\begin{equation}\label{02Sec2thm05}\aligned
&V^{\tau}D_{i}\left(\rho\nabla^{D}_{i}\rho\right)=\nabla_{i}\left(V^{\tau+\mu-\lambda}\rho\nabla_{i}\rho\right)\\
=&(\tau+\mu-\lambda)V^{\tau+\mu-\lambda}\rho\langle\nabla\rho,\nabla u\rangle+V^{\tau+\mu-\lambda}|\nabla\rho|^{2}+V^{\tau+\mu-\lambda}\rho\Delta\rho\\
=&V^{\tau}\rho\Delta^{D}\rho+V^{\tau+\lambda-\mu}|\nabla^{D}\rho|^{2}\\
\geq&(C+1)V^{\tau+\lambda-\mu}.
\endaligned\end{equation}
Multiplying both sides of \eqref{02Sec2thm05} by $a\rho^\alpha|\phi|^p$ and integrating over $M$ yields
$$\aligned
&(C+1)\int_{M}a\rho^{\alpha}|\phi|^{p}V^{\tau+\lambda-\mu}~dv_{g}\\
\leq&\int_{M}a\rho^{\alpha}|\phi|^{p}V^{\tau}D_{i}(\rho\nabla_{i}^{D}\rho)~dv_{g}\\
=&\int_{M}a\rho^{\alpha}|\phi|^{p}\nabla_{i}(\rho V^{\tau+\mu-\lambda}\nabla_{i}\rho)~dv_{g}\\
=&-\int_{M}V^{\tau+\mu-\lambda}\langle\rho\nabla\rho,\nabla\left(a\rho^{\alpha}|\phi|^{p}\right)\rangle~dv_{g}\\
=&-\int_{M}\rho^{\alpha+1}|\phi|^{p}V^{\tau+\mu-\lambda}\langle\nabla\rho,\nabla a\rangle~dv_{g}-\alpha\int_{M}a\rho^{\alpha}|\nabla\rho|^{2}|\phi|^{p}V^{\tau+\mu-\lambda}~dv_{g}\\
&-p\int_{M}a\rho^{\alpha+1}|\phi|^{p-2}\phi \langle\nabla\rho,\nabla\phi\rangle V^{\tau+\mu-\lambda}~dv_{g}.
\endaligned$$
After rearranging the terms, the above inequality can be written in the following form
\begin{equation}\label{03Sec2thm05}\aligned
A\leq&-p\int_{M}a\rho^{\alpha+1}|\phi|^{p-2}\phi \langle\nabla\rho,\nabla\phi\rangle V^{\tau+\mu-\lambda}~dv_{g}\\
\leq&p\int_{M}a\rho^{\alpha+1}|\phi|^{p-1}|\langle\nabla\rho,\nabla\phi\rangle| V^{\tau+\mu-\lambda}~dv_{g}.
\endaligned\end{equation}
where
$$\aligned
A=(C+1+\alpha)\int_{M}a\rho^{\alpha}|\phi|^{p}V^{\tau+\lambda-\mu}~dv_{g}+\int_{M}\rho^{\alpha+1}|\phi|^{p}\langle\nabla\rho,\nabla a\rangle V^{\tau+\mu-\lambda}~dv_{g}.
\endaligned$$
Applying  H\"{o}lder's inequality on the right hand side of \eqref{03Sec2thm05}, we get
\begin{equation}\label{004Sec2thm05}\aligned
A
\leq&p\int_{M}a^{\frac{p-1}{p}}\rho^{\frac{\alpha(p-1)}{p}}|\phi|^{p-1}a^{\frac{1}{p}}\rho^{\frac{\alpha}{p}+1} |\langle\nabla\rho,\nabla\phi\rangle| V^{\tau+\mu-\lambda}~dv_{g}\\
\leq&p\left[\int_{M}\left(a^{\frac{p-1}{p}}\rho^{\frac{\alpha(p-1)}{p}}|\phi|^{p-1}V^{\frac{(p-1)(\tau+\lambda-\mu)}{p}}\right)^{\frac{p}{p-1}}~dv_{g}\right]^{\frac{p-1}{p}}
\left[\int_{M}\left(a^{\frac{1}{p}}\rho^{\frac{\alpha}{p}+1} |\langle\nabla\rho,\nabla\phi\rangle| V^{\frac{\tau+(2p-1)(\mu-\lambda)}{p}}\right)^{p}~dv_{g}\right]^{\frac{1}{p}}\\
=&
p\left(\int_{M}a\rho^{\alpha}|\phi|^{p}V^{\tau+\lambda-\mu}~dv_{g}\right)^{\frac{p-1}{p}}
\left(\int_{M}a\rho^{\alpha+p}|\langle\nabla\rho,\nabla\phi\rangle|^{p} V^{\tau+(2p-1)(\mu-\lambda)}~dv_{g}\right)^{\frac{1}{p}}.
\endaligned\end{equation}
Using Young's inequality on the right hand side of \eqref{004Sec2thm05} again, we have
\begin{equation}\label{005Sec2thm05}\aligned
&\left(\int_{M}a\rho^{\alpha}|\phi|^{p}V^{\tau+\lambda-\mu}~dv_{g}\right)^{\frac{p-1}{p}}
\left(\int_{M}a\rho^{\alpha+p}|\langle\nabla\rho,\nabla\phi\rangle|^{p} V^{\tau+(2p-1)(\mu-\lambda)}~dv_{g}\right)^{\frac{1}{p}}\\
\leq&\frac{p-1}{p}\left(\frac{1}{\varepsilon^{\frac{p}{p-1}}}\int_{M}a\rho^{\alpha}|\phi|^{p}V^{\tau+\lambda-\mu}~dv_{g}
\right)+\frac{1}{p}\left(\varepsilon^{p}\int_{M}a\rho^{\alpha+p}|\langle\nabla\rho,\nabla\phi\rangle|^{p} V^{\tau+(2p-1)(\mu-\lambda)}~dv_{g}\right)
\endaligned\end{equation}
for any $\varepsilon>0$.
Substituting \eqref{005Sec2thm05} into \eqref{004Sec2thm05}, we can get
\begin{equation}\label{04Sec2thm05}\aligned
A
\leq(p-1)\varepsilon^{-\frac{p}{p-1}}\int_{M}a\rho^{\alpha}|\phi|^{p}V^{\tau+\lambda-\mu}~dv_{g}+\varepsilon^{p}\int_{M}a\rho^{\alpha+p}|\langle\nabla\rho,\nabla\phi\rangle|^{p} V^{\tau+(2p-1)(\mu-\lambda)}~dv_{g}.
\endaligned\end{equation}
Reorganizing yields
$$\aligned
&\left(C+1+\alpha-(p-1)\varepsilon^{-\frac{p}{p-1}}\right)\int_{M}a\rho^{\alpha}|\phi|^{p}V^{\tau+\lambda-\mu}~dv_{g}+\int_{M}\rho^{\alpha+1}|\phi|^{p}\langle\nabla\rho,\nabla a\rangle V^{\tau+\mu-\lambda}~dv_{g}\\
\leq&\varepsilon^{p}\int_{M}a\rho^{\alpha+p}|\langle\nabla\rho,\nabla\phi\rangle|^{p} V^{\tau+(2p-1)(\mu-\lambda)}~dv_{g}.
\endaligned$$
Let $f(\varepsilon)=\varepsilon^{-p}\left[C+1+\alpha-(p-1)\varepsilon^{-\frac{p}{p-1}}\right]$, we have
$$\aligned
&f(\varepsilon)\int_{M}a\rho^{\alpha}|\phi|^{p}V^{\tau+\lambda-\mu}~dv_{g}+\varepsilon^{-p}\int_{M}\rho^{\alpha+1}|\phi|^{p}\langle\nabla\rho,\nabla a\rangle V^{\tau+\mu-\lambda}~dv_{g}\\
\leq&\int_{M}a\rho^{\alpha+p}|\langle\nabla\rho,\nabla\phi\rangle|^{p} V^{\tau+(2p-1)(\gamma-\alpha)}~dv_{g}\\
=&\int_{M}a\rho^{\alpha+p}|\langle\nabla^{D}\rho,\nabla^{D}\phi\rangle|^{p} V^{\tau+\lambda-\mu}~dv_{g}.
\endaligned$$
Note that the function $f$ attains the maximum for $\varepsilon_0=$ $\left(\frac p{C+\alpha+1}\right)^{\frac{p-1}p}$ and this maximum value is equal to $\left(\frac{C+\alpha+1}{p}\right)^p$. Therefore we obtain the desired inequality
\begin{equation}\label{05Sec2thm05}\aligned
\int_{M}a\rho^{\alpha+p}|\langle\nabla^{D}\rho,\nabla^{D}\phi\rangle|^{p} V^{\tau+\lambda-\mu}~dv_{g} \geq&\left(\frac{C+\alpha+1}{p}\right)^{p}\int_{M}a\rho^{\alpha}|\phi|^{p}V^{\tau+\lambda-\mu}~dv_{g}  \\
&+\left(\frac{C+\alpha+1}{p}\right)^{p-1}\int_{M}\rho^{\alpha+1}|\phi|^{p}\langle\nabla^{D}\rho,\nabla^{D} a \rangle V^{\tau+\lambda-\mu}~dv_{g}.
\endaligned\end{equation}
The proof of Theorem \ref{thm5} is completed.
\end{proof}

The following Heisenberg-Pauli-Weyl type inequalities with respect to $D^{\lambda,\mu}$ has been
proved:

\begin{theorem}\label{thm10}
Let $(M^{n}, g,V=e^{u})$ be an n-dimensional Riemannian triple and
$\lambda,\mu\in \mathbb{R}$. Let $D=D^{\lambda,\mu}$ be the affine connection defined as in \eqref{01adsec02} and $\tau=(n+1)\lambda+\mu$.
Let $\rho$ be a nonnegative function on $M$ such that $|\nabla^{D}\rho|=1$ and $\Delta^{D}\rho\geq \frac{C}{\rho}V^{\lambda-\mu}$ in the sense of distributions, where $C>0$ is a constant.
Then for any compactly supported smooth function $\phi\in C^{\infty}_{0} (M\backslash\rho^{-1}\{0\})$, we have
\begin{equation}\label{01Sec2thm10}\aligned
\left(\int_{M}\rho^{2}\phi^{2}V^{\tau+\lambda-\mu} ~dv_{g}\right)\left(\int_{M}|\nabla^{D}\phi|^{2}V^{\tau+\lambda-\mu} ~dv_{g}\right)
\geq\frac{(C+1)^{2}}{4}\left(\int_{M}\phi^{2}V^{\tau+\lambda-\mu} ~dv_{g}\right)^{2}.
\endaligned\end{equation}
Moreover, for $C>1$, we have
\begin{equation}\label{02Sec2thm10}\aligned
\left(\int_{M}\rho^{4}\phi^{2}V^{\tau+\lambda-\mu} ~dv_{g}\right)\left(\int_{M}(\Delta^{D}\phi)^{2}V^{\tau+\mu-\lambda} ~dv_{g}\right)
\geq\frac{(C+1)^{4}}{16}\left(\int_{M}\phi^{2}V^{\tau+\lambda-\mu} ~dv_{g}\right)^{2}.
\endaligned\end{equation}
\end{theorem}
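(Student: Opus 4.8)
The plan is to obtain \eqref{01Sec2thm10} from the divergence identity for the field $\rho\nabla^{D}\rho$ followed by a single Cauchy--Schwarz inequality, and then to bootstrap \eqref{02Sec2thm10} out of \eqref{01Sec2thm10} by inserting an integration by parts that converts $\int|\nabla^{D}\phi|^{2}$ into $-\int\phi\,\Delta^{D}\phi$. The guiding analogy is the Euclidean one: here $\rho\nabla^{D}\rho$ plays the role of the position field $x$ and the constant $C+1$ plays the role of $n=\mathrm{div}\,x$.

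\emph{First inequality.} As already computed in the proof of Theorem \ref{thm9} (see \eqref{02Sec2thm9-2}), the hypotheses $|\nabla^{D}\rho|=1$ and $\Delta^{D}\rho\geq\frac{C}{\rho}V^{\lambda-\mu}$ give $V^{\tau}\,\mathrm{div}^{D}(\rho\nabla^{D}\rho)\geq(C+1)V^{\tau+\lambda-\mu}$. Multiplying by $\phi^{2}\geq 0$, integrating over $M$, and integrating by parts exactly as in \eqref{03Sec2thm9-2} with $\alpha=0$ and $p=2$, then using $|\langle\nabla\rho,\nabla\phi\rangle|\le|\nabla\rho|\,|\nabla\phi|$ together with $|\nabla\rho|=V^{\lambda-\mu}$ and $|\nabla\phi|=V^{\lambda-\mu}|\nabla^{D}\phi|$, I would reach
\[
(C+1)\int_{M}\phi^{2}V^{\tau+\lambda-\mu}\,dv_{g}\;\le\;2\int_{M}\rho\,|\phi|\,|\nabla^{D}\phi|\,V^{\tau+\lambda-\mu}\,dv_{g}.
\]
Splitting $V^{\tau+\lambda-\mu}=V^{(\tau+\lambda-\mu)/2}\cdot V^{(\tau+\lambda-\mu)/2}$, applying Cauchy--Schwarz to the right-hand side, and squaring then yields \eqref{01Sec2thm10}.

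\emph{Second inequality.} Keeping the estimate above in its un-squared form,
\[
\int_{M}\phi^{2}V^{\tau+\lambda-\mu}\,dv_{g}\;\le\;\frac{2}{C+1}\left(\int_{M}\rho^{2}\phi^{2}V^{\tau+\lambda-\mu}\,dv_{g}\right)^{1/2}\left(\int_{M}|\nabla^{D}\phi|^{2}V^{\tau+\lambda-\mu}\,dv_{g}\right)^{1/2},
\]
I would feed in two extra bounds. First, the integration by parts \eqref{ad03Sec1} with $\omega=\phi$ gives $\int_{M}|\nabla^{D}\phi|^{2}V^{\tau+\lambda-\mu}\,dv_{g}=-\int_{M}\phi\,\Delta^{D}\phi\,V^{\tau}\,dv_{g}$, and splitting $V^{\tau}=V^{(\tau+\lambda-\mu)/2}\cdot V^{(\tau+\mu-\lambda)/2}$ and using Cauchy--Schwarz bounds the right-hand side by $(\int_{M}\phi^{2}V^{\tau+\lambda-\mu}\,dv_{g})^{1/2}(\int_{M}(\Delta^{D}\phi)^{2}V^{\tau+\mu-\lambda}\,dv_{g})^{1/2}$. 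Second, Cauchy--Schwarz again gives $\int_{M}\rho^{2}\phi^{2}V^{\tau+\lambda-\mu}\,dv_{g}\le(\int_{M}\rho^{4}\phi^{2}V^{\tau+\lambda-\mu}\,dv_{g})^{1/2}(\int_{M}\phi^{2}V^{\tau+\lambda-\mu}\,dv_{g})^{1/2}$. Substituting both into the displayed estimate, dividing through by $(\int_{M}\phi^{2}V^{\tau+\lambda-\mu}\,dv_{g})^{1/2}$ (the case $\phi\equiv 0$ being trivial), and raising the result to the fourth power gives \eqref{02Sec2thm10}.

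I do not expect a genuine obstacle: the whole argument is a short chain of Cauchy--Schwarz inequalities hung on the single divergence identity $V^{\tau}\,\mathrm{div}^{D}(\rho\nabla^{D}\rho)\ge(C+1)V^{\tau+\lambda-\mu}$, and it uses no assumption on the affine Ricci curvature. The only point demanding attention is the bookkeeping of the exponents of $V$ at each Cauchy--Schwarz step, so that the weights in the final inequalities come out exactly as $V^{\tau+\lambda-\mu}$ (on the $\phi$, $\rho\phi$, $\rho^{2}\phi$ factors and on $|\nabla^{D}\phi|$) and $V^{\tau+\mu-\lambda}$ (on $\Delta^{D}\phi$).
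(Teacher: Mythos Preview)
Your argument for \eqref{01Sec2thm10} is essentially the paper's: the paper computes $\Delta^{D}(\rho^{2})\ge(2C+2)V^{\lambda-\mu}$, multiplies by $\phi^{2}V^{\tau}$, integrates by parts, and finishes with Cauchy--Schwarz, which is your divergence-of-$\rho\nabla^{D}\rho$ step in equivalent form.

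For \eqref{02Sec2thm10} your route is genuinely different. The paper does \emph{not} pass through $\int|\nabla^{D}\phi|^{2}=-\int\phi\,\Delta^{D}\phi\,V^{\tau}$; instead it goes back to the identity $\int\phi^{2}V^{\tau}\Delta^{D}\rho^{2}=-4\int\rho\phi\langle\nabla\rho,\nabla\phi\rangle V^{\tau+\mu-\lambda}$, applies Cauchy--Schwarz with the splitting $\rho=\rho^{2}\cdot\rho^{-1}$ to produce the factors $\bigl(\int\rho^{4}\phi^{2}V^{\tau+\lambda-\mu}\bigr)^{1/2}$ and $\bigl(\int\rho^{-2}|\nabla^{D}\phi|^{2}V^{\tau+\lambda-\mu}\bigr)^{1/2}$, and then invokes the Rellich-type inequality \eqref{02Sec3thm7-2} with $\alpha=0$ to bound $\int\rho^{-2}|\nabla^{D}\phi|^{2}V^{\tau+\lambda-\mu}$ by $\frac{4}{(C+1)^{2}}\int(\Delta^{D}\phi)^{2}V^{\tau+\mu-\lambda}$. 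Your approach is shorter and more self-contained: two further Cauchy--Schwarz steps (on $\int\rho^{2}\phi^{2}$ and on $\int|\nabla^{D}\phi|^{2}=-\int\phi\,\Delta^{D}\phi\,V^{\tau}$) suffice, with no appeal to Theorem~\ref{thm7}. A side benefit is that your argument never uses $C>1$; it gives \eqref{02Sec2thm10} for every $C>0$, whereas the paper's route needs the hypotheses of Theorem~\ref{thm7}(ii). The paper's approach, on the other hand, isolates the intermediate Rellich estimate $\int(\Delta^{D}\phi)^{2}\ge\tfrac{(C+1)^{2}}{4}\int\rho^{-2}|\nabla^{D}\phi|^{2}$ as an ingredient of independent interest.
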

\begin{proof}
It follows from $|\nabla^{D}\rho|=1$ and $\Delta^{D}\rho\leq \frac{C}{\rho}V^{\lambda-\mu}$ that
\begin{equation}\label{03Sec2thm10}\aligned
\Delta^{D}\rho^{2}=&V^{\mu-\lambda}\left(\Delta\rho^{2}+(2\mu+n\lambda)\langle\nabla u,\nabla \rho^{2}\rangle\right)\\
=&V^{\mu-\lambda}\left(2|\nabla\rho|^{2}+2\rho\Delta \rho+(2\mu+n\lambda)2\rho\langle\nabla u,\nabla \rho\rangle\right)\\
=&2|\nabla^{D}\rho|^{2}V^{\lambda-\mu}+2\rho\Delta^{D}\rho\\
\geq&(2C+2)V^{\lambda-\mu}.
\endaligned\end{equation}
Multiplying both sides of \eqref{03Sec2thm10} by $\phi^{2}V^{\tau}$ and integrating over $M$ yields
\begin{equation}\label{04Sec2thm10}\aligned
\int_{M}\phi^{2}V^{\tau}\Delta^{D}\rho^{2} ~dv_{g}\geq(2C+2)\int_{M}\phi^{2}V^{\tau+\lambda-\mu} ~dv_{g}.
\endaligned\end{equation}
On the other hand, applying the integrating by parts and H\"{o}lder's inequality, we have
\begin{equation}\label{05Sec2thm10}\aligned
&\int_{M}\phi^{2}V^{\tau}\Delta^{D}\rho^{2} ~dv_{g}\\
=&-\int_{M}\langle\nabla\rho^{2},\nabla\phi^{2}\rangle V^{\tau+\mu-\lambda} ~dv_{g}\\
=&-\int_{M}4\rho\phi\langle\nabla\rho,\nabla\phi\rangle V^{\tau+\mu-\lambda} ~dv_{g}\\
\leq&4\int_{M}\rho|\phi||\nabla\rho||\nabla\phi| V^{\tau+\mu-\lambda} ~dv_{g}\\
\leq&4\left(\int_{M}|\rho\phi\nabla\rho|^{2}V^{\tau+\mu-\lambda} ~dv_{g}\right)^{\frac{1}{2}}\left(\int_{M}|\nabla\phi|^{2}V^{\tau+\mu-\lambda} ~dv_{g}\right)^{\frac{1}{2}}\\
=&4\left(\int_{M}\rho^{2}\phi^{2}|\nabla\rho|^{2}V^{\tau+\mu-\lambda} ~dv_{g}\right)^{\frac{1}{2}}\left(\int_{M}|\nabla\phi|^{2}V^{\tau+\mu-\lambda} ~dv_{g}\right)^{\frac{1}{2}}\\
=&4\left(\int_{M}\rho^{2}\phi^{2}V^{\tau+\lambda-\mu} ~dv_{g}\right)^{\frac{1}{2}}\left(\int_{M}|\nabla^{D}\phi|^{2}V^{\tau+\lambda-\mu} ~dv_{g}\right)^{\frac{1}{2}}.
\endaligned\end{equation}
Substituting \eqref{05Sec2thm10} into \eqref{04Sec2thm10}, we can obtain \eqref{01Sec2thm10} directly.

From \eqref{04Sec2thm10}, we have
\begin{equation}\label{01ad06Sec2thm10}\aligned
\left|\frac{1}{4}\int_{M}\phi^{2} V^{\tau}\Delta^{D}\rho^{2} ~dv_{g}\right|
\geq\frac{C+1}{2}\int_{M}\phi^{2}V^{\tau+\lambda-\mu} ~dv_{g}.
\endaligned\end{equation}
Using the integration by parts again, we can get
\begin{equation}\label{02ad06Sec2thm10}\aligned
&\left|\frac{1}{4}\int_{M}\phi^{2} V^{\tau}\Delta^{D}\rho^{2} ~dv_{g}\right|\\
=&\left|-\frac{1}{4}\int_{M}\langle\nabla^{D}\rho^{2},\nabla\phi^{2}\rangle V^{\tau} ~dv_{g}\right|\\
=&\left|-\frac{1}{4}\int_{M}\langle\nabla\rho^{2},\nabla\phi^{2}\rangle V^{\tau+\mu-\lambda}~dv_{g}\right|\\
=&\left|-\frac{1}{4}\int_{M}4\rho\phi\langle\nabla\rho,\nabla\phi\rangle V^{\tau+\mu-\lambda}~dv_{g}\right|\\
=&\int_{M}\rho|\phi||\nabla\rho||\nabla\phi| V^{\tau+\mu-\lambda}~dv_{g}\\
\leq&\left(\int_{M}\rho^{4}\phi^{2}|\nabla\rho|^{2}V^{\tau+\mu-\lambda} ~dv_{g}\right)^{\frac{1}{2}}\left(\int_{M}\frac{|\nabla\phi|^{2}}{\rho^{2}}V^{\tau+\mu-\lambda} ~dv_{g}\right)^{\frac{1}{2}},
\endaligned\end{equation}
where, we use H\"{o}lder's inequality in the last equation.
On the other hand, letting $\alpha=0$ in \eqref{02Sec3thm7-2}, we have
\begin{equation}\label{07Sec2thm10}\aligned
\int_{M}(\Delta^{D}\phi)^{2}V^{\tau+\mu-\lambda} ~dv_{g}
\geq\frac{(C+1)^{2}}{4}\int_{M}\frac{|\nabla^{D}\phi|^{2}}{\rho^{2}}V^{\tau+\lambda-\mu} ~dv_{g}.
\endaligned\end{equation}
Inserting \eqref{01ad06Sec2thm10} and \eqref{07Sec2thm10} into \eqref{02ad06Sec2thm10}, the inequality \eqref{02Sec2thm10} can be obtained naturally. This completes the proof of
Theorem \ref{thm10}.
\end{proof}

We now prove a weight $L^{p}$ form of the Hardy type inequality with respect to $D^{\lambda,\mu}$, where our new inequality contains more
remainder terms (or the improved Hardy type inequality).

\begin{theorem}\label{thm11}
Let $(M^{n}, g,V=e^{u})$ be an n-dimensional Riemannian triple and
$\lambda,\mu\in \mathbb{R}$. Let $D=D^{\lambda,\mu}$ be the affine connection defined as in \eqref{01adsec02} and $\tau=(n+1)\lambda+\mu$.
Let $\delta$ be a positive function, and $\rho$ and a be nonnegative functions on $M$ such that $|\nabla^{D}\rho|=1$, $\Delta^{D}\rho\geq\frac{C}{\rho}V^{\lambda-\mu}, \langle\nabla^{D} a, \nabla^{D}\rho\rangle\geq0$ and $-V^{\tau}\rm{div}^{D}\left(a\rho^{1-C}\nabla^{D}\delta\right)\geq0$ in the sense of distribution, where $C>1$ is a constant, $C+\alpha-1>0$ and $\alpha\in\mathbb{R}$. Then the following inequality
\begin{equation}\label{01Sec2thm11}\aligned
\int_{M}a\rho^{\alpha}|\nabla^{D}\phi|^{2}V^{\tau+\lambda-\mu} ~dv_{g} \geq&\left(\frac{C+\alpha-1}{2}\right)^{2}\int_{M}a\rho^{\alpha-2}\:\phi^{2}V^{\tau+\lambda-\mu} ~dv_{g} \\
&+\frac{C+\alpha-1}{2}\int_{M}\rho^{\alpha-1}\:\phi^{2} \langle\nabla^{D}a, \nabla^{D}\rho\rangle  V^{\tau+\lambda-\mu} ~dv_{g} \\
&+\frac{1}{4}\int_{M}a\rho^{\alpha}\frac{|\nabla^{D}\delta|^{2}}{\delta^{2}}\phi^{2}V^{\tau+\lambda-\mu} ~dv_{g}
\endaligned\end{equation}
holds for any $\phi\in C_0^{\infty}\big(M\setminus\rho^{-1}\{0\}\big)$.
\end{theorem}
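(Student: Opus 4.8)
The plan is to deduce the inequality from Theorem~\ref{thm4} applied with the weight $a(x)$ there replaced by $a\rho^{\alpha}$ and with the supersolution
$$\omega:=\rho^{-\gamma}\delta^{1/2},\qquad \gamma:=\frac{C+\alpha-1}{2}>0 .$$
Once this is set up, \eqref{01Sec2thm11} will follow by simply discarding the manifestly nonnegative remainder $\int_{M}a\rho^{\alpha}\omega^{2}\big|\nabla^{D}(\phi/\omega)\big|^{2}V^{\tau+\lambda-\mu}~dv_{g}\ge 0$ on the right of \eqref{01Sec2thm4}. So the entire proof reduces to verifying the differential inequality \eqref{ad0101Sec2thm4} in the present situation, namely
$$-V^{\tau}D_{i}\big(a\rho^{\alpha}\nabla^{D}_{i}\omega\big)\ \ge\ \Big[\gamma^{2}a\rho^{\alpha-2}+\gamma\rho^{\alpha-1}\langle\nabla^{D}a,\nabla^{D}\rho\rangle+\tfrac14 a\rho^{\alpha}\tfrac{|\nabla^{D}\delta|^{2}}{\delta^{2}}\Big]V^{2(\lambda-\mu)}\,\omega\, V^{\tau+\mu-\lambda},$$
whose right-hand side is $\ge 0$ because $\gamma>0$ and $\langle\nabla^{D}a,\nabla^{D}\rho\rangle\ge 0$; note that the three bracketed terms are precisely the three terms on the right of \eqref{01Sec2thm11}.

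To verify this I would first use Lemma~\ref{lem2-2} to write $-V^{\tau}D_{i}(a\rho^{\alpha}\nabla^{D}_{i}\omega)=-\mathrm{div}\big(V^{\tau+\mu-\lambda}a\rho^{\alpha}\nabla\omega\big)$, and then, from $\nabla\omega=-\gamma\rho^{-\gamma-1}\delta^{1/2}\nabla\rho+\rho^{-\gamma}\nabla(\delta^{1/2})$, split the expression into a ``$\rho$-part'' $\gamma\,\mathrm{div}\big(V^{\tau+\mu-\lambda}a\rho^{\alpha-\gamma-1}\delta^{1/2}\nabla\rho\big)$ and a ``$\delta$-part'' $-\mathrm{div}\big(V^{\tau+\mu-\lambda}a\rho^{\alpha-\gamma}\nabla(\delta^{1/2})\big)$. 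For the $\rho$-part, expanding the divergence and using $(\tau+\mu-\lambda)\langle\nabla u,\nabla\rho\rangle+\Delta\rho=V^{\lambda-\mu}\Delta^{D}\rho\ge\frac{C}{\rho}V^{2(\lambda-\mu)}$, $|\nabla\rho|^{2}=V^{2(\lambda-\mu)}$ and $\langle\nabla a,\nabla\rho\rangle=V^{2(\lambda-\mu)}\langle\nabla^{D}a,\nabla^{D}\rho\rangle\ge 0$, the two radial coefficients $C$ and $\alpha-\gamma-1$ combine via the identity $C+(\alpha-\gamma-1)=\gamma$ (forced by the choice of $\gamma$) into $\gamma^{2}a\rho^{\alpha-2}\,\omega\,V^{\tau+\lambda-\mu}$; this step simultaneously produces the term $\gamma\rho^{\alpha-1}\langle\nabla^{D}a,\nabla^{D}\rho\rangle\,\omega\,V^{\tau+\lambda-\mu}$ together with a leftover cross term proportional to $\rho^{\alpha-\gamma-1}\delta^{-1/2}\langle\nabla\rho,\nabla\delta\rangle$.

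For the $\delta$-part I would use the elementary identity $-\mathrm{div}\big(c\,\nabla(\delta^{1/2})\big)=\tfrac14 c\,\delta^{-3/2}|\nabla\delta|^{2}-\tfrac12\delta^{-1/2}\,\mathrm{div}(c\,\nabla\delta)$ with $c=V^{\tau+\mu-\lambda}a\rho^{\alpha-\gamma}=\rho^{\gamma}\cdot V^{\tau+\mu-\lambda}a\rho^{1-C}$ (the exponent bookkeeping forces $\alpha-\gamma=\gamma+(1-C)$, which is exactly why the hypothesis on $\delta$ is phrased with the weight $\rho^{1-C}$). Expanding $\mathrm{div}\big(\rho^{\gamma}\cdot V^{\tau+\mu-\lambda}a\rho^{1-C}\nabla\delta\big)$ and then invoking the hypothesis $-V^{\tau}\mathrm{div}^{D}(a\rho^{1-C}\nabla^{D}\delta)=-\mathrm{div}\big(V^{\tau+\mu-\lambda}a\rho^{1-C}\nabla\delta\big)\ge 0$ to drop a non-positive contribution, what survives is exactly $\tfrac14 a\rho^{\alpha}\frac{|\nabla^{D}\delta|^{2}}{\delta^{2}}\,\omega\,V^{\tau+\lambda-\mu}$ plus a second leftover cross term, again proportional to $\rho^{\alpha-\gamma-1}\delta^{-1/2}\langle\nabla\rho,\nabla\delta\rangle$. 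The concluding observation is that these two cross terms cancel identically — this is where the relation $\alpha-\gamma-1=\gamma-C$ (equivalently $\gamma=\frac{C+\alpha-1}{2}$) is used a second time — so the displayed differential inequality holds, Theorem~\ref{thm4} applies, and dropping its nonnegative remainder yields \eqref{01Sec2thm11}.

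I expect the only genuine labor to be the careful tracking of the powers of $V$ and of $\rho$ in these two divergence expansions and the check that the two $\langle\nabla\rho,\nabla\delta\rangle$-cross terms annihilate each other; there is no conceptual obstacle beyond that, because each geometric hypothesis ($|\nabla^{D}\rho|=1$, $\Delta^{D}\rho\ge\frac{C}{\rho}V^{\lambda-\mu}$, $\langle\nabla^{D}a,\nabla^{D}\rho\rangle\ge 0$, and $-V^{\tau}\mathrm{div}^{D}(a\rho^{1-C}\nabla^{D}\delta)\ge 0$) has been arranged precisely so as to feed directly into one of the three desired remainder terms. As a sanity check, taking $\delta\equiv 1$ and $a\equiv 1$ recovers \eqref{01Sec2cor1} of Corollary~\ref{cor01}, while taking $\delta\equiv 1$ alone recovers the two-weight inequality \eqref{01Sec2thm05} of Theorem~\ref{thm5} with $p=2$.
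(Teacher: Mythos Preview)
Your proof is correct and somewhat differently organized from the paper's. The paper argues directly by a two-step ground-state substitution: first set $\phi=\rho^{\beta}\psi$, expand $\int a\rho^{\alpha}|\nabla\phi|^{2}$, integrate by parts, and \emph{optimize} over $\beta$ to find $\beta_{0}=\frac{1-\alpha-C}{2}$ and the constant $\bigl(\frac{C+\alpha-1}{2}\bigr)^{2}$; then, in the leftover term $\int a\rho^{1-C}|\nabla\psi|^{2}$, set $\psi=\delta^{1/2}\varphi$, expand again, and use the hypothesis on $-V^{\tau}\mathrm{div}^{D}(a\rho^{1-C}\nabla^{D}\delta)$ to extract the $\frac{|\nabla^{D}\delta|^{2}}{\delta^{2}}$ remainder. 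You instead recognize that these two substitutions compose to a single one, $\phi=\omega\varphi$ with $\omega=\rho^{-\gamma}\delta^{1/2}$, and that this is precisely the framework of Theorem~\ref{thm4}; so your argument reduces everything to checking one pointwise differential inequality for $\omega$, with the cross-term cancellation you describe (which does hold, via $\alpha-\gamma-1=\gamma-C$). The advantages of your route are modularity (it reuses Theorem~\ref{thm4}) and the avoidance of the optimization step; the paper's route is more self-contained and makes it transparent why $\gamma=\frac{C+\alpha-1}{2}$ is the right exponent. One small correction to your closing sanity check: taking $\delta\equiv 1$ does \emph{not} recover Theorem~\ref{thm5}, since \eqref{01Sec2thm05} is a Hardy--Poincar\'{e} inequality involving $|\langle\nabla^{D}\rho,\nabla^{D}\phi\rangle|^{p}$, not $|\nabla^{D}\phi|^{p}$; rather, it gives the two-weight Hardy inequality with an extra $\langle\nabla^{D}a,\nabla^{D}\rho\rangle$ remainder.
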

\begin{proof}
Let $\phi=\rho^\beta\psi$, where $\psi\in C_0^\infty\big(M\setminus\rho^{-1}\{0\}\big)$ and $\beta<0$. Then we have
$$\aligned
\left|\nabla\phi\right|^{2}
=&\left|\beta\rho^{\beta-1}\psi\nabla\rho+\rho^\beta\nabla\psi\right|^{2}\\
=&\beta^{2}\rho^{2(\beta-1)}\psi^{2}|\nabla\rho|^{2}+\rho^{2\beta}|\nabla\psi|^{2}+2\beta\psi\:\rho^{2\beta-1}\langle\nabla\rho, \nabla\psi\rangle.
\endaligned$$
Multiplying the above inequality by $V^{\tau+\mu-\lambda}$ and using $|\nabla^{D}\rho|=1$, we obtain
\begin{equation}\label{02Sec2thm11}\aligned
V^{\tau+\mu-\lambda}\left|\nabla\phi\right|^{2}
=&\beta^{2}\rho^{2(\beta-1)}\psi^{2}V^{\tau+\lambda-\mu}+\rho^{2\beta}|\nabla\psi|^{2}V^{\tau+\mu-\lambda}\\
&+\beta\rho^{2\beta-1}\langle\nabla\rho, \nabla\psi^{2}\rangle V^{\tau+\mu-\lambda}.
\endaligned\end{equation}
Multiplying both sides of \eqref{02Sec2thm11} by $a(x)\rho^{\alpha}$ and applying the integration by parts over $M$ gives
\begin{equation}\label{03Sec2thm11}\aligned
&\int_{M}a\rho^{\alpha}|\nabla\phi|^{2}V^{\tau+\mu-\lambda} ~dv_{g}\\
=&\int_{M}a\beta^{2}\rho^{\alpha+2(\beta-1)}\psi^{2}V^{\tau+\lambda-\mu} ~dv_{g}
+\int_{M}a\rho^{\alpha+2\beta}|\nabla\psi|^{2}V^{\tau+\mu-\lambda} ~dv_{g}\\
&+\int_{M}a\beta\rho^{\alpha+2\beta-1}\langle\nabla\rho, \nabla\psi^{2}\rangle V^{\tau+\mu-\lambda} ~dv_{g}\\
=&\beta^{2}\int_{M}a\rho^{\alpha+2(\beta-1)}\:\psi^{2}V^{\tau+\lambda-\mu} ~dv_{g}
-\beta\int_{M}\mathrm{div}\left(a\rho^{\alpha+2\beta-1}\:V^{\tau+\mu-\lambda}\nabla\rho\right)\psi^{2} ~dv_{g}\\
&+\int_{M}a\rho^{\alpha+2\beta}|\nabla\psi|^{2}V^{\tau+\mu-\lambda} ~dv_{g}.
\endaligned\end{equation}
Since $|\nabla^{D}\rho|=1$ and $\Delta^{D}\rho\geq\frac{C}{\rho}V^{\lambda-\mu}$, we obtain the following inequality
\begin{equation}\label{04Sec2thm11}\aligned
&\mathrm{div}\left(a\rho^{\alpha+2\beta-1}\:V^{\tau+\mu-\lambda}\nabla\rho\right)\\
=&\rho^{\alpha+2\beta-1}V^{\tau+\mu-\lambda}\langle\nabla\rho, \nabla a\rangle+(\alpha+2\beta-1)a\rho^{\alpha+2\beta-2}|\nabla\rho|^{2}V^{\tau+\mu-\lambda}\\
&+a\rho^{\alpha+2\beta-1}V^{\tau+\mu-\lambda}\Delta\rho
+a(\tau+\mu-\lambda)\rho^{\alpha+2\beta-1}V^{\tau+\mu-\lambda}\langle\nabla\rho, \nabla u\rangle\\
=&\rho^{\alpha+2\beta-1}V^{\tau+\mu-\lambda}\langle\nabla\rho, \nabla a\rangle+(\alpha+2\beta-1)a\rho^{\alpha+2\beta-2}|\nabla^{D}\rho|^{2}V^{\tau+\lambda-\mu}\\
&+a\rho^{\alpha+2\beta-1}V^{\tau}\Delta^{D}\rho\\
\geq&(C+\alpha+2\beta-1)a\rho^{\alpha+2\beta-2}V^{\tau+\lambda-\mu}+\rho^{\alpha+2\beta-1}V^{\tau+\mu-\lambda}\langle\nabla\rho, \nabla a\rangle.
\endaligned\end{equation}
Multiplying both sides of \eqref{04Sec2thm11} by $\beta\psi^{2}$ and integrating over $M$ yields
\begin{equation}\label{05Sec2thm11}\aligned
-\beta\int_{M}\mathrm{div}\left(a\rho^{\alpha+2\beta-1}\:V^{\tau+\mu-\lambda}\nabla\rho\right)\psi^{2} ~dv_{g}
\geq&-\beta\left(C+\alpha+2\beta-1\right)\int_{M}a\rho^{\alpha+2\beta-2}\psi^{2}V^{\tau+\lambda-\mu} ~dv_{g} \\
&-\beta\int_{M}\rho^{\alpha+2\beta-1}\langle\nabla a,\nabla\rho\rangle V^{\tau+\mu-\lambda}\psi^{2} ~dv_{g}.
\endaligned\end{equation}
Hence combining the inequalities \eqref{03Sec2thm11} and \eqref{05Sec2thm11}, we obtain
\begin{equation}\label{06Sec2thm11}\aligned
&\int_{M}a\rho^{\alpha}|\nabla\phi|^{2}V^{\tau+\mu-\lambda} ~dv_{g}\\
\geq&\beta^{2}\int_{M}a\rho^{\alpha+2\beta-2}\psi^{2}V^{\tau+\lambda-\mu} ~dv_{g}-\beta(C+\alpha+2\beta-1)\int_{M}a\rho^{\alpha+2\beta -2}\psi^{2}V^{\tau+\lambda-\mu} ~dv_{g}\\
&-\beta\int_{M}\rho^{\alpha+2\beta-1}\langle\nabla a,\nabla\rho\rangle \psi^{2}V^{\tau+\mu-\lambda} ~dv_{g}
+\int_{M}a\rho^{\alpha+2\beta}|\nabla\psi|^{2}V^{\tau+\mu-\lambda} ~dv_{g}\\
=&f\left(\beta\right)\int_{M}a\rho^{\alpha+2\beta -2}\psi^{2}V^{\tau+\lambda-\mu} ~dv_{g}-\beta\int_{M}\rho^{\alpha+2\beta -1}\langle\nabla a,\nabla\rho\rangle \psi^{2}V^{\tau+\mu-\lambda} ~dv_{g}\\
&+\int_{M}a\rho^{\alpha+2\beta}|\nabla\psi|^{2}V^{\tau+\mu-\lambda} ~dv_{g},
\endaligned\end{equation}
where $f\left(\beta\right)=\beta^{2}-\beta\left(C+\alpha+2\beta-1\right).$
Since $f\left(\beta\right)$ attains the maximum for $\beta_{0}=\frac{1-\alpha-C}{2}$ and this maximum value is equal to $f\left(\beta_{0}\right)=\left(\frac{C+\alpha-1}{2}\right)^{2}$,
the inequality \eqref{06Sec2thm11} becomes
\begin{equation}\label{07Sec2thm11}\aligned
&\int_{M}a\rho^{\alpha}|\nabla\phi|^{2}V^{\tau+\mu-\lambda} ~dv_{g}\\ \geq&\left(\frac{C+\alpha-1}{2}\right)^{2}\int_{M}a\rho^{-C-1}\psi^{2}V^{\tau+\lambda-\mu} ~dv_{g}
+\int_{M}a\rho^{1-C}|\nabla\psi|^{2}V^{\tau+\mu-\lambda} ~dv_{g}\\
&+\frac{C+\alpha-1}{2}\int_{M}\rho^{-C}\psi^{2}\langle\nabla a,\nabla\rho\rangle V^{\tau+\mu-\lambda} ~dv_{g}.
\endaligned\end{equation}
We now focus on the second term on the right hand side of \eqref{07Sec2thm11}. Let us define a new variable $\varphi:=\delta^{-\frac{1}{2}}\psi$,
 where $\delta\in C^{\infty}_{0}(M\setminus\rho^{-1}\{0\})$ is a positive function. It is clear that
$$\aligned
|\nabla\psi|^{2}
=&\left|\frac{1}{2}\delta^{-\frac{1}{2}}\varphi\nabla\delta+\delta^{\frac{1}{2}}\nabla\varphi\right|^{2}\\
=&\frac{1}{4}\delta^{-1}\:|\nabla\delta|^{2}\varphi^{2}+\delta|\nabla\varphi|^{2}+\langle\nabla\delta,\nabla\varphi\rangle\varphi\\
\geq&\frac{1}{4}\delta^{-1}\:|\nabla\delta|^{2}\varphi^{2}+\frac{1}{2}\langle\nabla\delta,\nabla\varphi^{2}\rangle.
\endaligned$$
Therefore
\begin{equation}\label{08Sec2thm11}\aligned
\int_{M}a\rho^{1-C}|\nabla\psi|^{2}V^{\tau+\mu-\lambda} ~dv_{g} \geq&\frac{1}{4}\int_{M}a\rho^{1-C}\varphi^{2}\:\delta^{-1}\:|\nabla\delta|^{2}V^{\tau+\mu-\lambda} ~dv_{g} \\
&+\frac{1}{2}\int_{M}a\rho^{1-C}\langle\nabla\delta,\nabla\varphi^{2}\rangle V^{\tau+\mu-\lambda} ~dv_{g}.
\endaligned\end{equation}
Here, first applying the integration by parts to the second term on the right hand side of \eqref{08Sec2thm11} and then using the differential inequality $-V^{\tau}\rm{div}^{D}\left(a\rho^{1-C}\nabla^{D}\delta\right)\geq0$ , we get
\begin{equation}\label{09Sec2thm11}\aligned
&\int_{M}a\rho^{1-C}\langle\nabla\delta,\nabla\varphi^{2}\rangle V^{\tau+\mu-\lambda} ~dv_{g}\\
=&\int_{M} a\rho^{1-C}\langle\nabla^{D}\delta,\nabla\varphi^{2}\rangle V^{\tau} ~dv_{g}\\
=&-\int_{M}\varphi^{2}\rm{div}\left(V^{\tau}a\rho^{1-C}\nabla^{D}\delta\right) ~dv_{g}\\
=&-\int_{M}\varphi^{2}D_{i}\left(a\rho^{1-C}\nabla^{D}_{i}\delta\right)V^{\tau} ~dv_{g}\\
\geq&0.
\endaligned\end{equation}
Then combining \eqref{08Sec2thm11} and \eqref{09Sec2thm11}, we have
\begin{equation}\label{10Sec2thm11}\aligned
\int_{M}a\rho^{1-C}|\nabla\psi|^{2}V^{\tau+\mu-\lambda} ~dv_{g}\geq&\frac{1}{4}\int_{M}a\rho^{1-C}\varphi^{2}\:\delta^{-1}\:|\nabla\delta|^{2}V^{\tau+\mu-\lambda} ~dv_{g}.
\endaligned\end{equation}
Taking $\varphi=\delta^{-\frac{1}{2}}\rho^{\frac{C+\alpha-1}{2}}\phi$ into \eqref{10Sec2thm11}, then \eqref{10Sec2thm11} becomes
\begin{equation}\label{11Sec2thm11}\aligned
\int_{M}a\rho^{1-C}|\nabla\psi|^{2}V^{\tau+\mu-\lambda} ~dv_{g}\geq\frac{1}{4}\int_{M}a\rho^{\alpha}\frac{|\nabla\delta|^{2}}{\delta^{2}}\:\phi^{2}V^{\tau+\mu-\lambda} ~dv_{g}.
\endaligned\end{equation}
Finally, by using the equality $\psi=\rho^{-\beta}\phi=\rho^{\frac{C+\alpha-1}{2}}\phi$ and taking \eqref{11Sec2thm11} into \eqref{07Sec2thm11}, we get the desired inequality \eqref{01Sec2thm11}.
This completes the proof of
Theorem \ref{thm11}.
\end{proof}

\vskip 6mm
\noindent{\bf Acknowledgements}

\noindent
The research of authors is supported by NSFC (No.12101530), the Science and Technology
Project of Henan Province (No.232102310321), and the Key Scientific Research Program in
Universities of Henan Province (Nos.21A110021, 22A110021) and Nanhu Scholars Program
for Young Scholars of XYNU (No.2023).


\begin{thebibliography}{99}

\bibitem{Carron1997}
G. Carron,
{\it In\'{e}galit\'{e}s de Hardy sur les vari\'{e}t\'{e}s riemanniennes non-compactes}, J. Math. Pures Appl. (9) {\bf 76} (1997), 883-891.


\bibitem{Du2015}
F. Du and J. Mao,
{\it Hardy and Rellich type inequalities on metric measure spaces}, J. Math. Anal. Appl. {\bf 429} (2015), 354-365.


\bibitem{Flynn2023}
J. Flynn, L. Nguyen, G. Lu and S. Mazumdar,
{\it Hardy's identities and inequalities on Cartan-Hadamard manifolds}, J. Geom. Anal. {\bf 33} (2023), 34pp.

\bibitem{Grillo2003}
G. Grillo, {\it Hardy and Rellich-type inequalities for metrics defined by vector fields}, Potential Anal. {\bf18} (2003) 187-217.

\bibitem{Ghoussoub2011}
N. Ghoussoub and A. Moradifam,
{\it Bessel pairs and optimal Hardy and Hardy-Rellich inequalities}, Math. Ann. {\bf 349} (2011), 1-57.

\bibitem{HuangL2014}
G. Huang and H. Li, {\it Gradient estimates and entropy formulae of porous medium and fast diffusion equations for the Witten Laplacian},
Pacific J. Math. {\bf 268} (2014), 47-78.

\bibitem{HuangZh2023}
G. Huang and M. Zhu, {\it Some integral inequalities on weighted Riemannian manifolds with boundary},
Proc. Amer. Math. Soc. {\bf 151} (2023), 4961-4970.

\bibitem{HuangM2023}
G. Huang and M. Zhu, {\it Estimates for the first eigenvalues of the affine Laplacian},
Arch. Math. (Basel) {\bf 121} (2023), no. 1, 77-87.


\bibitem{Huangma2023}
G. Huang, B. Ma, and M. Zhu, {\it Colesanti type inequalities for affine connections}, Anal. Math. Phys. {\bf 13} (2023), 15pp.

\bibitem{Ye2023}
X. Huang and D. Ye,
{\it First order Hardy inequalities revisited}, Commun. Math. Res. {\bf 38} (2022), 535-559.

\bibitem{Jin2022}
Y. Jin and S. Shen,
{\it Some $L^{p}$-Hardy and $L^{p}$-Rellich type inequalities with remainder terms}, J. Aust. Math. Soc. {\bf 113} (2022), 79-98.

\bibitem{Kombe2010}
I. Kombe, {\it Sharp weighted Rellich and uncertainty principle inequalities on Carnot groups}, Commun. Appl. Anal. {\bf14} (2010), no.2, 251-271.


\bibitem{Kombe2009}
I. Kombe and M. \"{O}zaydin,
{\it Improved Hardy and Rellich inequalities on Riemannian manifolds}, Trans. Amer. Math. Soc. {\bf 361} (2009), 6191-6203.


\bibitem{Kombe2013}
I. Kombe and M. \"{O}zaydin,
{\it Hardy-Poincar\'{e}, Rellich and uncertainty principle inequalities on Riemannian manifolds}, Trans. Amer. Math. Soc. {\bf 365} (2013), 5035-5050.

\bibitem{Kombe2016}
I. Kombe and A. Yener, {\it Weighted Hardy and Rellich type inequalities on Riemannian manifolds}, Math. Nachr. {\bf 289} (2016), 994-1004.

\bibitem{Li2005}
X.-D. Li, {\it Liouville theorems for symmetric diffusion operators on complete Riemannian manifolds}, J. Math. Pures Appl. {\bf84} (2005), 1295-1361.

\bibitem{Li2022}
Y. Li, A. Abolarinwa, H. Alkhaldi and A. Ali, {\it Some inequalities of Hardy type related to Witten Laplace
operator on smooth metric measure spaces}, Mathematics {\bf10} (2022), 4580.

\bibitem{LiXia2017}
J. Li, and C. Xia, {\it An integral formula for affine connections}, J. Geom. Anal. {\bf 27} (2017), 2539-2556.

\bibitem{Meng2021}
C. Meng, H. Wang and W. Zhao, {\it Hardy type inequalities on closed manifolds via Ricci curvature}, Proc. R. Soc. Edinb. Sect. A Math. {\bf151} (2021), 993-1020.

\bibitem{Nguyen2020}
V. Nguyen, {\it  New sharp Hardy and Rellich type inequalities on Cartan-Hadamard manifolds and their improvements}, Proc. Roy. Soc. Edinburgh Sect. A {\bf150} (2020), no. 6, 2952-2981.


\bibitem{Vazquez2000}
J. Vazquez and E. Zuazua, {\it The Hardy inequality and the asymptotic behaviour of the heat equation with an inverse-square potential}, J. Funct. Anal. {\bf173} (2000), no. 1, 103-153.

\bibitem{Wangz2023}
P. Wang and F. Zeng, {\it Estimates for the first eigenvalue of diffusion-type operators in weighted manifolds}, J. Pseudo-Differ. Oper. Appl.  {\bf14} (2023), 18pp.

\bibitem{Xia2014}
C. Xia,
{\it Hardy and rellich type inequalities on complete manifolds}, J. Math. Anal. Appl. {\bf409} (2014), 84-90.


\end{thebibliography}
\end{document}